\documentclass[reqno, 12pt]{amsart}

\usepackage{amsmath}
\usepackage{amsfonts}
\usepackage{amssymb}
\usepackage{amsthm}
\usepackage{mathrsfs}
\usepackage{bbm}
\usepackage{graphicx, color}
\usepackage{tikz}
\usetikzlibrary{cd, intersections, calc, decorations.pathmorphing, arrows, decorations.pathreplacing}

\usepackage{mathtools}

\usepackage{enumerate}

\definecolor{refkey}{rgb}{0.9451,0.2706,0.4941}
\definecolor{labelkey}{rgb}{0.9451,0.2706,0.4941}

\definecolor{mygreen}{rgb}{0,0.7,0.3}
\definecolor{myblue}{rgb}{0,0.50,1.20}
\definecolor{myorange}{rgb}{1,0.38,0}

\usepackage{subfiles}
\usepackage[colorlinks, linkcolor=blue, citecolor=red, urlcolor=cyan, pagebackref]{hyperref}

\textwidth16cm \textheight22.8cm \headheight12pt
\oddsidemargin.4cm \evensidemargin.4cm \topmargin0cm

\numberwithin{equation}{section}

\usepackage{cleveref}
\crefname{thm}{Theorem}{Theorems}
\crefname{cor}{Corollary}{Corollaries}
\crefname{lem}{Lemma}{Lemmas}
\crefname{sublem}{Sublemma}{Sublemmas}
\crefname{prop}{Proposition}{Propositions}
\crefname{dfn}{Definition}{Definitions}
\crefname{defi}{Definition}{Definitions}
\crefname{ex}{Example}{Examples}
\crefname{claim}{Claim}{Claims}
\crefname{conj}{Conjecture}{Conjectures}
\crefname{conv}{Notation}{Notations}
\crefname{rem}{Remark}{Remarks}
\crefname{rmk}{Remark}{Remarks}
\crefname{prob}{Problem}{Problems}
\crefname{figure}{Figure}{Figures}
\crefname{table}{Table}{Tables}
\crefname{section}{Section}{Sections}
\crefname{subsection}{Section}{Sections}
\crefname{appendix}{Appendix}{Appendices}

\newtheorem{thm}{Theorem}[section]
\newtheorem{prop}[thm]{Proposition}
\newtheorem{cor}[thm]{Corollary}
\newtheorem{lem}[thm]{Lemma}

\theoremstyle{definition}
\newtheorem{dfn}[thm]{Definition}
\newtheorem{defi}[thm]{Definition}
\newtheorem{ex}[thm]{Example}

\newtheorem{conj}[thm]{Conjecture}
\newtheorem{conv}[thm]{Notation}
\theoremstyle{remark}
\newtheorem{rmk}[thm]{Remark}
\newtheorem{rem}[thm]{Remark}
\newtheorem{prob}[thm]{Problem}

\newcommand{\corrected}
{\textcolor{red}{(corrected)}}
\newcommand{\CORRECTED}
{}

\newcommand{\too}{\longrightarrow}

\newcommand{\Too}{\Longrightarrow}

\newcommand{\oot}{\longleftarrow}

\newcommand*{\chom}{\mathcal{H}\kern -.5pt om}

\newcommand{\bZ}{\mathbb{Z}}
\newcommand{\bQ}{\mathbb{Q}}
\newcommand{\bR}{\mathbb{R}}
\newcommand{\bC}{\mathbb{C}}

\newcommand{\bT}{\mathbb{T}}

\newcommand{\bP}{\mathbb{P}}

\newcommand{\bA}{\mathbb{A}}
\newcommand{\bE}{\mathbb{E}}
\newcommand{\bH}{\mathbb{H}}

\newcommand{\bs}{{\mathsf{s}}}

\newcommand{\spe}{|M_\partial|}

\newcommand{\ve}{b}
\newcommand{\bdX}{\boldsymbol{X}}

\newcommand{\bdA}{\boldsymbol{A}}

\newcommand{\A}{\mathcal{A}}
\newcommand{\cA}{\mathcal{A}}

\newcommand{\cC}{\mathcal{C}}

\newcommand{\cE}{\mathcal{E}}
\newcommand{\cF}{\mathcal{F}}

\newcommand{\cN}{\mathcal{N}}

\newcommand{\cR}{\mathcal{R}}

\newcommand{\cU}{\mathcal{U}}

\newcommand{\X}{\mathcal{X}}
\newcommand{\cX}{\mathcal{X}}


\newcommand{\fF}{\mathfrak{F}}
\newcommand{\fS}{\mathfrak{S}}

\newcommand{\sfa}{\mathsf{a}}
\newcommand{\sfx}{\mathsf{x}}

\newcommand{\tri}{\triangle}
\newcommand{\sgn}{\mathrm{sgn}}
\newcommand{\trop}{\mathrm{trop}}
\newcommand{\pos}{\mathbb{R}_{>0}}
\newcommand{\stab}{\mathrm{stab}}

\newcommand{\ML}{\mathcal{ML}}

\newcommand{\MF}{\mathcal{MF}}

\newcommand{\dML}{\widetilde{\mathcal{ML}}}
\newcommand{\eML}{\widehat{\mathcal{ML}}}
\newcommand{\dMF}{\widetilde{\mathcal{MF}}}

\newcommand{\Tri}{\mathrm{Tri}}
\newcommand{\bTri}{\bT \mathrm{ri}}
\newcommand{\tr}{\mathsf{T}}

\newcommand{\bExch}{\bE \mathrm{xch}}
\newcommand{\uf}{\mathrm{uf}}
\newcommand{\f}{\mathrm{f}}

\newcommand{\Teich}{Teichm\"uller}

\newcommand{\hL}{\widehat{L}}

\newcommand{\bep}{\boldsymbol{\epsilon}}
\newcommand{\bsfx}{\boldsymbol{\sfx}}
\newcommand{\bsfa}{\boldsymbol{\sfa}}

\DeclareMathOperator{\im}{\mathrm{im}}

\DeclareMathOperator{\interior}{\mathrm{int}}

\newcommand{\oline}[1]{\overline{#1}}

\makeatletter
\newcommand{\oset}[3][0ex]{%
  \mathrel{\mathop{#3}\limits^{
    \vbox to#1{\kern-2\ex@
    \hbox{$\scriptstyle#2$}\vss}}}}
\makeatother
\newcommand{\overbar}[1]{\oset{#1}{-\!\!\!-\!\!\!-}}

\makeatletter
\makeatother

\makeatletter
\newcommand{\osetnear}[3][0ex]{%
  \mathrel{\mathop{#3}\limits^{
    \vbox to#1{\kern-.3\ex@
    \hbox{$\scriptstyle#2$}\vss}}}}
\makeatother
\newcommand{\overbarnear}[1]{\osetnear{#1}{-\!\!\!-\!\!\!-}}

 
\newcommand\qarrow[2]{\draw[->,shorten >=4pt,shorten <=4pt] (#1) -- (#2) [thick];} 


\tikzset{
  mid arrow/.style={postaction={decorate,decoration={
        markings,
        mark=at position .5 with {\arrow[#1]{stealth}}
      }}},
}

\tikzset{
  symbol/.style={
    draw=none,
    every to/.append style={
      edge node={node [sloped, allow upside down, auto=false]{$#1$}}}
  }
}

\tikzset{
    squigarrow/.style={-{Classical TikZ Rightarrow[length=4pt]}, decorate, decoration={snake, amplitude=1.8pt, pre length=2pt, post length=3pt}}
}


\title[A characterization of pA via tropical cluster transformations]{A characterization of pseudo-Anosov mapping classes via tropical cluster transformations}

\author[Tsukasa Ishibashi]{Tsukasa Ishibashi}
\address{Tsukasa Ishibashi, Mathematical Institute, Tohoku University, 
6-3 Aoba, Aramaki, Aoba-ku, Sendai, Miyagi 980-8578, Japan.}
\email{tsukasa.ishibashi.a6@tohoku.ac.jp}

\author[Shunsuke Kano]{Shunsuke Kano}
\address{Shunsuke Kano, Research Alliance Center for Mathematical Sciences, Tohoku University
6-3 Aoba, Aramaki, Aoba-ku, Sendai, Miyagi 980-8578, Japan.}
\email{s.kano@tohoku.ac.jp}

\date{\today}

\begin{document}

\begin{abstract}
We give a characterization of generic pseudo-Anosov mapping classes purely in terms of their expressions in the shear coordinates, thus giving an answer to a problem raised by Papadopoulos--Penner \cite{PP93}. 
This characterization has a cluster algebraic generalization called the \emph{sign stability} introduced in \cite{IK19}. 
By combining with the results in \cite{IK19}, we see that the algebraic entropies of the cluster $\A$- and $\X$-transformations induced by a generic pseudo-Anosov mapping class both coincide with its topological entropy.
\end{abstract}

\maketitle

\setcounter{tocdepth}{1} 
\tableofcontents

\section{Introduction}



The \emph{mapping class group} of an oriented closed surface $\Sigma$ is the group of isotopy classes of orientation-preserving diffeomorphisms on $\Sigma$. 
It naturally acts on the \Teich\ space $T(\Sigma)$ of $\Sigma$ properly discontinuously, and the quotient orbifold is the celebrated moduli space of Riemann surfaces. 
This action extends continuously to the Thurston compactification of the \Teich\ space, which is obtained by attaching the space of \emph{projective measured geodesic laminations} at infinity. 
In terms of certain fixed point properties of this action, each mapping class is classified into three types: periodic, reducible, and pseudo-Anosov \cite{Th88}. 
This result is called the \emph{Nielsen--Thurston classification}, which gives a beautiful generalization of the elliptic-parabolic-hyperbolic trichotomy for elements of the modular group $PSL_2(\bZ)$. 

For a punctured surface $\Sigma$ (\emph{i.e.}, a closed surface equipped with a finite set of marked points), the \emph{decorated \Teich\ theory} \cite{Pen87} provides a combinatorial tool for study of the \Teich--Thurston theory mentioned above. More precisely, there are two variants $\widetilde{T}(\Sigma)$ and $\widehat{T}(\Sigma)$ of the \Teich\ space called the \emph{decorated} and \emph{enhanced \Teich\ spaces}, repsectively. These spaces have distinguished charts associated with ideal triangulations, and the action of the mapping class group has a rational expression in terms of these coordinate systems. A similar construction applies for the spaces $\dML(F)$ and $\eML(F)$ of 
\emph{decorated} and \emph{enhanced measured geodesic laminations} (or equivalently, \emph{measured foliations}) with respect to a hyperbolic structure $F$ on $\Sigma$, which have piecewise-linear (PL for short) coordinate systems associated with ideal triangulations \cite{PP93,BKS}. The actions of the mapping class group on these spaces of laminations are expressed as PL maps in these coordinates, which are the \emph{tropical analogues} of the expressions of the actions on the corresponding \Teich\ spaces. 
The relations between the spaces mentioned above are summarized in the following diagram:
\[
\begin{tikzcd}
\widetilde{T}(\Sigma) \ar[d, twoheadrightarrow]\\
T(\Sigma) \ar[r, hook] & \widehat{T}(\Sigma),
\end{tikzcd}
\qquad
\begin{tikzcd}
\widetilde{\ML}(F) \ar[d, twoheadrightarrow]\\
\ML_0(F) \ar[r, hook] & \widehat{\ML}(F).
\end{tikzcd}
\]
where $\ML_0(F)$ denotes the space of measured geodesic laminations with compact support. 
In the last section in their paper \cite{PP93}, Papadopoulos and Penner raised the following problem:

\begin{prob}[Papadopoulos--Penner \cite{PP93}]\label{prob:PP question}
Characterize the Nielsen--Thurston type of a mapping class in terms of its rational or PL coordinate expressions.
\end{prob}

\subsection{Pseudo-Anosov property and the sign stability}
While the original problem of Papadopoulos--Penner arose in the setting of the PL coordinate system on $\dML(F)$ (the \emph{tropical $\lambda$-length coordinates}), we attack the problem by using the PL coordinate system on $\eML(F)$ (the \emph{shear coordinates}). Specifically, we focus on the following structures on $\eML(F)$. 

For each labeled ideal triangulation $(\tri,\ell)$ of a punctured surface $\Sigma$, which is an ideal triangulation $\tri$ of $\Sigma$ together with a bijection $\ell: I \xrightarrow{\sim} \tri$, we have a PL coordinate system
\begin{align*}
    \bsfx^{(\tri,\ell)}=(\sfx_i^{(\tri,\ell)})_i: \eML(F) \xrightarrow{\sim} \bR^{I} 
\end{align*}
formed by the shear coordinates (\cref{thm:lam_shear}). In this coordinate system, the natural action of a mapping class $\phi \in MC(\Sigma)$ on $\eML(F)$ has a coordinate expression
\begin{align*}
    \phi^{(\tri,\ell)}:= \bsfx^{(\tri,\ell)} \circ \phi \circ (\bsfx^{(\tri,\ell)})^{-1}: \bR^{I} \to \bR^{I},
\end{align*}
which is a PL map. We can study the \emph{domains of linearity} of this PL map (namely, the maximal domains on which the action is linear) as follows. Take a sequence $\gamma: (\tri,\ell) \to \phi^{-1}(\tri,\ell)$ of flips and permutations of labels, which we call a \emph{representation path} of $\phi$ (\cref{d:rep_path}). It gives a factorization of the PL map $\phi^{(\tri,\ell)}$ into certain elementary transformations (called the \emph{tropical cluster $\X$-transformations} \eqref{eq:sign x-cluster}), which tells us that the PL map $\phi^{(\tri,\ell)}$ fails to be linear at the points where one of the shear coordinates associated with the flipped edges in $\gamma$ vanishes. In other words, the choice of $\gamma$ gives a decomposition
\begin{align*}
    \eML(F) = \bigcup_{\bep
    \in \{+,-\}^h} \cC_\gamma^{\bep},
\end{align*}
where $h$ is the number of flips in $\gamma$, and $\cC_\gamma^{\bep}$ is the cone obtained as the closure of the points $L \in \eML(F)$ whose shear coordinate of the $i$-th flipped edge in $\gamma$ has the sign $\epsilon_i$ for $i=0,\dots,h-1$ (see \eqref{eq:sign_cone}). The restriction of $\phi^{(\tri,\ell)}$ to each cone $\cC_\gamma^{\bep}$ is a linear map. 
In view of these structures, we make the following definition:

\begin{dfn}[\cref{d:sign stability} and \cref{d:uniform stability}]\label{introdef:sign_stability}
Fix an $\bR_{>0}$-invariant subset $\Omega \subset \eML(F)$. 
\begin{enumerate}
    \item We say that a sequence $\gamma: (\tri,\ell) \to \phi^{-1}(\tri,\ell)$ is \emph{sign-stable} on $\Omega$ if there exists a sign $\bep_\gamma^{\stab}$ such that every $\phi$-orbit of points in $\Omega$ eventually enters the cone $\cC_\gamma^{\bep_\gamma^{\stab}}$. 
    \item We say that a mapping class $\phi \in MC(\Sigma)$ is \emph{uniformly sign-stable} if every representation path $\gamma: (\tri,\ell) \to \phi^{-1}(\tri,\ell)$ is sign-stable on $\Omega=\eML_\bQ(F)$. 
\end{enumerate}
\end{dfn}
The condition (1) implies that the presentation matrices\footnote{Here, the presentation matrix of $\phi^{(\tri,\ell)}$ at a differentiable point $L$ means the presentation matrix of the tangent map $(d\phi^{(\tri,\ell)})_L$ with respect to the shear coordinates.}
of the action $\phi^{(\tri,\ell)}$ along each orbit `stabilizes' to a common one, denoted by $E_{\phi,\Omega}^{(\tri,\ell)}$. If the domain is chosen to be a `canonical' one $\Omega=\Omega_\tri^{\mathrm{can}} \subset \eML_\bQ(F)$, the spectral radius $\lambda_\phi^\tri \geq 1$ of $E_{\phi,\Omega}^{(\tri,\ell)}$ is called the \emph{cluster stretch factor}. 

Observe that the concepts of the sign stability and the cluster stretch factor are formulated purely in terms of the shear coordinates. The condition (1) mimics the convergence of \emph{RLS sequences} of trian track splittings for pseudo-Anosov mapping classes \cite{PP87}, and the definition of the cluster stretch factor resembles the description of the pseudo-Anosov stretch factor as the spectral radius of the associated \emph{transition matrix} \cite{PH,BH95}.
The following is our main theorem:

\begin{thm}[\cref{t:pA_SS_NS}]\label{introthm:pA_SS_NS}
Let $\Sigma$ be a punctured surface.
For a mapping class $\phi \in MC(\Sigma)$, the following conditions are equivalent:
\begin{enumerate}
    \item $\phi$ is generic pseudo-Anosov.
    \item $\phi$ is uniformly sign-stable.
    \item $\phi$ has North-South dynamics on $\bP \eML(F)$, and the attracting and repelling points are $\cX$-filling.
\end{enumerate}
In this case, the cluster stretch factor $\lambda_\phi^\tri$ coincides with the pseudo-Anosov stretch factor of $\phi$ for any ideal triangulation $\tri$.
\end{thm}
In particular, the equivalence $(1) \Longleftrightarrow (2)$ gives a characterization of the generic pseudo-Anosov mapping classes purely in terms of the shear coordinates, thus giving an answer to \cref{prob:PP question}. The condition (3) also gives a dynamical characterization. 

\begin{rem}
It turns out that the domains of linearity of the PL action on $\dML(F)$ is also characterized by the signs of shear coordinates under the projection $\dML(F) \to \eML(F)$. Moreover, an analogue of $(1) \Longleftrightarrow (2)$ for $\dML(F)$ does not hold. 
This is the reason why we choose to use the shear coordinates rather than the tropical $\lambda$-lengths. 
Non-generic pseudo-Anosov mapping classes are discussed in \cite[Part II]{Kan23}. 
\end{rem}

\subsubsection*{Dehn twists}
We also study the sign stability of Dehn twists, which are of reducible class. We see that a Dehn twist satisfying a mild condition admits a representation path that is sign-stable on a certain domain $\Omega$, and its cluster stretch factor is $1$ (\cref{lem:Dehn_SS}).
On the other hand, it also has a representation path which is not sign-stable on the same domain (\cref{ex:Dehn_SS}). This gives a counterexample to the naive conjecture \cite[Conjecture 1.3]{IK19} which asserted the independence of sign stability on representation paths.

\subsection{Sign stability for mutation loops on cluster varieties}
The concept of sign stability has been introduced in \cite{IK19} to generalize the pseudo-Anosov property of mapping classes for \emph{mutation loops} on cluster varieties \cite{FG09}. Our \cref{introthm:pA_SS_NS} confirms this principle. Let us explain the correspondence. 

The coordinate structures of the \Teich\ and lamination spaces mentioned above fit in with the language of \emph{cluster algebra/variety}, which is introduced by Fomin--Zelevinsky \cite{FZ-CA1} and Fock--Goncharov \cite{FG03}. In terms of the cluster variety, there exists a seed pattern $\bs_\Sigma$ associated with a punctured surface $\Sigma$ formed by the ideal (or tagged) triangulations, and the \Teich\ and lamination spaces are uniformly reconstructed from the associated \emph{cluster varieties} $\A_\Sigma:=\A_{\bs_\Sigma}$ and $\X_\Sigma:=\X_{\bs_\Sigma}$, as follows \cite{FG07,FST}:
\begin{align}
    \begin{aligned}\label{eq:TT_cluster}
    \A_\Sigma(\pos) &\cong \widetilde{T}(\Sigma),& \X_\Sigma(\pos) &\cong \widehat{T}(\Sigma), \\
    \A_\Sigma(\bR^\trop) &\cong \dML(\Sigma),& \X_\Sigma(\bR^\trop) &\cong \eML(\Sigma).
    \end{aligned}
\end{align}
Here the \emph{positive structures} of the cluster varieties allow us to consider their semifield-valued points, and we have used the semifields $\pos=(\pos,+,\cdot)$ and $\bR^\trop=(\bR,\min,+)$. 
The natural actions of the mapping class group $MC(\Sigma)$ on these spaces coincides with the one through an embedding $MC(\Sigma) \to \Gamma_{\bs_\Sigma}$ into the \emph{cluster modular group}, which is a group consisting of \emph{mutation loops} and acting on the cluster varieties. 

The concepts of sign stability given in \cref{introdef:sign_stability} make sense for a general mutation loop, where we replace $\eML(F)$ with $\X_\bs(\bR^\trop)$ for a general seed pattern $\bs$, and $\gamma$ with a sequence of mutations and permutations that represents a mutation loop $\phi \in \Gamma_\bs$. Examples of sign-stable mutation loops that do not arise from the surface topology are studied in \cite{IK19}. We also prove a slight generalization of \cref{introthm:pA_SS_NS} for a general element of $\Gamma_{\bs_\Sigma}$ in \cref{sec:Appendix}.

\subsubsection*{Relation to a prior work \cite{Ish19}}
Based on the correspondence \eqref{eq:TT_cluster}, the first-named author studied an analogue of the Nielsen--Thurston classfication for cluster modular groups in \cite{Ish19} by introducing three types for mutation loops: periodic, cluster-reducible, cluster-pseudo-Anosov. The basic idea is to distinguish them according to certain fixed point properties of their actions on the \emph{tropical compactification} \cite{FG16,Le16} of the spaces $\A_\bs(\pos)$ and $\X_\bs(\pos)$, similarly to the original Nielsen--Thurston's spirit. 
However, there was a fault that the cluster-pseudo-Anosov property is weaker than the original pseudo-Anosov property (\cite[Example 3.13]{Ish19}). Moreover, an analogue of the stretch factor was also missing. Our concept of sign stability improves these points. We give a comparison between the uniform sign stability and the cluster-pseudo-Anosov property in \cref{prop:uniform_SS_cluster_pA} in a from applicable to general mutation loops.

\subsection{The topological and algebraic entropies}
As an application of our study of sign stability, we compute the \emph{algebraic entropy} \cite{BV99} $\cE^a_\phi$ (resp. $\cE^x_\phi$) of the action of a mapping class $\phi$ on the cluster variety $\A_\Sigma$ (resp. $\X_\Sigma$), based on the main result of \cite{IK19}. Moreover, we compare them with the \emph{topological entropy} \cite{AKM65} $\cE^{\mathrm{top}}_\phi$.

In fact, the existence of at least one sign-stable representation path on $\Omega_\tri^{\mathrm{can}}$ is enough to compute the algebraic entropies, as demonstrated in \cite{IK19}. Therefore, we can also compute the algebraic entropy of a Dehn twist as well as pseudo-Anosov mapping clasees. 


\begin{thm}[\cref{cor:cluster_Dehn_entropy,cor:pA_entropy}]\label{introthm:entropy_1}
We have the following comparison results:
\begin{enumerate}
    \item For a simple closed curve $C$ on a marked surface $\Sigma$ satisfying the condition \eqref{eq:condition_Dehn}, we have
    \[ \cE^a_{T_C} = \cE^x_{T_C} = \cE^{\mathrm{top}}_{T_C} = 0. \]
    \item For a punctured surface $\Sigma$ and a generic pseudo-Anosov mapping class $\phi \in MC(\Sigma)$, we have
     \[ \cE^a_\phi = \cE^x_\phi = \cE^{\mathrm{top}}_{\phi} = \log\lambda_\phi. \]
     Here $\lambda_\phi >1$ is the cluster stretch factor (=pseudo-Anosov stretch factor) of $\phi$. 
\end{enumerate}
\end{thm}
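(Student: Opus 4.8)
The plan is to prove both statements by assembling the algebraic-entropy computations already recorded in \cref{cor:cluster_Dehn_entropy,cor:pA_entropy} with the classical values of the topological entropy supplied by Nielsen--Thurston theory, and then to match the two numerical invariants using the identification of the cluster stretch factor with the stretch factor furnished by \cref{t:pA_SS_NS}.

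For the algebraic entropies I would argue as follows. In case (1), \cref{lem:Dehn_SS} provides, for a simple closed curve $C$ obeying \eqref{eq:condition_Dehn}, a representation path of the Dehn twist $t_C$ that is sign-stable on a suitable $\pos$-invariant set with cluster stretch factor equal to $1$. Feeding this into the entropy formula of \cite{IK19}, which computes the algebraic entropy of a sign-stable mutation loop as the logarithm of its cluster stretch factor, yields $\cE^a_{t_C}=\cE^x_{t_C}=\log 1 = 0$; this is precisely the content of \cref{cor:cluster_Dehn_entropy}. In case (2), \cref{t:pA_SS_NS} shows that a pA mutation loop $\phi$ is uniformly sign-stable with cluster stretch factor $\lambda_\phi$, and the same entropy formula gives $\cE^a_\phi=\cE^x_\phi=\log\lambda_\phi$, as in \cref{cor:pA_entropy}.

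It then remains to compute the topological entropies and to compare them with the above. A Dehn twist $t_C$ is a reducible mapping class: up to isotopy it is supported in an annular neighborhood of $C$, so a representative homeomorphism is the identity outside this annulus. Since the topological entropy (as in \cite{AKM65}) is an isotopy invariant that vanishes for such maps, we obtain $\cE^{\mathrm{top}}_{t_C}=0$, matching the algebraic side. For a pA mapping class $\phi$ I would invoke the classical theorem, going back to Thurston \cite{Th88}, that $\cE^{\mathrm{top}}_\phi=\log\lambda$, where $\lambda$ is the stretch factor of $\phi$. Finally, the second assertion of \cref{t:pA_SS_NS} gives $\lambda_\phi=\lambda$, so all four quantities in (2) coincide.

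I expect the genuine difficulty to lie not in this assembly but in the inputs already established: the equivalence between uniform sign stability and the pA property together with the equality $\lambda_\phi=\lambda$ of \cref{t:pA_SS_NS}, and the sign-stability--to--algebraic-entropy dictionary of \cite{IK19}. Within the present argument the only points demanding care are (i) checking that the entropy formula of \cite{IK19} applies verbatim on the specific $\pos$-invariant domain produced by \cref{lem:Dehn_SS} in the Dehn-twist case, where sign stability is established only for a particular representation path rather than uniformly, and (ii) reconciling the normalization conventions between the algebraically defined entropy (the growth rate of the degrees of the iterated birational cluster transformation) and the dynamically defined topological entropy, so that the common value is genuinely $\log\lambda$ rather than a scalar multiple thereof.
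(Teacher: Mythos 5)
Your overall assembly coincides with the paper's: \cref{cor:cluster_Dehn_entropy,cor:pA_entropy} are obtained by feeding the sign-stability inputs (\cref{lem:Dehn_SS}, \cref{t:pA_SS_NS}) into the entropy formula of \cite{IK19}, and the topological side comes from Thurston's theorem for pA classes together with the vanishing of entropy for Dehn twists. But two of the points you defer are genuine gaps rather than routine checks. First, \cref{thm:alg_entropy} is \emph{conditional}: it requires the spectral duality statement \cref{p:spec_same} to hold for the stable sign of the chosen path. The paper discharges this hypothesis by proving \cref{prop:spec_same_surf}, which verifies the conjecture at differentiable points of $\cU_\Sigma(\bR^\trop)$ via the block decomposition of the tangent maps coming from the $\Gamma_\Sigma$-equivariant splittings; this applies precisely because the stable signs in both cases are realized at points of $\cU_\Sigma(\bR^\trop)$ (namely $L_{T_C}$ and $L_\phi^+$). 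Your proposal never mentions this hypothesis, so your appeal to \cite{IK19} is incomplete as written. Second, your worry (i) about the domain is benign, since $\Omega^{(1,2)}_{(\tri_C,\ell_C)} \supset \Omega^{\mathrm{can}}_{(\tri_C,\ell_C)}$, but the real obstruction in case (1) is elsewhere: \cref{thm:alg_entropy} is stated for seed patterns without frozen indices, whereas a general marked surface may have boundary, so $\bs_\Sigma$ has frozen indices and \cref{prop:spec_same_surf} (proved for punctured surfaces) is unavailable. The paper instead verifies directly that the upper bound $\log R_{T_C}$ of \cite[Theorem 1.1]{IK19} vanishes, using the explicit block-triangular form \eqref{eq:E_matrix_Dehn_twist} of the stable presentation matrix. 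You flag neither issue nor a resolution.

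On the topological side, your justification of $\cE^{\mathrm{top}}_{T_C}=0$ is flawed: topological entropy is a conjugacy invariant but emphatically \emph{not} an isotopy invariant (which is exactly why $\cE^{\mathrm{top}}_\phi$ is defined as an infimum over representatives), and a homeomorphism supported in an annulus does not have zero entropy merely by virtue of its support being an annulus. What you actually need, and all you need, is a single representative with zero entropy. The paper produces the standard shear representative $T(\theta,t)=(\theta+b(t),t)$ and bounds its entropy by Ito's derivative estimate (\cref{thm:top_ent_above}, used in \cref{thm:top_ent_Dehn_0}); some argument of this kind is required and cannot be replaced by the isotopy-invariance claim. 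The remaining ingredients, namely $\cE^{\mathrm{top}}_\phi=\log\lambda$ for pA classes and the identification $\lambda_\phi=\lambda$ from \cref{t:pA_SS_NS}, are used exactly as in the paper, and your normalization worry (ii) is not an issue: both entropies are natural logarithms of growth rates, and the two corollaries equate them to the same $\log\lambda_\phi$.
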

Thus we get a complete agreement of the two kinds of entropies associated with these mapping classes. 
Since the cluster varieties $\A_\Sigma$ and $\X_\Sigma$ are birationally isomorphic to certain extensions of the $PSL_2(\bC)$-character variety \cite{FG03}, this result also computes the algebraic entropies of the natural actions on these spaces. 
It should be compared with the work of Hadari \cite{Hadari}, where the algebraic entropies of the actions of a mapping class on various character varieties are studied.

\subsection*{Organization of the paper}

In \cref{sec:lamination}, we recall the notion of measured geodesic laminations and their shear coordinates, and mapping class group action on them.
We also recall the Nielsen--Thurston classification for mapping classes. 

In \cref{sec:sign_stability}, after reviewing the combinatorial description of the mapping class group action, we introduce basic concepts around the sign stability. It basically follows \cite{IK19} but includes the treatment of permutations of labels. In \cref{subsec:cluster-pA}, we compare the uniform sign stability with the cluster-pseudo-Anosov property. 
%
%
%
We study the sign stability of Dehn twists in \cref{sec:Dehn twist}. This is partially intended to be a mild introduction to the sign stability. 

In \cref{sec:pA_SS_NS}, we investigate the uniform sign stability of pseudo-Anosov mapping classes on a punctured surface and prove the main theorem (\cref{introthm:pA_SS_NS}). 
%
%
In \cref{subsec:entropy}, we compute the algebraic entropy of mutation loops as an application of previous sections and \cite{IK19}, giving a proof of \cref{introthm:entropy_1}.

In \cref{sec:Appendix}, we study the sign stability of the signed mapping classes, which are the general elements of the cluster modular group $\Gamma_\Sigma$ associated with the seed pattern $\bs_\Sigma$.
The conclusion is that the statements parallel to the original case remain valid in this generalized setting.


\smallskip

\subsection*{Acknowledgements}
The authors are grateful to Hidetoshi Masai for informing us of the modern results on the North-South dynamics on the lamination spaces.
They also wish to thank Takeru Asaka for pointing out an error in the proof of \cref{p:shear_non-vanishing} (2) in the first version.
T. I. would like to express his gratitude to his former supervisor Nariya Kawazumi for his continuous guidance and encouragement in the earlier stage of this work. 
S. K. would like to thank Yoshihiko Mitsumatsu. The strategy of the proof of \cref{lem:asympt to U} is inspired by him. 
S. K. is also deeply grateful to his supervisor Yuji Terashima for his advice and giving him a lot of knowledge.

T. I. has been supported by JSPS KAKENHI Grant Number 18J13304, JP20K22304 and JP24K16914, and the Program for Leading Graduate Schools, MEXT, Japan.


\section{Measured geodesic laminations on a marked surface}\label{sec:lamination}
In this section, we recall basic concepts around measured geodesic laminations on a hyperbolic surface with geodesic boundary, following \cite{BB}.
The space $\eML(F)$ of measured geodesic laminations on a hyperbolic surface $F$ with suitable boundary conditions will be identified with the tropical cluster variety $\cX_\Sigma(\bR^\trop)$ associated with the corresponding marked surface $\Sigma$.


A marked surface $\Sigma$ is a compact oriented surface with a fixed non-empty finite set $M$ of \emph{marked points} on it. A marked point is called a \emph{puncture} if it lies in the interior of $\Sigma$, and a \emph{special point} otherwise. 
Let $P=P(\Sigma)$ (resp. $M_\partial=M_\partial(\Sigma)$) denote the set of punctures (resp. special points), so that $M=P \cup M_\partial$. 

A marked surface is called a \emph{punctured surface} if it has empty boundary (and hence $M_\partial=\emptyset$). 
We denote by $g$ the genus of $\Sigma$, $h$ the number of punctures, and $b$ the number of boundary components in the sequel. We always assume the following conditions:
\begin{enumerate}
    \item[(S1)] Each boundary component (if exists) has at least one marked point.
    \item[(S2)] $2(2g-2+h+b)+\spe >0$.
    \item[(S3)] If $g=0$ and $b=0$, then $h \geq 4$.
    \item[(S4)] $\Sigma$ is not a once-punctured monogon.
\end{enumerate}

An \emph{ideal arc} in $\Sigma$ is an isotopy class of a curve in $\Sigma$ with endpoints in $P \cup M_\partial$ which has no self-intersections except possibly at its endpoints, not contractible to one point, and  
not isotopic to a boundary segment connecting two consecutive special points.
An {\it ideal triangulation} $\tri$ of $\Sigma$ is a collection of ideal arcs such that
\begin{itemize}
\item each pair of ideal arcs in $\tri$ can intersect only at their endpoints;
\item each region complementary to $\tri$ is a triangle whose edges (resp. vertices) are ideal arcs or boundary segments (resp. marked points).
\end{itemize}
The conditions (S1) and (S2) ensure the existence of such an ideal triangulation $\tri$, and in particular the number $2(2g-2+h+b)+\spe >0$ in (S2) gives the number of triangles in $\tri$. The number of arcs is given by $3(2g-2+h+b)+\spe >0$.
To simplify the exposition, we will only consider ideal triangulations without self-folded triangles shown below, except for \cref{rem:self-folded,lem:ML_Casimir}.
\begin{align*}
\begin{tikzpicture}
\draw[blue](0,0) -- (0,1.8);
\draw[blue] (0,0) ..controls (45:0.5) and (1.5,2.5).. (0,2.5);
\draw[blue] (0,0) ..controls (135:0.5) and (-1.5,2.5).. (0,2.5);
\fill(0,0) circle(2pt);
\fill(0,1.8) circle(2pt);
\node[blue] at (-0.3,1.2) {$\alpha$};
\node[blue] at (0.9,1) {$\beta$};
\end{tikzpicture}
\end{align*}
We call $\alpha$ the \emph{self-folded edge}. 
The condition (S4) ensures that a triangulation without self-folded triangulations always exists, and these triangulations can be connected by flips inside this class \cite{FST}.   


\subsection{Measured geodesic laminations}\label{sec:geodesic_laminations}
We recall the \emph{measured geodesic laminations}. See \cite{PH} for basic notions for the case of surfaces without marked points. For the marked surface case, we follow the formulation given by Benedetti--Bonsante \cite{BB}. Their model is suited for the cut-and-paste operations of marked surfaces, which are useful in Section \ref{sec:pA_SS_NS}
and the accompanying paper \cite{IK2}.

Let $\Sigma$ be a marked surface.
Fix a complete hyperbolic structure $h$ on $\Sigma \setminus M$ such that each puncture corresponds to a funnel and each special point corresponds to a spike of a crown. See \cref{fig:Sigma^h}. 
Let $F$ be the hyperbolic surface obtained from $(\Sigma \setminus M,h)$ by truncating each funnel at the outer side of the shortest closed geodesic. 
For $p \in P$, let $\partial_p$ denote the resulting boundary component. 
We fix 
an orientation-preserving homeomorphism
\begin{align*}
    f : \Sigma \setminus M \to F^\circ:= F \setminus \bigcup_{p \in P} \partial_p,
\end{align*}
which maps a representative of each ideal arc to a geodesic so that 
\begin{itemize}
    \item each end incident to $m \in M_\partial$ is mapped to a complete geodesic ray, and
    \item each end incident to $p \in P$ is mapped to a geodesic perpendicular to $\partial F$. 
\end{itemize}
We will refer to $(F,f)$ as a marked hyperbolic surface with geodesic boundary \emph{of type $\Sigma$}.

\begin{figure}[h]
    \centering
    \begin{tikzpicture}[scale=.7]
    \draw  (-4.5,0) ellipse (3 and 2);
    \draw (-5.5,-0.5) .. controls (-5.5,-1.35) and (-3.5,-1.35) .. (-3.5,-0.5);
    \draw (-5.4,-0.8) .. controls (-5.4,-0.2) and (-3.6,-0.2) .. (-3.6,-0.8);
    \draw  (-5.5,0.65) ellipse (0.5 and 0.5);
    \node [fill, circle, inner sep=1.3pt] at (-3.5,0.65) {};
    \node [fill, circle, inner sep=1.3pt] at (-5.05,0.85) {};
    \node [fill, circle, inner sep=1.3pt] at (-5.95,0.85) {};
    \node [fill, circle, inner sep=1.3pt] at (-5.5,0.15) {};
    \node at (-3.1,0.6) {$p$};
    \draw[red] (-5.5,0.15) .. controls (-5.2,-0.35) and (-3.5,0) .. (-3.5,0.65);
    \node [red] at (-4.4,0.25) {$\alpha$};

    \draw  (3,0) ellipse (3 and 2);
    \draw [white, ultra thick](1.3,1.65) .. controls (1.6,1.8) and (2,1.9) .. (2.3,1.95);
    \draw [white, ultra thick](3.75,1.95) .. controls (4,1.9) and (4.3,1.8) .. (4.55,1.7);
    \draw (2,-0.5) .. controls (2,-1.35) and (4,-1.35) .. (4,-0.5);
    \draw (2.1,-0.8) .. controls (2.1,-0.2) and (3.9,-0.2) .. (3.9,-0.8);
    \draw (0.85,0.7) .. controls (1.35,0.95) and (1.35,2.3) .. (1.35,3.15) .. controls (1.35,2.3) and (1.8,1.8) .. (1.8,2.65) .. controls (1.8,1.8) and (2.25,2.3) .. (2.25,3.15) .. controls (2.25,2.3) and (1.35,2.3) .. (1.35,3.15);
    \draw (2.75,0.7) .. controls (2.25,0.95) and (2.25,2.3) .. (2.25,3.15);
    \draw (3.2,0.7) .. controls (3.7,0.95) and (3.75,1.95) .. (3.75,2.5);
    \draw (5.05,0.7) .. controls (4.55,0.95) and (4.5,1.95) .. (4.5,2.5);
    \draw [dashed](3.75,2.5) .. controls (3.75,2.75) and (3.75,2.9) .. (3.6,3.2);
    \draw [dashed](4.5,2.5) .. controls (4.5,2.75) and (4.5,2.9) .. (4.65,3.2);

    \draw  (10.5,0) ellipse (3 and 2);
    \draw [white, ultra thick](8.8,1.65) .. controls (9.1,1.8) and (9.5,1.9) .. (9.8,1.95);
    \draw [white, ultra thick](11.25,1.95) .. controls (11.5,1.9) and (11.8,1.8) .. (12.05,1.7);
    \draw (9.5,-0.5) .. controls (9.5,-1.35) and (11.5,-1.35) .. (11.5,-0.5);
    \draw (9.6,-0.8) .. controls (9.6,-0.2) and (11.4,-0.2) .. (11.4,-0.8);
    \draw (8.35,0.7) .. controls (8.85,0.95) and (8.85,2.3) .. (8.85,3.15) .. controls (8.85,2.3) and (9.3,1.8) .. (9.3,2.65) .. controls (9.3,1.8) and (9.75,2.3) .. (9.75,3.15) .. controls (9.75,2.3) and (8.85,2.3) .. (8.85,3.15);
    \draw (10.25,0.7) .. controls (9.75,0.95) and (9.75,2.3) .. (9.75,3.15);
    \draw (10.7,0.7) .. controls (11.2,0.95) and (11.25,1.95) .. (11.25,2.5);
    \draw (12.55,0.7) .. controls (12.05,0.95) and (12.05,1.95) .. (12.05,2.5);
    \draw(11.65,2.5) ellipse (0.4 and 0.2);
    \node at (12.5,2.5) {$\partial_p$};
    
    \node at (-4.5,-2.5) {$\Sigma$};
    \node at (3,-2.5) {$(\Sigma \setminus M, h)$};
    \node at (10.5,-2.5) {$F$};
    \node [red] at (12,.3) {$f(\alpha)$};
    \draw [red](9.3,2.65) .. controls (9.3,0.5) and (10,0.25) .. (10.5,0.25) .. controls (11,0.25) and (11.5,0.6) .. (11.6,1.35) .. controls (11.65,1.7) and (11.65,1.9) .. (11.65,2.3);
    \end{tikzpicture}
    \caption{Left: a topological surface $\Sigma$ and an ideal arc $\alpha$ (red arc); center: the surface $\Sigma \setminus M$ with a hyperbolic metric $h$; right: the hyperbolic surface $F$ and the arc $f(\alpha)$.}
    \label{fig:Sigma^h}
\end{figure}

A \emph{geodesic lamination} on $F$ is
a closed subset $G \subset F$ which is a disjoint union of complete simple geodesics.
A \emph{transverse measure} $\mu$ on $G$ is an assignment of a  number $\mu(\alpha) \geq 0$ to each transverse arc $\alpha \subset F$ to $G$ satisfying the following conditions:
\begin{enumerate}
    \item $\mu(\alpha) >0$ if and only if $\alpha \cap G \neq \emptyset$.
    \item If $\alpha_1$ and $\alpha_2$ are isotopic through transverse arcs, then $\mu(\alpha_1) = \mu(\alpha_2)$.
    \item $\mu(\alpha)$ is $\sigma$-additive in the sense that $\mu(\alpha) = \sum_{i=1}^\infty \mu(\alpha_i)$ for any countable partition $\alpha=\bigcup_{i=1}^\infty \alpha_i$.
\end{enumerate}
Here a transverse arc to $G$ is a simple arc $\alpha$ such that $\partial \alpha \subset \interior F \setminus G$ and transverse to each leaf of $G$. Such a pair $(G,\mu)$ is called a \emph{measured geodesic lamination} on $F$.
For a measured geodesic lamination $(G, \mu)$, we call $G$ its \emph{support}.
Let $\eML(F)$ denote the space of measured geodesic laminations on $F$.

\begin{ex}\label{ex:MGL}
Let $\{\gamma_i\}$ be a finite collection of mutually-disjoint complete simple geodesics on $F$ having no accumulation points in $F^\circ$. Examples of $\gamma_i$ include closed geodesics and bi-infinite geodesics with ends either entering spikes or spiralling into $\partial F$ (\cref{fig:spiralling leaves}). Let $G:= \bigcup_i \gamma_i \subset F$, and consider the transverse measure $\delta_G$ counting the number of intersections with $G$. Then $(G,\delta_G)$ is a measured geodesic lamination.  
\end{ex}

Here are some basic results on the structure of a measured geodesic lamination:

\begin{lem}[{\cite[Lemma 2.1]{BKS}}]\label{l:spiralling leaves}
Let $G$ be a geodesic lamination on $F$. 
Then for each boundary component $\partial_p \subset \partial F$ for $p \in P$, there exists an $\epsilon$-neighborhood $U_p$ such that every leaf of $G$ intersecting $U_p$ must spiral around $\partial_p$ (\cref{fig:spiralling leaves}). Moreover, the leaves in $U_p \cap G$ are locally isolated.
\end{lem}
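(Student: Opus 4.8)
The plan is to reduce everything to the hyperbolic geometry of a collar of the closed boundary geodesic $\partial_p$ and to classify how a complete geodesic can sit inside it. Write $\ell_p$ for the length of $\partial_p$. I pass to the universal cover, realising $F$ as $\bH^2/\Gamma$ for a Fuchsian group $\Gamma$, choose a lift of $\partial_p$ to the imaginary axis $A$ so that the boundary holonomy is $\gamma_p\colon z\mapsto e^{\ell_p}z$, and arrange that the side of $A$ corresponding to $F$ lies over the positive reals. Equivalently, I work in Fermi coordinates $(t,x)$ on the collar, where $t\ge 0$ is the distance to $\partial_p$ and the metric is $dt^2+\cosh^2 t\,dx^2$.

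First I would classify the leaves meeting the collar. A leaf of $G$ is a complete simple geodesic contained in $F$, so it never crosses $\partial_p$; hence its lift $\tilde\lambda$ is a geodesic disjoint from $A$, with both ideal endpoints $a<b$ positive. By Clairaut's relation for the metric $dt^2+\cosh^2 t\,dx^2$, a geodesic with Clairaut constant $c$ satisfies $\dot t^2=1-c^2/\cosh^2 t$, so it reaches a minimal distance $t_{\min}$ to $\partial_p$ with $\cosh t_{\min}=|c|$: for $|c|<1$ it crosses $\partial_p$ (excluded), for $|c|=1$ it approaches $\partial_p$ asymptotically and winds monotonically towards it, i.e.\ it spirals, while for $|c|>1$ it dips to depth $t_{\min}=\operatorname{arccosh}|c|>0$ and then leaves the collar again. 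Computing the distance from $\tilde\lambda$ to $A$ directly gives $\cosh t_{\min}=(b+a)/(b-a)$.

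The crux is a uniform lower bound on the depth reached by the non-spiralling (dipping) leaves, and this is where I expect the main difficulty to lie. Since a leaf is simple and distinct leaves are disjoint, $\tilde\lambda$ must be disjoint from every translate $\gamma_p^{\,n}\tilde\lambda$ ($n\neq 0$), whose endpoints are $e^{n\ell_p}a<e^{n\ell_p}b$. A short non-linking computation shows this forces $b/a\le e^{\ell_p}$, whence
\[
\cosh t_{\min}=\frac{b+a}{b-a}=\frac{b/a+1}{b/a-1}\ \ge\ \coth(\ell_p/2).
\]
Thus every dipping leaf stays at distance at least $\delta_p:=\operatorname{arccosh}\coth(\ell_p/2)>0$ from $\partial_p$. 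Choosing any $\epsilon<\delta_p$ and setting $U_p:=\{t<\epsilon\}$, no dipping or crossing leaf can meet $U_p$, so every leaf of $G$ intersecting $U_p$ is of the asymptotic type and spirals around $\partial_p$. Note that it is genuinely the disjointness of the \emph{lamination} (not of a single geodesic) that is used here: a lone geodesic may dip arbitrarily close to $\partial_p$, so the bound $\delta_p$ only survives because of the constraint imposed by the $\gamma_p$-translates.

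For the local isolation I would show that only finitely many leaves spiral onto $\partial_p$. Any accumulation of a half-leaf onto $\partial_p$ forces $\partial_p\in G$ (as $G$ is closed); and if a minimal sublamination of $G$ contained a leaf spiralling onto the closed geodesic $\partial_p$, then $\partial_p$ would lie in its closure, and minimality would force that component to equal $\partial_p$ itself. Hence no spiralling leaf other than $\partial_p$ belongs to a nontrivial minimal component. By the structure theorem for geodesic laminations on a finite-type surface (finitely many minimal components together with finitely many isolated, non-recurrent leaves), the leaves spiralling onto $\partial_p$ are among the finitely many isolated leaves. Since for $\epsilon<\delta_p$ these are the only leaves meeting $U_p$, the set $G\cap U_p$ consists of finitely many spiral strands, each crossing any transversal in the open collar finitely often; therefore the leaves in $U_p\cap G$ are locally isolated.
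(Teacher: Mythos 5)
Your proof is correct. Note that the paper itself offers no argument for this statement — it is quoted verbatim from \cite[Lemma 2.1]{BKS} — so there is nothing internal to compare against; what you have written is essentially the standard collar-lemma proof, and your quantitative bound $\cosh t_{\min}\ge\coth(\ell_p/2)$ is exactly the sharp half-collar width $\operatorname{arcsinh}(1/\sinh(\ell_p/2))$, with the key point correctly identified: simplicity of the leaf forces disjointness from its $\gamma_p$-translates, which pins down $b/a\le e^{\ell_p}$ and hence a uniform depth for all non-asymptotic leaves. The finiteness argument for the spiralling leaves (they cannot lie in a minimal component, hence are among the finitely many non-recurrent leaves) is also sound; just be aware that the structure theorem you invoke is usually stated for closed or boundaryless finite-type surfaces, so you should either cite a version for compact surfaces with geodesic boundary or note that the Euler-characteristic/spike-counting argument goes through verbatim there (naive doubling does not work, since a leaf spiralling onto $\partial_p$ does not extend to a geodesic of the doubled surface). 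One wording slip: a spiralling strand meets a radial transversal reaching all the way to $\partial_p$ \emph{infinitely} often; your "finitely often" claim is only valid for compact transversals contained in $U_p\setminus\partial_p$, which is indeed the correct reading of "locally isolated" here, but you should say so explicitly.
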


We say that a geodesic lamination $G$ is \emph{compact} if it is disjoint from a neighborhood of the boundary of $F$. We denote $\ML_0(F) \subset \eML(F)$ the subset of compact measured geodesic laminations.

\begin{lem}[{\cite[Lemma 2.2]{BKS}, \cite[Corollary 1.7.3]{PH}}]\label{l:structure_of_lamination}
The support of any measured geodesic lamination $(G,\mu)$ on $F$ splits as the union of sub-laminations
\begin{align*}
    G = B \cup S \cup G_1 \cup \dots \cup G_k,
\end{align*}
where $B$ is the union of leaves that enter any neighborhood of the closed boundary or the spike of $F$, $S$ is a finite union of closed geodesics, and $G_j$ is a compact lamination whose leaves are bi-infinite geodesics which are dense in $G_j$ for $j=1,\dots,k$.
\end{lem}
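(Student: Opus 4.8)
The plan is to peel off the leaves that accumulate on the boundary and then apply the structure theory of compact geodesic laminations to what remains, using the transverse measure to discard any leftover connecting leaves. First I would use \cref{l:spiralling leaves} to fix, for each $p \in P$, an $\epsilon$-neighborhood $U_p$ of $\partial_p$ in which every leaf of $G$ spirals around $\partial_p$ and is locally isolated. Setting $B$ to be the union of the leaves of $G$ that enter some $U_p$ gives exactly the leaves entering every neighborhood of $\partial F$, and $G^{\mathrm{cpt}} := G \setminus B$ is then disjoint from a neighborhood of $\partial F$. Since $G$ is closed and the boundary leaves are locally isolated near $\partial F$, the set $G^{\mathrm{cpt}}$ is again closed and foliated by complete geodesics, hence a \emph{compact} geodesic lamination, and $G = B \sqcup G^{\mathrm{cpt}}$.

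Next I would analyze $G^{\mathrm{cpt}}$ via the classical structure theory of compact geodesic laminations (\cite[Corollary 1.7.3]{PH}): it has only finitely many minimal sublaminations, and each is either a simple closed geodesic or an \emph{exceptional} minimal lamination in which every leaf is bi-infinite and dense. The geometric engine here is Gauss--Bonnet: the complementary regions of $G^{\mathrm{cpt}}$ in $F$ have total area at most $2\pi|\chi(F)|$, while each minimal component cuts out a complementary region of definite area, which bounds their number. I would import this dichotomy and finiteness rather than reprove them, and collect the closed-geodesic components into a finite union $S$ and label the exceptional ones $G_1, \dots, G_k$; each $G_j$ is compact as a closed subset of $G^{\mathrm{cpt}}$ and has dense bi-infinite leaves by minimality.

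It remains to show that $G^{\mathrm{cpt}}$ contains nothing beyond these minimal components, i.e. that the a priori possible isolated connecting leaves (whose ends spiral onto the minimal components) do not occur. This is exactly where the transverse measure enters: an invariant transverse measure is supported on the union of the minimal (recurrent) components, so a transverse arc meeting only a non-recurrent connecting leaf carries zero measure; but condition (1) in the definition of $\mu$ forces $\mu(\alpha) > 0$ whenever $\alpha \cap G \neq \emptyset$. A short transverse arc emanating from such a connecting leaf on the side where it is isolated would violate this, so no connecting leaf exists and $G^{\mathrm{cpt}} = S \cup G_1 \cup \dots \cup G_k$. Combined with Step 1 this yields the asserted decomposition $G = B \cup S \cup G_1 \cup \dots \cup G_k$. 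I expect the main obstacle to be Step 2 --- the finiteness of the minimal components together with the closed-geodesic/exceptional dichotomy --- since this is where the real hyperbolic geometry (the Gauss--Bonnet area bound and the recurrence analysis of minimal sets) is needed; the separation of $B$ and the measure-theoretic elimination of connecting leaves are comparatively soft once \cref{l:spiralling leaves} and the support property are in hand.
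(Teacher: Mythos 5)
The paper itself gives no proof of this lemma: it is imported verbatim from \cite[Lemma 2.2]{BKS} and \cite[Corollary 1.7.3]{PH}. Your argument is, in outline, exactly the proof found in those references: peel off the boundary-spiralling part $B$ using \cref{l:spiralling leaves}, apply the classical finiteness/dichotomy for minimal components of a compact geodesic lamination to the remainder $G^{\mathrm{cpt}}$, and then use the full-support condition (1) on $\mu$ to exclude isolated connecting leaves. So the proposal is correct and consistent with the intended proof; the two points below are only matters of making the sketch airtight.

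First, the closedness of $G^{\mathrm{cpt}} = G \setminus B$ is better justified not by local isolation but as follows: $\overline{G^{\mathrm{cpt}}}$ is a sublamination of $G$ contained in the complement of the open collars $U_p$, whereas every leaf of $B$ meets some $U_p$; since a point of $\overline{G^{\mathrm{cpt}}}$ lying on a leaf $g$ of $G$ forces $g \subset \overline{G^{\mathrm{cpt}}}$, no leaf of $B$ can meet $\overline{G^{\mathrm{cpt}}}$. Second, the principle you invoke in Step 3 --- ``an invariant transverse measure is supported on the recurrent part, so non-recurrent leaves contradict condition (1)'' --- would, read literally, also exclude the leaves of $B$, which are equally non-recurrent; so you should state the mechanism rather than the slogan. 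The mechanism is: an isolated connecting leaf $\ell$ of $G^{\mathrm{cpt}}$ carries an atom of some mass $c>0$ on a short transversal where it is isolated, this atom is transported with the same mass along $\ell$ by holonomy invariance, and the end of $\ell$ accumulates on an \emph{interior} minimal component, so $\ell$ crosses a single fixed admissible transversal $\beta$ to that component in infinitely many points, forcing $\mu(\beta)=\infty$. For a leaf of $B$ the accumulation set is a boundary geodesic $\partial_p$, which is crossed by no admissible transversal (endpoints must lie in $\interior F \setminus G$), so no single arc meets such a leaf infinitely often and the argument correctly does not apply to $B$. With these two clarifications the proof is complete.
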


\begin{rmk}
The case of $F$ with closed boundary components is treated in \cite{BKS} and \cite{PH}.
In the case where $F$ has a crown-shaped boundary, it can be reduced to the previous case by considering the hyperbolic surface $DF$ with the type of the double of the surface $\Sigma$ along the boundary. (See also \cref{subsec:proof_1}.)
\end{rmk}

The compact part $S \cup G_1 \cup \dots \cup G_k$ is classically well-studied. For example, see \cite{PH,CB}. 

Note that each leaf $g$ in the non-compact part $B$ determines a properly embedded arc in the surface $\overline{F \setminus \bigcup_{p \in P} U_p}$. We call its homotopy class the \emph{homotopy class of $g$}, which is independent of the neighborhoods $U_p$ in \cref{l:spiralling leaves}. 
We define the lamination signature 
\begin{align*}
    \sigma_G: P \to \{+,0,-\}
\end{align*}
of $G$ by $\sigma_G(p):=+$ (resp. $\sigma_G(p):=-$) if some leaves of $G$ spiral positively (resp. negatively) as in the top (resp. bottom) in \cref{fig:spiralling leaves} around the boundary component $\partial_p$, and $\sigma_G(p):=0$ otherwise. 

\begin{figure}
    \centering
    \begin{tikzpicture}[scale=.9]
    \draw  (-0.5,-8) ellipse (0.4 and 1.35);
    \draw (-0.5,-6.65) .. controls (-2,-6.65) and (-2.5,-6.65) .. (-4,-6.15);
    \draw (-0.5,-9.35) .. controls (-2,-9.35) and (-2.75,-9.4) .. (-4,-9.85);
    \draw[red] (-3,-6.45) .. controls (-2.5,-6.5) and (-2.55,-9.4) .. (-2.25,-9.4);
    \draw[red] (-1.85,-6.62) .. controls (-1.55,-6.65) and (-1.6,-9.35) .. (-1.2,-9.35);
    \draw [red](-1.3,-6.65) .. controls (-1.4,-6.65) and (-1.2,-9.35) .. (-0.8,-9.35);
    \draw [red](-0.9,-6.65) .. controls (-1.15,-6.65) and (-1.1,-9.35) .. (-0.65,-9.35);
    \draw [red](-0.7,-6.65) .. controls (-1.1,-6.65) and (-1.05,-9.35) .. (-0.57,-9.35);
    \draw [red, dashed](-3,-6.45) .. controls (-3.5,-6.5) and (-3.5,-7) .. (-4,-7.5);
    \draw [red, dashed](-1.85,-6.65) .. controls (-2.2,-6.65) and (-1.95,-9.4) .. (-2.25,-9.4);
    \node at (0.45,-8) {$\partial_p$};
    
    \fill [gray!30]  (5,-6.5) rectangle (5.2,-9.5);
    \draw  (5,-8) node (v2) {} ellipse (1.5 and 1.5);
    \draw (5,-6.5) -- (5,-9.5) node (v1) {};
    \draw  plot[smooth, tension=.7] coordinates {(v1)};
    \draw (5,-9.5);
    \draw (5,-9.5);
    \node at (5,-6.1) {$\widetilde{\partial_p}$};
    \draw [red] (4.2,-6.75) .. controls (4.65,-7.35) and (5,-8.5) .. (5,-9.5);
    \node [red] at (-3,-6.05) {$G$};
    \node [red] at (3.95,-6.4) {$\widetilde{G}$};
    \draw [dashed](v2) -- (3.5,-8);
    \draw [dashed](5,-7.6) .. controls (4.5,-7.6) and (4.05,-7.5) .. (3.65,-7.25);
    \draw [dashed](5,-8.4) .. controls (4.5,-8.4) and (4.05,-8.5) .. (3.65,-8.75);
    \draw [dashed](5,-7.25) .. controls (4.55,-7.25) and (4.15,-7.05) .. (4,-6.9);
    \draw [dashed](5,-8.75) .. controls (4.55,-8.75) and (4.15,-8.9) .. (4,-9.1);
    \draw [dashed](5,-6.95) .. controls (4.75,-6.95) and (4.55,-6.85) .. (4.4,-6.6);
    \draw [dashed](5,-9.05) .. controls (4.75,-9.05) and (4.5,-9.15) .. (4.35,-9.35);
    \draw [dashed](5,-6.7) .. controls (4.9,-6.7) and (4.7,-6.65) .. (4.7,-6.55);
    \draw [dashed](5,-9.3) .. controls (4.9,-9.3) and (4.7,-9.35) .. (4.7,-9.45);
    \node at (1.7,-9.75) {$\sigma_G(p) = -$};
    
    \begin{scope}[yscale=-1, shift={(-1,0)}]
    \draw  (0.5,3) ellipse (0.4 and 1.35);
    \draw  (6,3) node (v2) {} ellipse (1.5 and 1.5);
    \draw (0.5,4.35) .. controls (-1,4.35) and (-1.5,4.35) .. (-3,4.85);
    \draw (0.5,1.65) .. controls (-1,1.65) and (-1.75,1.6) .. (-3,1.15);
    \draw[red] (-2,4.55) .. controls (-1.5,4.5) and (-1.55,1.6) .. (-1.25,1.6);
    \draw[red] (-0.85,4.38) .. controls (-0.55,4.35) and (-0.6,1.65) .. (-0.2,1.65);
    \draw [red](-0.3,4.35) .. controls (-0.4,4.35) and (-0.2,1.65) .. (0.2,1.65);
    \draw [red](0.1,4.35) .. controls (-0.15,4.35) and (-0.1,1.65) .. (0.35,1.65);
    \draw [red](0.3,4.35) .. controls (-0.1,4.35) and (-0.05,1.65) .. (0.43,1.65);
    \draw [red, dashed](-2,4.55) .. controls (-2.5,4.5) and (-2.5,4) .. (-3,3.5);
    \draw [red, dashed](-0.85,4.35) .. controls (-1.2,4.35) and (-0.95,1.6) .. (-1.25,1.6);
    \node at (1.45,3) {$\partial_p$};
    
    \fill [gray!30]  (6,4.5) rectangle (6.2,1.5);
    \draw  (6,3) ellipse (1.5 and 1.5);
    \draw (6,4.5) -- (6,1.5) node (v1) {};
    \draw  plot[smooth, tension=0.7] coordinates {(v1)};
    \draw (6,1.5);
    \draw (6,1.5);
    \node at (6,1.1) {$\widetilde{\partial_p}$};
    \draw [red] (5.2,4.25) .. controls (5.65,3.65) and (6,2.5) .. (6,1.5);
    \node [red] at (-1.2,1.15) {$G$};
    \draw [dashed](v2) -- (4.5,3);
    \draw [dashed](6,3.4) .. controls (5.5,3.4) and (5.05,3.5) .. (4.65,3.75);
    \draw [dashed](6,2.6) .. controls (5.5,2.6) and (5.05,2.5) .. (4.65,2.25);
    \draw [dashed](6,3.75) .. controls (5.55,3.75) and (5.15,3.95) .. (5,4.1);
    \draw [dashed](6,2.25) .. controls (5.55,2.25) and (5.15,2.1) .. (5,1.9);
    \draw [dashed](6,4.05) .. controls (5.75,4.05) and (5.55,4.15) .. (5.4,4.4);
    \draw [dashed](6,1.95) .. controls (5.75,1.95) and (5.5,1.85) .. (5.35,1.65);
    \draw [dashed](6,4.3) .. controls (5.9,4.3) and (5.7,4.35) .. (5.7,4.45);
    \draw [dashed](6,1.7) .. controls (5.9,1.7) and (5.7,1.65) .. (5.7,1.55);
    \node [red] at (4.95,4.6) {$\widetilde{G}$};
    \end{scope}
    \node at (1.7,-4.8) {$\sigma_G(p) = +$};
    \end{tikzpicture}
    \caption{Left: Two spiralling patterns around $\partial_p$. Right: Lifts in the universal cover $\widetilde{F} = \bH^2$. Each region separated by dashed lines is a fundamental domain.\CORRECTED}
    \label{fig:spiralling leaves}
\end{figure}

\begin{lem}[{\cite[Lemma 2.3]{BKS}, \cite[Proposition 3.18]{BB}}]\label{l:determined by signature}
The support of the non-compact part $B$ of a measured geodesic lamination $(G,\mu)$ on $F$ is uniquely determined by its homotopy class and the signature $\sigma_G$. 
\end{lem}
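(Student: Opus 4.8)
The plan is to pass to the universal cover $\widetilde{F} = \bH^2$ and reduce the statement to the assertion that each individual leaf of $B$ is uniquely determined by its homotopy class together with the direction of spiralling recorded by $\sigma_G$. Since the support of $B$ is the union of its leaves, and these leaves are locally isolated near the boundary by \cref{l:spiralling leaves}, it suffices to reconstruct each leaf from the given data; running this over all leaves then recovers the full support.

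So fix a leaf $g$ of $B$ and a lift $\widetilde{g} \subset \bH^2$. By \cref{l:spiralling leaves}, each end of $g$ eventually enters a neighborhood $U_p$ and spirals around the boundary geodesic $\partial_p$. In the universal cover this means the corresponding end of $\widetilde{g}$ is asymptotic to some lift $\widetilde{\partial_p}$ of $\partial_p$; since a geodesic spiralling onto $\widetilde{\partial_p}$ must share a point at infinity with it, that end of $\widetilde{g}$ converges to one of the two endpoints of $\widetilde{\partial_p}$ on $\partial_\infty \bH^2 = S^1$. The proof now amounts to recovering, from the homotopy class and the signature, both the relevant translate $\widetilde{\partial_p}$ and the choice between its two endpoints.

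First, once the lift $\widetilde{g}$ is fixed, the homotopy class of the truncated arc that $g$ determines in $\overline{F \setminus \bigcup_p U_p}$ pins down which translate $\widetilde{\partial_p}$ each end of $\widetilde{g}$ approaches: sliding the free endpoint of the arc along $\partial U_p$ corresponds in $\bH^2$ to moving along the lift of $\partial U_p$ surrounding $\widetilde{\partial_p}$, which is exactly why the homotopy class of $g$ is independent of the auxiliary neighborhoods. Second, the remaining binary choice---which of the two endpoints of $\widetilde{\partial_p}$ is the limit---is governed by whether the spiralling is positive or negative, that is, by $\sigma_G(p)$; comparing the two configurations in \cref{fig:spiralling leaves} shows that the two spiralling senses select the two distinct fixed points on $S^1$ of the hyperbolic deck transformation whose axis is $\widetilde{\partial_p}$. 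Performing this at both ends fixes the two endpoints of $\widetilde{g}$ on $S^1$, and a complete geodesic in $\bH^2$ is uniquely determined by its pair of endpoints at infinity; hence $\widetilde{g}$, and therefore $g$, is uniquely determined.

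I expect the main obstacle to be the bookkeeping that makes the two matchings in the previous paragraph precise and consistent: verifying that, once a lift is fixed, the homotopy class genuinely singles out a unique translate $\widetilde{\partial_p}$ rather than an entire $\bZ$-orbit of translates (which is ensured by allowing the endpoint of the truncated arc to slide along $\partial U_p$), and checking that the sign convention defining $\sigma_G$ is matched to the correct endpoint uniformly across all boundary components and all leaves. Both points are settled by a careful reading of \cref{fig:spiralling leaves} in the universal cover, using that the stabilizer of $\widetilde{\partial_p}$ is generated by a hyperbolic translation along $\widetilde{\partial_p}$ whose two fixed endpoints are precisely the candidate limits distinguished by the spiralling direction.
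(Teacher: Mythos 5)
Your argument is correct, but note that the paper itself offers no proof of \cref{l:determined by signature}: it is quoted verbatim from the cited references (Lemma 2.3 of Bonsante--Krasnov--Schlenker and Proposition 3.18 of Benedetti--Bonsante), so there is nothing internal to compare against. What you have written is essentially the standard argument from those sources, and it is complete: each end of a lift $\widetilde{g}$ eventually stays in a single component $\widetilde{U_p}$ of the preimage of $U_p$, which is a half-collar of a single lift $\widetilde{\partial_p}$, so the spiralling forces $\widetilde{g}$ to be asymptotic to $\widetilde{\partial_p}$ and hence to share one of its two ideal endpoints; the homotopy class of the truncated arc (with endpoints free to slide in $\partial U_p$) pins down the component $\widetilde{U_p}$, and the residual $\bZ$-ambiguity from the stabilizer of $\widetilde{\partial_p}$ is harmless precisely because the two candidate ideal points are the fixed points of that stabilizer; the sign $\sigma_G(p)$ then selects between them, and two ideal endpoints determine a geodesic in $\bH^2$. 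Your closing worry about consistency of the sign convention is also not an issue for the statement as given, since $\sigma_G$ is part of the data and disjointness of leaves forces all leaves spiralling into the same $\partial_p$ to do so in the same sense. The only point worth making explicit is the final step: since each leaf is uniquely recovered from its homotopy class and sign, and $B$ is the union of its leaves, the support of $B$ is determined; in particular two distinct leaves cannot share the same homotopy class and sign, as they would then be the same geodesic.
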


\begin{rem}
The lemma above tells us that the non-compact part of any measured geodesic lamination can be determined only by the homotopy classes of the curves on the topological surface $\Sigma$ determined by $g$ and the signature $\sigma_G$. 
This is basically the formulation by Fock--Goncharov \cite{FG07} of \emph{rational unbounded laminations}. In fact, the compact part can be also described as a topological object on $\Sigma$ when it is ``rational''. The rational unbounded laminations on $\Sigma$ in the sense of \cite{FG07} form a dense subset of $\eML(F)$. Conversely, $\eML(F)$ can be understood as the metric completion of the set of rational unbounded laminations with respect to the Euclidean metric induced by any shear coordinate system. 
\end{rem}

We are going to define global coordinates on $\eML(F)$, called the \emph{shear coordinates}.
Fix an ideal triangulation $\tri$ of $\Sigma$, and represent each ideal arc $\alpha \in \tri$ by perpendicular geodesics $f(\alpha)$.
Given $(G,\mu) \in \eML(F)$, let $(G',\mu')$ be the measured geodesic lamination obtained from $(G,\mu)$ by shifting each leaf of $G'$ incident to a spike to a geodesic transverse to the boundary following the orientation induced by $\Sigma$. See \cref{fig:shift_lam}.
\begin{figure}
    \centering
    \begin{tikzpicture}[scale=.8]
    \draw (-1.5,2) .. controls (-1.5,0.55) and (0.5,0.55) .. (0.5,2);
    \draw [white, ultra thick](-0.6,0.9)  -- (-0.4,0.9);
    \draw (-3,-0.5) .. controls (-2,0) and (-1.5,0.55) .. (-1.5,2) .. controls (-1.5,0.55) and (-0.5,0.05) .. (-0.5,1.5) .. controls (-0.5,0.05) and (0.5,0.55) .. (0.5,2) .. controls (0.5,0.55) and (1,0) .. (2,-0.5);
    \draw [red](-0.5,1.5) .. controls (-0.5,0.05) and (0,-0.1) .. (0.25,-0.25);
    \draw [red, dashed](0.25,-0.25) .. controls (0.5,-0.4) and (0.75,-0.45) .. (1,-0.5);
    
    \draw [very thick,-{Classical TikZ Rightarrow[length=4pt]},decorate,decoration={snake,amplitude=1.5pt,pre length=2pt,post length=3pt}](2.5,0.75) -- (4,0.75);
    
    \draw (6,2) .. controls (6,0.55) and (8,0.55) .. (8,2);
    \draw [white, ultra thick](6.9,0.9)  -- (7.1,0.9);
    \draw (4.5,-0.5) .. controls (5.5,0) and (6,0.55) .. (6,2) .. controls (6,0.55) and (7,0.05) .. (7,1.5) .. controls (7,0.05) and (8,0.55) .. (8,2) .. controls (8,0.55) and (8.5,0) .. (9.5,-0.5);
    \draw [red](6.55,0.65) .. controls (7,0.05) and (7.5,-0.15) .. (7.75,-0.25);
    \draw [red, dashed](7.75,-0.25) .. controls (8,-0.35) and (8.25,-0.45) .. (8.5,-0.5);
    \end{tikzpicture}
    \caption{The way to shifting around spikes.}
    \label{fig:shift_lam}
\end{figure}
Then, the resulting ``shifted'' measured geodesic lamination $(G',\mu')$ transversely intersects with geodesics $f(\alpha)$ for $\alpha \in \tri$.

Fix $\alpha \in \tri$, and we focus on the quadrilateral $\square_\alpha$ of $\tri$ which contains $\alpha$ as a diagonal. 

Then, each path-connected component of $G' \cap \square_\alpha$ is either surrounding a corner of $\square_\alpha$ or not.
Then we take a connected compact subarc $\gamma_\alpha \subset \alpha$ which intersects only the components of $G' \cap \square_\alpha$ not surrounding corners of $\square_\alpha$.

Under the configuration of \cref{fig:lam_in_quad}, define
\begin{align*}
\sfx_\alpha^{\tri}(G,\mu) :=
\begin{cases}
    \mu'(\gamma_\alpha) & \text{if $\square_\alpha \cap G'$ has north-west to south-east components}, \\
    -\mu'(\gamma_\alpha)& \text{if $\square_\alpha \cap G'$ has north-east to south-west components}\\
    0 & \mbox{otherwise}.
\end{cases}
\end{align*}

\begin{figure}[h]
    \centering
    \begin{tikzpicture}[scale=.8]
    \draw [blue](-1.5,3) coordinate (v1) -- (-4,0.5) node (v2) {} -- (-1.5,-2) node (v3) {} -- (1,0.5) node (v4) {} -- (v1);
    \draw[blue] (v1) -- (v3);
    \draw[fill=white]  (v1) ellipse (0.5 and 0.5);
    \draw[fill=white]  (v2) ellipse (0.5 and 0.5);
    \draw[fill=white]  (v3) ellipse (0.5 and 0.5);
    \draw[fill=white]  (v4) ellipse (0.5 and 0.5);
    \draw [red](-2.5,3) .. controls (-2.5,2.5) and (-2,2) .. (-1.5,2) .. controls (-1,2) and (-0.5,2.5) .. (-0.5,3);
    \draw [red](-4,1.5) .. controls (-3.5,1.5) and (-3,1) .. (-3,0.5) .. controls (-3,0) and (-3.5,-0.5) .. (-4,-0.5);
    \draw [red](-2.5,-2) .. controls (-2.5,-1.5) and (-2,-1) .. (-1.5,-1) .. controls (-1,-1) and (-0.5,-1.5) .. (-0.5,-2);
    \draw [red](1,-0.5) .. controls (0.5,-0.5) and (0,0) .. (0,0.5) .. controls (0,1) and (0.5,1.5) .. (1,1.5);
    \draw [red](-2.05,3) .. controls (-2.05,2.7) and (-1.8,2.45) .. (-1.5,2.45) .. controls (-1.2,2.45) and (-0.95,2.7) .. (-0.95,3);
    \draw [red](-4,1.05) .. controls (-3.7,1.05) and (-3.45,0.8) .. (-3.45,0.5) .. controls (-3.45,0.2) and (-3.7,-0.05) .. (-4,-0.05);
    \draw [red](-2.05,-2) .. controls (-2.05,-1.7) and (-1.8,-1.45) .. (-1.5,-1.45) .. controls (-1.2,-1.45) and (-0.95,-1.7) .. (-0.95,-2);
    \draw [red](1,-0.05) .. controls (0.7,-0.05) and (0.45,0.2) .. (0.45,0.5) .. controls (0.45,0.8) and (0.7,1.05) .. (1,1.05);
    \draw [red](-2.15,3) .. controls (-2.15,2.65) and (-1.85,2.35) .. (-1.5,2.35) .. controls (-1.15,2.35) and (-0.85,2.65) .. (-0.85,3);
    \draw [red](-4,1.15) .. controls (-3.65,1.15) and (-3.35,0.85) .. (-3.35,0.5) .. controls (-3.35,0.15) and (-3.65,-0.15) .. (-4,-0.15);
    \draw [red](-2.15,-2) .. controls (-2.15,-1.65) and (-1.85,-1.35) .. (-1.5,-1.35) .. controls (-1.15,-1.35) and (-0.85,-1.65) .. (-0.85,-2);
    \draw [red](1,-0.15) .. controls (0.65,-0.15) and (0.35,0.15) .. (0.35,0.5) .. controls (0.35,0.85) and (0.65,1.15) .. (1,1.15);
    \draw [red](-2.3,3) .. controls (-2.3,2.55) and (-1.95,2.2) .. (-1.5,2.2) .. controls (-1.05,2.2) and (-0.7,2.55) .. (-0.7,3);
    \draw [red](-4,1.3) .. controls (-3.55,1.3) and (-3.2,0.95) .. (-3.2,0.5) .. controls (-3.2,0.05) and (-3.55,-0.3) .. (-4,-0.3);
    \draw [red](-2.3,-2) .. controls (-2.3,-1.55) and (-1.95,-1.2) .. (-1.5,-1.2) .. controls (-1.05,-1.2) and (-0.7,-1.55) .. (-0.7,-2);
    \draw [red](1,-0.3) .. controls (0.55,-0.3) and (0.2,0.05) .. (0.2,0.5) .. controls (0.2,0.95) and (0.55,1.3) .. (1,1.3);
    \draw [red](-4,2) .. controls (-3.45,2) and (0,-1.45) .. (0,-2);
    \draw [red](-3,3) .. controls (-3,2.45) and (0.45,-1) .. (1,-1);
    \draw [red](-4,2.5) .. controls (-3.55,2.5) and (0.5,-1.55) .. (0.5,-2);
    \draw [red](-3.5,3) .. controls (-3.5,2.55) and (0.55,-1.5) .. (1,-1.5);
    \draw [red](-4,1.75) .. controls (-3.4,1.75) and (-0.25,-1.4) .. (-0.25,-2);
    \draw [red](-2.75,3) .. controls (-2.75,2.4) and (0.4,-0.75) .. (1,-0.75);
    \draw [red](-4,2.25) .. controls (-3.5,2.25) and (0.25,-1.5) .. (0.25,-2);
    \draw [red](-3.25,3) .. controls (-3.2,2.5) and (0.5,-1.25) .. (1,-1.25);
    \draw [red](-4,3) -- (1,-2);
    \draw [mygreen, very thick](-1.5,1.7) node[fill, circle, inner sep=1.5pt] {} -- (-1.5,-0.7) node[fill, circle, inner sep=1.5pt] {};
    \node [mygreen] at (-1,1.5) {$\gamma_\alpha$};
    
    \begin{scope}[xshift=5cm, xscale=-1]
    \draw [blue](-1.5,3) coordinate (v1) -- (-4,0.5) node (v2) {} -- (-1.5,-2) node (v3) {} -- (1,0.5) node (v4) {} -- (v1);
    \draw[blue] (v1) -- (v3);
    \draw[fill=white]  (v1) ellipse (0.5 and 0.5);
    \draw[fill=white]  (v2) ellipse (0.5 and 0.5);
    \draw[fill=white]  (v3) ellipse (0.5 and 0.5);
    \draw[fill=white]  (v4) ellipse (0.5 and 0.5);
    \draw [red](-2.5,3) .. controls (-2.5,2.5) and (-2,2) .. (-1.5,2) .. controls (-1,2) and (-0.5,2.5) .. (-0.5,3);
    \draw [red](-4,1.5) .. controls (-3.5,1.5) and (-3,1) .. (-3,0.5) .. controls (-3,0) and (-3.5,-0.5) .. (-4,-0.5);
    \draw [red](-2.5,-2) .. controls (-2.5,-1.5) and (-2,-1) .. (-1.5,-1) .. controls (-1,-1) and (-0.5,-1.5) .. (-0.5,-2);
    \draw [red](1,-0.5) .. controls (0.5,-0.5) and (0,0) .. (0,0.5) .. controls (0,1) and (0.5,1.5) .. (1,1.5);
    \draw [red](-2.05,3) .. controls (-2.05,2.7) and (-1.8,2.45) .. (-1.5,2.45) .. controls (-1.2,2.45) and (-0.95,2.7) .. (-0.95,3);
    \draw [red](-4,1.05) .. controls (-3.7,1.05) and (-3.45,0.8) .. (-3.45,0.5) .. controls (-3.45,0.2) and (-3.7,-0.05) .. (-4,-0.05);
    \draw [red](-2.05,-2) .. controls (-2.05,-1.7) and (-1.8,-1.45) .. (-1.5,-1.45) .. controls (-1.2,-1.45) and (-0.95,-1.7) .. (-0.95,-2);
    \draw [red](1,-0.05) .. controls (0.7,-0.05) and (0.45,0.2) .. (0.45,0.5) .. controls (0.45,0.8) and (0.7,1.05) .. (1,1.05);
    \draw [red](-2.15,3) .. controls (-2.15,2.65) and (-1.85,2.35) .. (-1.5,2.35) .. controls (-1.15,2.35) and (-0.85,2.65) .. (-0.85,3);
    \draw [red](-4,1.15) .. controls (-3.65,1.15) and (-3.35,0.85) .. (-3.35,0.5) .. controls (-3.35,0.15) and (-3.65,-0.15) .. (-4,-0.15);
    \draw [red](-2.15,-2) .. controls (-2.15,-1.65) and (-1.85,-1.35) .. (-1.5,-1.35) .. controls (-1.15,-1.35) and (-0.85,-1.65) .. (-0.85,-2);
    \draw [red](1,-0.15) .. controls (0.65,-0.15) and (0.35,0.15) .. (0.35,0.5) .. controls (0.35,0.85) and (0.65,1.15) .. (1,1.15);
    \draw [red](-2.3,3) .. controls (-2.3,2.55) and (-1.95,2.2) .. (-1.5,2.2) .. controls (-1.05,2.2) and (-0.7,2.55) .. (-0.7,3);
    \draw [red](-4,1.3) .. controls (-3.55,1.3) and (-3.2,0.95) .. (-3.2,0.5) .. controls (-3.2,0.05) and (-3.55,-0.3) .. (-4,-0.3);
    \draw [red](-2.3,-2) .. controls (-2.3,-1.55) and (-1.95,-1.2) .. (-1.5,-1.2) .. controls (-1.05,-1.2) and (-0.7,-1.55) .. (-0.7,-2);
    \draw [red](1,-0.3) .. controls (0.55,-0.3) and (0.2,0.05) .. (0.2,0.5) .. controls (0.2,0.95) and (0.55,1.3) .. (1,1.3);
    \draw [red](-4,2) .. controls (-3.45,2) and (0,-1.45) .. (0,-2);
    \draw [red](-3,3) .. controls (-3,2.45) and (0.45,-1) .. (1,-1);
    \draw [red](-4,2.5) .. controls (-3.55,2.5) and (0.5,-1.55) .. (0.5,-2);
    \draw [red](-3.5,3) .. controls (-3.5,2.55) and (0.55,-1.5) .. (1,-1.5);
    \draw [red](-4,1.75) .. controls (-3.4,1.75) and (-0.25,-1.4) .. (-0.25,-2);
    \draw [red](-2.75,3) .. controls (-2.75,2.4) and (0.4,-0.75) .. (1,-0.75);
    \draw [red](-4,2.25) .. controls (-3.5,2.25) and (0.25,-1.5) .. (0.25,-2);
    \draw [red](-3.25,3) .. controls (-3.2,2.5) and (0.5,-1.25) .. (1,-1.25);
    \draw [red](-4,3) -- (1,-2);
    \draw [mygreen, very thick](-1.5,1.7) node[fill, circle, inner sep=1.5pt] {} -- (-1.5,-0.7) node[fill, circle, inner sep=1.5pt] {};
    \node [mygreen] at (-1.1,1.5) {$\gamma_\alpha$};
    \end{scope}
    \end{tikzpicture}
    \caption{Left: $\sfx_\alpha^\tri > 0$. Right: $\sfx_\alpha^\tri < 0$.}
    \label{fig:lam_in_quad}
\end{figure}

\begin{thm}[\cite{FG07,BB}]\label{thm:lam_shear}
For any ideal triangulation $\tri$ of $\Sigma$, the map
\begin{align*}
\boldsymbol{\sfx}^{\tri}:\eML(F)\to \bR^{\tri},\quad
(G,\mu) \mapsto (\sfx_\alpha^{\tri}(G,\mu))_{\alpha \in \tri}
\end{align*}
is bijective.
Moreover, the coordinate transformation $\boldsymbol{\sfx}^{\tri'} \circ (\boldsymbol{\sfx}^{\tri})^{-1}$ associated with a flip is given as in \cref{f:tropical x-flip}.
More accurately, it coincides with the max-plus tropicalization of the cluster $\cX$-transformation.
\end{thm}

\begin{figure}[h]
\[\hspace{1.4cm}
\begin{tikzpicture}[scale=0.7]
\path(0,0) node [fill, circle, inner sep=1.6pt] (x1){};
\path(135:4) node [fill, circle, inner sep=1.6pt] (x2){};
\path(0,4*1.4142) node [fill, circle, inner sep=1.6pt] (x3){};
\path(45:4) node [fill, circle, inner sep=1.6pt] (x4){};
\draw[blue](x1) to node[midway,left,black]{$\sfx_4$} (x2) 
to node[midway,left,black]{$\sfx_1$} (x3) 
to node[midway,right,black]{$\sfx_2$} (x4) 
to node[midway,right,black]{$\sfx_3$} (x1) 
to node[midway,left,black]{$\sfx_\kappa$} (x3);

\draw[-implies, double distance=2pt](4,2*1.4142) to node[midway,above]{$f_{\kappa}$} (6,2*1.4142);

\begin{scope}[xshift=10cm]
\path(0,0) node [fill, circle, inner sep=1.6pt] (x1){};
\path(135:4) node [fill, circle, inner sep=1.6pt] (x2){};
\path(0,4*1.4142) node [fill, circle, inner sep=1.6pt] (x3){};
\path(45:4) node [fill, circle, inner sep=1.6pt] (x4){};
\draw[blue](x1) to node[midway,left,black]{\scalebox{0.8}{$\sfx_4-\max\{0,-\sfx_\kappa\}$}} (x2) 
to node[midway,left,black]{\scalebox{0.8}{$\sfx_1+\max\{0,\sfx_\kappa\}$}} (x3) 
to node[midway,right,black]{\scalebox{0.8}{$\sfx_2-\max\{0,-\sfx_\kappa\}$}} (x4) 
to node[midway,right,black]{\scalebox{0.8}{$\sfx_3+\max\{0,\sfx_\kappa\}$}} (x1);
\draw[blue] (x2) to node[midway,above,black]{$-\sfx_\kappa$} (x4);
\end{scope}
\node [blue] at (0.25,2.1) {$\kappa$};
\node [blue] at (10.5,2.5) {$\kappa'$};
\end{tikzpicture}
\]
\caption{The coordinate transformation for a flip.
}
\label{f:tropical x-flip}
\end{figure}

Therefore, $\eML(F)$ is a PL manifold with the PL atlas consisting of the maps $\boldsymbol{\sfx}^\tri$ as global coordinates.

\begin{rem}\label{rem:self-folded}
When $\tri$ contains self-folded triangles, we need to slightly modify the definition of shear coordinates to make them match with the cluster transformations (cf. \cite[Definition 9.2]{AB}). For a self-folded edge $\alpha$ of $\tri$, we can still find a quadrilateral $\square_\alpha$ having $\alpha$  as a diagonal in a suitable covering of $F$ (see \cref{fig:c_p}). Then we define $\widetilde{\sfx}_\alpha^\tri$ by lifting the measured geodesic lamination and applying the same rule as above, which is clearly invariant under covering transformations. Then we define 
\begin{align*}
    \sfx_\gamma^\tri :=\begin{cases}
         \widetilde{\sfx}_\alpha^\tri + \sfx_\beta^\tri & \text{if $\gamma=\alpha$ is self-folded with encircling arc $\beta$}, \\
         \widetilde{\sfx}_\gamma^{\tri} & \text{otherwise}.
    \end{cases}
\end{align*}
With this modification, the shear coordinates $\{\sfx^\tri\}_\tri$ are shown to be related with each other by tropical cluster $\X$-transformations by the same argument as in \cite[Proposition 9.8]{AB}. 
\end{rem}

For each $p \in P$, we take a loop $c_p$ in $F$ parallel to $\partial_p$ and define
\begin{align*}
    \theta^\mathrm{ML}: \eML(F) \to \bR^P, \quad
    (G, \mu) \mapsto (\sigma_G(p) \cdot \mu(c_p))_{p \in P}\ .
\end{align*}

It is clear that $\ML_0(F) \cong (\theta^\mathrm{ML})^{-1}(0)$ from the definition of $\ML_0(F)$.
Namely, we have the exact sequence
\begin{align}\label{eq:lam_exact_seq}
    0 \to \ML_0(F) \to \eML(F) \xrightarrow{\theta^\mathrm{ML}} \bR^P \to 0
\end{align}
in the PL category.


\begin{lem}\label{lem:ML_Casimir}
For any ideal triangulation $\tri$ without self-folded triangles and $p \in P$, we have
\begin{align*}
    (\boldsymbol{\sfx}^\tri)^* \theta^\mathrm{ML}_p = \sum_{\substack{\alpha \in \tri\\ \text{incident to $p$}}} \sfx^\tri_\alpha,
\end{align*}
where $\alpha \in \tri$ is counted twice if both of its endpoints are incident to $p$. If $\tri$ contains a self-folded triangle around $p$, then $(\boldsymbol{\sfx}^\tri)^* \theta^\mathrm{ML}_p = \widetilde{\sfx}_\alpha^\tri=\sfx_\alpha^\tri - \sfx_\beta^\tri$ with the self-folded edge $\alpha$ incident to $p$ (recall \cref{rem:self-folded}).
\end{lem}

\begin{proof}
Let $\theta_p^\tri$ denote the right-hand side of the statement. 
Fix $p \in P$, and take an ideal triangulation $\tri_p$ of $\Sigma$ such that there is only one ideal arc $\alpha$ incident to $p$. Then $\alpha$ is a self-folded edge. 
Let $(G, \mu) \in \eML(F)$ and let $\gamma_\alpha$ be the subarc of $\alpha$ whose measure determines the shear coordinate of $(G,\mu)$ along $\alpha$.
Then, there is an arc $\delta_\alpha$ which connects the end points of $\gamma_\alpha$, its measure with respect to $\mu$ is zero and the union $\gamma_\alpha \cup \delta_\alpha$ is a loop parallel to $\partial_p$. See \cref{fig:c_p}.
It is easy to see that the sign of the shear coordinate $\widetilde{\sfx}^{\tri_p}_\alpha$ and the lamination signature $\sigma_G(p)$ are the same. Thus we get $(\boldsymbol{\sfx}^{\tri_p})^* \theta^\mathrm{ML}_p= \widetilde{\sfx}_\alpha^\tri = \theta_p^{\tri_p}$. 

The statement for a general ideal triangulation $\tri$ then follows from the fact that the expression in the right-hand side is invariant under the mutations. 
Indeed, $\theta_p^\tri$ is the Casimir function associated with $p$, and hence the tropical cluster $\X$-transformation from $\tri$ to $\tri'$ sends $\theta_p^\tri$ to $\theta_p^{\tri'}$. Thus the statement holds for general $\tri$. 
\end{proof}

\begin{figure}[ht]
    \centering
    \begin{tikzpicture}
\draw[blue] (0,0.5) node [draw, black, fill=white, circle, inner sep=2pt] {$\partial_p$} -- (0,-1.5) node (v) [draw, black, fill=white, circle, inner sep=5pt] {};
\draw[blue] (v) .. controls (-1.5,-0.5) and (-1.5,2) .. (0,2) .. controls (1.5,2) and (1.5,-0.5) .. (v);
\draw [red] (0.5,-1.55) .. controls (0.25,-1.05) and (-0.25,-1.05) .. (-0.5,-1.55);
\draw [red](0.75,-1.45) .. controls (0.4,-0.9) and (-0.4,-0.9) .. (-0.75,-1.45);
\draw [red](1,-1.3) .. controls (0.5,-0.7) and (-0.5,-0.7) .. (-1,-1.3);
\draw [red](-1.25,-1.1) .. controls (-0.5,-0.55) and (0.5,-0.75) .. (1.5,-0.3);
\draw [red](1,2.55) .. controls (1.2,1.35) and (1.1,-0.45) .. (0,-0.45) .. controls (-0.95,-0.45) and (-0.95,1.2) .. (0,1.2) .. controls (0.95,1.2) and (0.65,-0.55) .. (-0.35,0.1);
\draw [red](0.5,2.55) .. controls (0.8,1.35) and (1.05,-0.25) .. (0,-0.25) .. controls (-0.7,-0.25) and (-0.75,1) .. (0,1) .. controls (0.7,1) and (0.65,-0.25) .. (-0.25,0.1);
\draw [cyan!60, very thick](0,-0.55) .. controls (-1.1,-0.55) and (-1.1,1.35) .. (0,1.35) .. controls (0.95,1.35) and (0.85,-0.15) .. (0,-0.15);
\draw [mygreen, very thick](0,-0.55) node [draw, fill, circle, inner sep=1.3pt] {} -- (0,-0.15) node [draw, fill, circle, inner sep=1.3pt] {};
\node [cyan!60] at (0,1.6) {$\delta_\alpha$};
\draw [dashed, ->](1.7,0.3) .. controls (1.3,0.1) and (0.45,-0.15) .. (0.05,-0.35);
\node [mygreen] at (1.95,0.35) {$\gamma_\alpha$};
\node [blue] at (-1.1,1.6) {$\beta$};
\end{tikzpicture}
\qquad
\begin{tikzpicture}
\draw [blue](0,1.5) node [draw, fill, black, circle, inner sep=1.3pt] (v1) {} -- (0,-1.5) node [draw, fill, black, circle, inner sep=1.3pt] (v2) {} -- (-1.5,0) node [draw, fill, black, circle, inner sep=1.3pt] {} -- (v1) -- (1.5,0) node [draw, fill, black, circle, inner sep=1.3pt] {} -- (v2);
\node[blue] at (-1,1) {$\alpha$};
\node[blue] at (1,1) {$\alpha$};
\node[blue] at (0.15,0.25) {$\alpha$};
\node[blue] at (-1,-1) {$\beta$};
\node[blue] at (1,-1) {$\beta$};
\draw [red](-0.95,0.55) -- (0.85,-0.65);
\draw [red](-0.8,0.7) -- (1,-0.5);
\draw [red](-0.65,0.85) .. controls (-0.2,0.4) and (0.2,0.4) .. (1,0.5);
\draw [red](-0.5,1) .. controls (-0.15,0.65) and (0.15,0.6) .. (0.85,0.65);
\draw [red](-0.4,1.1) .. controls (-0.15,0.85) and (0.15,0.8) .. (0.65,0.85);
\node [red, rotate=10] at (0.1,1.25) {$\vdots$};
\draw [red](0.7,-0.8) .. controls (0,-0.3) and (-0.5,0) .. (-1,-0.5);
\draw [red](-0.85,-0.65) .. controls (-0.35,-0.3) and (0.25,-0.65) .. (0.55,-0.95);
\draw [red](-0.65,-0.85) .. controls (-0.2,-0.65) and (0.15,-0.85) .. (0.4,-1.1);
\draw [red](-0.45,-1.05) .. controls (-0.1,-0.95) and (0.1,-1.05) .. (0.25,-1.25);
\end{tikzpicture}
    \caption{Left: green line shows $\gamma_\alpha$ and cyan line shows $\delta_\alpha$; right: the quadrilateral whose diagonal is $\alpha$.}
    \label{fig:c_p}
\end{figure}

We are going to characterize several properties of measured geodesic laminations in terms of the shear coordinates.
A measured geodesic lamination $(G,\mu)$ is said to be \emph{filling} if its complementary regions are (once-punctured) polygons. 
A filling measured geodesic lamination is said to be \emph{generic} if its unpunctured complementary regions are trigon.
Then we have the following:

\begin{prop}\label{p:shear_non-vanishing}
Let $\Sigma$ be a punctured surface, and $(F,f)$ a marked hyperbolic surface of type $\Sigma$. 
For a measured geodesic lamination $(G, \mu) \in \eML(F)$ on $F$, we have the following.  
\begin{enumerate}
    \item If $(G, \mu)$ is generic filling, then it satisfies ${\sfx}_\alpha^\tri(G, \mu) \neq 0$ for any ideal triangulation $\tri$ and any arc $\alpha \in \tri$.
    \item If $(G,\mu)$ is filling but non-generic, then it satisfies ${\sfx}_\alpha^\tri(G,\mu) = 0$ for some ideal triangulation $\tri$ and an arc $\alpha \in \tri$.
    \item If the support $G$ consists of closed geodesics, then it satisfies $\sfx_\alpha^\tri(G, \mu) = 0$ for some ideal triangulation $\tri$ and an arc $\alpha \in \tri$.
\end{enumerate}
\end{prop}

\begin{proof} 
For a filling geodesic lamination $G$, let $\cF_G$ denote the singular foliation on $\Sigma$ obtained by extending $G$ so that each complementary $k$-gon  is filled by a foliation with one $k$-pronged singularity.

Recall the Euler--Poincar\'e formula  (see, for instance, \cite[Proposition 5.1]{FLP}) for a singular foliation $\cF$:
\begin{align*}
    2\chi(\Sigma)=\sum_{x \in \mathrm{Sing}\cF} (2-v(x)).
\end{align*}
Here, $\mathrm{Sing}\,\cF$ denotes the set of singular points of $\cF$, and $v(x)$ is the number of the separatorices at $x$.

(1): If $(G,\mu)$ is generic filling, then $v(x)=3$ for all $x \in\mathrm{Sing}\, \cF_G$. 
In particular 
\begin{align*}
    \# \mathrm{Sing}\,\cF_G =-2\chi(\Sigma),
\end{align*}
which is exactly the number of triangles in any ideal triangulation $\tri$ of $\Sigma$. Therefore we can take the representatives of the arcs of $f(\tri)$ so that each triangle contains exactly one singular point.
For each quadrilateral $\square_\alpha$, the genericity implies that there is no singular leaf (a \emph{saddle connection}) connecting the two singular points on the triangles in $\square_\alpha$. 
Thus, $G \cap \square_\alpha$ 
takes either configuration shown in the left or right of \cref{fig:lam_in_quad}, which implies $\sfx_\alpha^\tri(G,\mu)\neq 0$.

(2): Suppose that $(G,\mu)$ is non-generic.
We can resolve each singular point $x$ of $\cF_G$ with $v(x)>3$ to a collection of $v(x)-2$ generic singular points connected by saddle connections by applying a sequence of the inverse of Whitehead collapsing in an arbitrary direction. Let $\cF_G'$ be the resulting singular foliation. See \cref{fig:fol_sing_resol}.
Then we can take representatives of the arcs of $f(\tri)$
so that each of its triangle contains exactly one singular point of $\cF'_G$.
Then we have $\sfx_\alpha^\tri(G,\mu)=0$
exactly when the edge $f(\alpha)$ is dual to one of the saddle connections in $\cF'_G$, since there are only corner arcs in $G \cap \square_\alpha$ in such a situation.

\begin{figure}[h]
    \centering
    \begin{tikzpicture}
    \draw [very thick](-3.5,2) -- (-2.5,0.5) -- (-4,-0.5);
    \draw [very thick](-1.5,2) -- (-2.5,0.5) coordinate (v1) {} -- (-1,0);
    \draw [very thick](v1) -- (-2,-1);
    \draw [very thick](2.5,2) -- (3.05,0.5) coordinate (v2) {} -- (2,-0.5);
    \draw (-3.15,2) .. controls (-2.5,1) and (-2.5,1) .. (-1.85,2);
    \draw (-3.6,1.5) .. controls (-3,0.5) and (-3,0.5) .. (-3.9,0);
    \draw (-3.6,-0.6) .. controls (-2.6,0.1) and (-2.6,0.1) .. (-2.3,-1);
    \draw (-1.8,-0.85) .. controls (-2.15,0.15) and (-2.15,0.15) .. (-1.15,-0.2);
    \draw (-1.05,0.35) .. controls (-2,0.6) and (-2,0.6) .. (-1.4,1.6);
    \draw (-2.85,2) .. controls (-2.55,1.7) and (-2.4,1.7) .. (-2.15,2);
    \draw (-3.7,1) .. controls (-3.5,0.75) and (-3.55,0.55) .. (-3.85,0.45);
    \draw (-3.25,-0.7) .. controls (-2.95,-0.5) and (-2.75,-0.55) .. (-2.65,-0.9);
    \draw (-1.6,-0.7) .. controls (-1.7,-0.35) and (-1.6,-0.25) .. (-1.3,-0.35);
    \draw (-1.25,1.25) .. controls (-1.45,0.95) and (-1.4,0.8) .. (-1.1,0.7);
    
    \draw [squigarrow, very thick] (0,0.5) -- (1.5,0.5);
    
    \draw [very thick](5,-1) -- (4.5,0.15) coordinate (v3) {} -- (6,0);
    \draw [very thick](v2) -- (3.85,0.5) coordinate (v4) {} -- (v3);
    \draw [very thick](v4) -- (4.5,2);
    \draw (2.85,2) .. controls (3.2,0.65) and (3.75,0.65) .. (4.15,2);
    \draw (2.4,1.5) .. controls (2.75,0.55) and (2.75,0.55) .. (2.05,0);
    \draw (2.5,-0.6) .. controls (3.25,0.45) and (4.25,0.25) .. (4.55,-0.95);
    \draw (5.25,-0.9) .. controls (4.85,-0.1) and (4.85,-0.1) .. (5.85,-0.25);
    \draw (5.65,0.4) .. controls (4.4,0.45) and (4.15,0.5) .. (4.8,1.7);
    \draw (3.15,2) .. controls (3.4,1.5) and (3.6,1.5) .. (3.85,2);
    \draw (2.3,1) .. controls (2.4,0.75) and (2.35,0.55) .. (2.1,0.4);
    \draw (3.1,-0.7) .. controls (3.45,-0.3) and (3.9,-0.45) .. (4.05,-0.9);
    \draw (5.4,-0.8) .. controls (5.35,-0.5) and (5.45,-0.4) .. (5.7,-0.45);
    \draw (5,1.4) .. controls (4.7,0.9) and (4.85,0.7) .. (5.4,0.75);
    \node at (-4.5,1.5) {$\cF_G$};
    \node at (5.7,1.5) {$\cF'_G$};
    \end{tikzpicture}
    \caption{A resolution of a 5-pronged singularity into 3-pronged singularities.}
    \label{fig:fol_sing_resol}
\end{figure}

(3):
Let $(G, \mu) \in \ML_0(F)$ be a compact rational measured geodesic lamination 
and $\tri$ be an ideal triangulation of $\Sigma$.
We rescale $(G, \mu)$ by the lcm $u$ of the denominators of irreducible expressions of the shear coordinates $\sfx^\tri_\alpha(G, \mu)$, $\alpha \in \tri$.
Then, all the shear coordinates of the rescaled measured geodesic lamination $(G, u \mu)$ are integral.
We take $|\sfx_\alpha(G, u \mu)|$ points on $\alpha$ for each $\alpha \in \tri$.
If necessary adding the points on ideal arcs of $\tri$, we connect them like in the configuration of \cref{fig:lam_in_quad} according to the shear coordinate $\boldsymbol{\sfx}^\tri(G, u \mu)$ and we get multicurve $C$ of loops on $\Sigma$.
Then, the geodesic lamination $G_C$ isotopic to $f(C)$ with the transverse measure given by the geometric intersection number is nothing but $(G, u \mu)$ since these have the same shear coordinate.

Let us take a pants decomposition $\cR$ of $\Sigma$ which contains $C$. 
Then there is a component $P$ of $\Sigma \setminus \bigcup \cR$ which contains some punctures.
In particular, one can find an ideal arc $\alpha \subset P \subset \Sigma$ both of whose endpoints are the punctures in $P$. See \cref{fig:arc_in_pants}. Let $\tri$ be an ideal triangulation containing $\alpha$. 
Then we have $\sfx_\alpha(G, \mu) = 0$
since $C$ and $\alpha$ are disjoint.
\begin{figure}[h]
    \centering
    \begin{tikzpicture}
    \draw (4,-3) arc [start angle=-90, end angle=90, x radius=.4cm, y radius=1cm];
    \draw [dashed] (4,-1) coordinate (v1) {} arc [start angle=90, end angle=270, x radius=.4cm, y radius=1cm];
    \draw  (0,-2) ellipse (0.4 and 1);
    \draw (0,-3) .. controls (1.15,-3) and (2.85,-3) .. (4,-3);
    \draw (0,-1) -- (v1);
    \draw [blue] (2,-3) arc [start angle=-90, end angle=90, x radius=.4cm, y radius=1cm];
    \draw [dashed, blue] (2,-1) coordinate (v1) {} arc [start angle=90, end angle=270, x radius=.4cm, y radius=1cm];
    \node[fill, circle, inner sep=1.3] at (2.35,-1.5) {};
    \node [blue] at (2.7,-2) {$\alpha$};
    
    
    \node at (6.25,-2) {$\oot\ \, P\, \ \too$};

    \draw  (10,-2) ellipse (2 and 1.5);
    \node[fill, circle, inner sep=1.3] (v1) at (9,-2) {};
    \node[fill, circle, inner sep=1.3] (v2) at (11,-2) {};
    \draw[blue]  (v1) edge (v2);
    \node[blue] at (10,-1.73) {$\alpha$};
    \end{tikzpicture}
    \caption{Examples of the ideal arc $\alpha$.}
    \label{fig:arc_in_pants}
\end{figure}
\end{proof}

\begin{dfn}\label{def:X-filling}
We say that a measured geodesic lamination $(G,\mu) \in \eML(F)$ on a hyperbolic surface $F$ with geodesic boundary is \emph{$\X$-filling} if it satisfies $\sfx_\alpha^\tri(G,\mu) \neq 0$ for any ideal triangulation $\tri$ and any arc $\alpha \in \tri$.
\end{dfn}
By \cref{p:shear_non-vanishing} (1), a generic filling measured geodesic lamination is $\X$-filling.

\subsection{Nielsen--Thurston classification of mapping classes}\label{subsec:NT_classif}

\begin{dfn}
    Let $MC(\Sigma)$ be the \emph{mapping class group} of $\Sigma$, which consists of the isotopy classes of diffeomorphisms of $\Sigma$ which preserve the set $M$ of marked points. 
    If $\Sigma$ is a once-punctured torus, then we redefine $MC(\Sigma)$ as the quotient of that group by the hyperelliptic involution. 
    An element of $MC(\Sigma)$ is called a \emph{mapping class}. 
\end{dfn}

\begin{ex}
\begin{enumerate}
    \item For a simple closed curve $C \subset \Sigma$, we have the associated \emph{(right-hand) Dehn twist} $T_C \in MC(\Sigma)$ which takes another curve that intersects with $C$ once along $C$ in the right direction. See \cref{fig:example_Dehn} for an example.
    \item For a boundary component $\partial \subset \partial \Sigma$ with special points $m_1,\dots,m_k$ in this counter-clockwise order, there is a mapping class that rotates $\partial$ and takes $m_i$ to $m_{i+1}$ mod $k$. Its $k$-th power is the same as the Dehn twist along the curve $C$ isotopic to $\partial$. 
    \item For a simple arc $\alpha$ connecting two punctures $p_1,p_2$, there is a mapping class called the \emph{half-twist} that preserves $\alpha$ but interchanging $p_1$ and $p_2$ by the counter-clockwise rotation. Similarly, a braiding of two boundary components with the same number of marked points can be considered. 
\end{enumerate}
\end{ex}

Given a marked hyperbolic surface $(F,f)$ of type $\Sigma$, similarly we define the mapping class group $MC(F,\partial F)$, where the diffeomorphisms and their isotopies are required to preserve $\partial F$. 

The mapping class group $MC(F,\partial F)$ naturally acts on $\eML(F)$ by 
\begin{align*}
    \phi.(G,\mu):=(\phi_*(G), \phi_*(\mu))
\end{align*}
for $(G, \mu) \in \eML(F)$ and $\phi \in MC(F,\partial F)$. Here for $\phi=[\varphi] \in MC(F,\partial F)$, $\phi_*(G)$ denotes the geodesic lamination isotopic to $\varphi(G)$ with isotopy $h$, $\phi_\ast(\mu)$ is the push-forward of the measure $\mu$ by $h \circ \varphi$. 
The marking homeomophism induces an isomorphism $MC(\Sigma) \xrightarrow{\sim} MC(F,\partial F)$, $\phi \mapsto \phi^f:=f \phi f^{-1}$, through which $MC(\Sigma)$ acts on $\eML(F)$.  

\begin{rem}
For two marked hyperbolic surfaces $(F,f), (F',f')$ of type $\Sigma$, the spaces $\eML(F)$ and $\eML(F')$ are canonically isomorphic to each other so that the shear coordinates and the mapping class group action are compatible. Indeed, one can consider the bundle over the enhanced \Teich\ space whose fibers are the spaces $\eML(F)$, which admits an $MC(\Sigma)$-equivariant trivialization. See \cite[Appendix A]{AIK24}.
\end{rem}

\begin{prop}\label{prop:lamination_split_action}
Let
$\sigma_\Sigma: MC(\Sigma) \to \mathfrak{S}_P$
be the permutation action of the mapping class group on the punctures.
Then, for a marked hyperbolic surface $(F,f)$ of type $\Sigma$ we have the following commutative diagram
\begin{equation*}
    \begin{tikzcd}
    \eML(F) \ar[r,"\phi"] \ar[d,"\theta^\mathrm{ML}"'] & \eML(F) \ar[d,"\theta^\mathrm{ML}"] \\
    \bR^P \ar[r,"\sigma_\Sigma(\phi)"']& \bR^P
    \end{tikzcd}
\end{equation*}
for all $\phi \in MC(\Sigma)$.
\end{prop}

\paragraph{\textbf{Nielsen--Thurston classification}}
Let $\Sigma$ be a punctured surface.
Note that the group $\bR_{>0}$ acts on $\ML_0(F)$ by multiplying a common factor to the measures of transverse arcs. 
A mapping class $\phi \in MC(\Sigma)$ is said to be
\begin{itemize}
    \item \emph{periodic} if it is finite order;
    \item \emph{reducible} if it fixes a \emph{multicurve}, which is the isotopy class of a collection of mutually disjoint simple closed curves in $\Sigma$ neither peripheral nor contractible;
    \item \emph{pseudo-Anosov} (\lq\lq \emph{pA}" for short) if there exists a real number $\lambda >1$ and two filling compact measured geodesic laminations $(G_\pm,\mu_\pm)$ on $\Sigma$ which are
    transverse to each other
    and satisfying $\phi^{\pm 1}(G_\pm,\mu_\pm) = \lambda^{\pm 1}(G_\pm, \mu_\pm)$.
\end{itemize}
We call the pair $((G_+,\mu_+), (G_-,\mu_-))$ the \emph{pA pair} of $\phi$.
Also, a pA mapping class is said to be \emph{generic} if one of (or equivalently, both of) its pA pair is generic\footnote{This is equivalent to the condition that the meromorphic quadratic differential which gives the pA pair has only simple zeros. See, for instance, \cite{BS15}.}.
The numerical invariant $\lambda>1$ associated with a pA mapping class $\phi$ is called the \emph{(pseudo-Anosov) stretch factor}.

\begin{thm}[Nielsen--Thurston classification, e.g. \cite{FLP}]\label{t:NTclassification}
Each mapping class on a punctured surface is either periodic, reducible or pA. Moreover, a pA mapping class is neither periodic nor reducible. 
\end{thm}

Indeed, this classification theorem is based on Brouwer's fixed point theorem, applied to the action of a mapping class on the Thurston compactification of the \Teich\ space of $\Sigma$. The boundary is given by the space $\bP\ML_0(F):=(\ML_0(F) \setminus \{0\})/\bR_{>0}$ of projective measured geodesic laminations.

\begin{ex}
When $\Sigma$ is a once-punctured torus, the mapping class group is isomorphic to $PSL_2(\bZ)=SL_2(\bZ)/\{\pm 1\}$, where the quotient by $\{\pm 1\}$ corresponds to that by the hyperelliptic involution. In this case, the Nielsen--Thurston classification boils down to the well-known trichotomy elliptic/parabolic/hyperbolic for M\"obius transformations which act on the compactified upper half plane $\overline{\mathbb{H}^2}= \mathbb{H}^2 \cup S^1_\infty$. The space $\bP\ML_0(F)$ is identified with $S^1_\infty=\bR \cup \{\infty\}$, where filling laminations correspond to irrational numbers. 
\end{ex}
\section{Definition of sign stability}\label{sec:sign_stability}

\subsection{Combinatorial description of the mapping class group action}
For any marked surface $\Sigma$, let us use the index set
\begin{align*}
    I &= I(\Sigma) :=
    \{1, \dots, 3(2g-2+h+b)+\spe\}.
\end{align*} 
Recall that $|I|$ is the number of (internal) edges in any ideal triangulation of $\Sigma$.

\subsubsection{Combinatorics of ideal triangulations}

For an ideal triangulation $\tri$ {without self-folded triangles}, we define a skew-symmetric matrix $B^{\tri} = (b_{\alpha \beta}^\tri)_{\alpha, \beta \in \tri}$ by
\begin{align*}
    b_{\alpha \beta}^\tri = 
    \begin{cases}
        1 & \text{if $\alpha$ and $\beta$ form an angle with $\alpha$ following $\beta$ in the clockwise order,}\\
        -1  & \text{if the same holds with the counter-clockwise order,}\\
        0 & \text{otherwise.}
    \end{cases}
\end{align*}

The quiver representing the matrix $B^{\tri}$ locally looks like the following:
\begin{align*}
\tikz[scale=1.15,xscale=-1,>=latex]{
\path(0,0) node [fill, circle, inner sep=1.5pt] (x1){};
\path(135:2) node [fill, circle, inner sep=1.5pt] (x2){};
\path(0,2*1.4142) node [fill, circle, inner sep=1.5pt] (x3){};
\path(45:2) node [fill, circle, inner sep=1.5pt] (x4){};
\draw[blue](x1) to (x2) to (x3) to (x4) to (x1) to  (x3);
\color{mygreen}{
    \draw(0,1.4142) circle(2pt) coordinate(v0);
    \draw(135:2)++(45:1) circle(2pt) coordinate(v1);
    \draw(45:2)++(135:1) circle(2pt) coordinate(v2);
    \draw(45:1) circle(2pt) coordinate(v3);
    \draw(135:1) circle(2pt) coordinate(v4);
    \qarrow{v0}{v4}
    \qarrow{v4}{v1}
    \qarrow{v1}{v0}
    \qarrow{v0}{v2}
    \qarrow{v2}{v3}
    \qarrow{v3}{v0}
    }
}
\end{align*}

Let $\Tri(\Sigma)$ denote the graph whose vertices are ideal triangulations {without self-folded triangles} of $\Sigma$ and adjacent ideal triangulations are related by a flip. This is the graph version of the \emph{Ptolemy groupoid} \cite{Penner}, and is known to be connected \cite{FST}. 

A \emph{labeled triangulation} consists of an ideal triangulation $\tri$ and a bijection
$\ell: I \to \tri$, called a \emph{labeling}.
It allows us to promote the exchange matrix to be a matrix
$B^{(\tri, \ell)} = (b^{(\tri, \ell)}_{ij})_{i,j \in I}$ indexed by the set $I$, where $b^{(\tri, \ell)}_{ij} := b^\tri_{\ell(i), \ell(j)}$. Similarly, we get a coordinate system 
$\bsfx^{(\tri, \ell)}: \eML(F) \to \bR^{I}$ indexed by $I$ by setting $\sfx^{(\tri,\ell)}_i:= \sfx^\tri_{\ell(i)}$. This setting is useful to keep track of the mapping class group action correctly. 

Flips of ideal triangulations can be upgraded to \emph{labeled flips} 
\begin{align}\label{eq:labeled flip}
\mu_k: (\tri,\ell) \to (\tri',\ell')
\end{align}
for $k \in I$, where $\tri'$ is the ideal triangulation obtained by the flip along the arc $\ell(k)$, and $\ell'$ is defined by $\ell'(i):=\ell(i)$ for $i \neq k$ and by setting $\ell'(k)$ to be the new arc arising from the flip.

Let $\bTri(\Sigma)$ denote the graph whose vertices are labeled triangulations of $\Sigma$ and adjacent labeled triangulations are related by a labeled flip or the action of a transposition of labelings in $\fS_I$.
Here the action of $\fS_{I}$ on a labeling $\ell: I \to \tri$ is defined as $\sigma.\ell(i) := \ell(\sigma^{-1}(i))$ for $\sigma \in \fS_{I}$ and $i \in I$.
We will refer to an edge corresponding to a labeled flip (resp. the action of a transposition on a labeling) as a \emph{horizontal} (resp. \emph{vertical}) edge.

\subsubsection{Representation paths of mapping classes}
The mapping class group $MC(\Sigma)$ acts on the graph $\bTri(\Sigma)$ as $(\tri, \ell) \mapsto \phi(\tri, \ell) := (\phi(\tri), \phi \circ \ell)$.
We remark that this action preserves the edge labelings of $\bTri(\Sigma)$.

\begin{defi}\label{d:rep_path}
An \emph{representation path} of a mapping class $\phi \in MC(\Sigma)$ is an edge path on $\bTri(\Sigma)$ from a vertex $(\tri,\ell)$ to $\phi^{-1}(\tri, \ell)$.
Then, we also say that the path $\gamma$ represents the mapping class $\phi$.
\end{defi}

\begin{rem}
It is clear that $B^{(\tri, \ell)} = B^{\phi(\tri, \ell)}$ .
Furthermore, one can verify that an edge path $\gamma: (\tri, \ell) \to (\tri', \ell')$ represents some mapping class iff $B^{(\tri, \ell)} = B^{(\tri', \ell')}$.
(\emph{cf}. \cite[Section 2]{IK19} and \cite{FST}.)
In a general cluster algebra, the elements of the \emph{cluster modular group} are defined by these properties (see \cref{sec:cluster_modular_group}). 
\end{rem}

We denote by $\pi: \bTri(\Sigma) \to \Tri(\Sigma)$ the graph contraction that shrinks all the vertical edges followed by the identification of the horizontal edges with the same underlying pair of ideal triangulations, forgetting the labelings for the vertices and the edges. See \cref{fig:pent}.  
For an edge path $\gamma$ on $\bTri(\Sigma)$, let $\pi(\gamma)$ be its image. If $\gamma$ consists only of vertical edges, then $\pi(\gamma)$ is a constant path.  

\begin{conv}
\begin{itemize}
    \item For two edge paths $\gamma:(\tri_0,\ell_0) \to (\tri_1,\ell_1)$ and $\gamma':(\tri_1,\ell_1) \to (\tri_2,\ell_2)$, we denote their concatenation by $\gamma\ast \gamma':(\tri_0,\ell_0) \to (\tri_2,\ell_2)$. 
    \item For an edge path 
\begin{align*}
    \gamma: (\tri_0, \ell_0) \overbar{k_0} (\tri_1, \ell_1) \overbar{k_1} \cdots \overbar{k_{h-1}} (\tri_h, \ell_h)
\end{align*}
only with horizontal edges, 
we abbreviate it as $\gamma: (\tri_0, \ell_0) \xrightarrow{k_0, \dots, k_{h-1}} (\tri_h, \ell_h)$. Its contraction is written as 
\begin{align*}
    \pi(\gamma): \tri_0 \overbar{\kappa_0} \tri_1 \overbar{\kappa_1} \cdots \overbar{\kappa_{h-1}} \tri_h
\end{align*}
with abbreviation $\pi(\gamma): \tri_0 \xrightarrow{\kappa_0, \dots, \kappa_{h-1}} \tri_h$, 
where $\tri_i \overbar{\kappa_i} \tri_{i+1}$ is the underlying flip of $(\tri_i, \ell_i) \overbar{k_i} (\tri_{i+1}, \ell_{i+1})$ with $\kappa_i \in \tri_i$. 
Similarly, an edge path only with vertical edges
\begin{align*}
    \gamma: (\tri_0, \ell_0) \overbar{\sigma_0} (\tri_1, \ell_1) \overbar{\sigma_1} \cdots \overbar{\sigma_{n-1}} (\tri_n, \ell_n)
\end{align*}
 is abbreviated as $\gamma: (\tri_0, \ell_0) \xrightarrow{\sigma_{n-1} \cdots \sigma_0} (\tri_n, \ell_n)$.
 \item The cyclic permutation $k_1 \mapsto k_2 \mapsto \dots \mapsto k_n \mapsto k_1$ is denoted by the symbol $(k_1\ k_2\ \cdots\ k_n)$.
\end{itemize}
\end{conv}


\begin{rem}\label{rem:product_path}
If $\gamma:(\tri_0,\ell_0) \to \phi^{-1}(\tri_0,\ell_0)$ and $\delta:(\tri_0,\ell_0) \to \psi^{-1}(\tri_0,\ell_0)$ represent mapping classes $\phi,\psi \in MC(\Sigma)$, respectively, then the concatenation $\gamma\ast \phi^{-1}(\delta):(\tri_0,\ell_0) \to \phi^{-1}(\psi^{-1}(\tri_0,\ell_0))$ represents $\psi\phi$. 
\end{rem}



\begin{ex}\label{ex:pentagon}
Let us consider a disk with 5 punctures on its boundary.
Then $I = \{1,2\}$.
The graph $\bTri(\Sigma)$ is shown in \cref{fig:pent}.
Focus on the labeled triangulation $(\tri, \ell)$ on the upper-left vertex.
The edge path $(\tri, \ell) \overbar{2} (\tri_1, \ell_1) \overbar{(1\ 2)} (\tri_2, \ell_2)$ represents the mapping class $\phi$ corresponding to the $4\pi/5$ rotation.
Check that $(\tri_2, \ell_2) = \phi^{-1}(\tri, \ell)$ in \cref{fig:pent}.
The path $(\tri, \ell) \overbar{(1\ 2)} (\tri_3, \ell_3) \overbar{1} \phi^{-1}(\tri, \ell)$ also represents $\phi$.
Both of them project to the same path on $\Tri(\Sigma)$ of length $1$.
The square $\phi^2$ admits a horizontal representation path $(\tri,\ell) \xrightarrow{1,2} \phi^{-2}(\tri,\ell)$. 
\begin{figure}[h]
    \centering
    \begin{tikzpicture}
    \draw(4*0.34202,0.5*0.93969) coordinate(P+);
    \draw(-4*0.34202,0.5*0.93969) coordinate(P-);
    \begin{scope}[yshift=-2cm]
    \draw(4*0.34202,0.5*0.93969) coordinate(Q+);
    \draw(-4*0.34202,0.5*0.93969) coordinate(Q-);
    \end{scope}
    \draw(P+) arc[x radius=4cm, y radius=0.5cm, start angle=70, end angle=-250];
    \draw(Q+) arc[x radius=4cm, y radius=0.5cm, start angle=70, end angle=-250];
    \draw[dashed] (P+) to[out=210, in=0] (Q-);
    \draw[dashed] (Q+) to[out=150, in=0] (P-);
    \fill (-4,0) circle(2pt) coordinate(A1);
    \fill (4,0) circle(2pt) coordinate(A2);
    \fill (0,-0.5) circle(2pt) coordinate(A3);
    \fill (-4*0.707,-0.5*0.707) circle(2pt) coordinate(A4);
    \fill (4*0.707,-0.5*0.707) circle(2pt) coordinate(A5);
    \begin{scope}[yshift=-2cm]
    \fill (-4,0) circle(2pt) coordinate(B1);
    \fill (4,0) circle(2pt) coordinate(B2);
    \fill (0,-0.5) circle(2pt) coordinate(B3);
    \fill (-4*0.707,-0.5*0.707) circle(2pt) coordinate(B4);
    \fill (4*0.707,-0.5*0.707) circle(2pt) coordinate(B5);
    \end{scope}
    \foreach \i in {1,2,3,4,5}
    \draw[thick](A\i) -- (B\i);
    \draw(-4*0.5,-0.5*0.8660-2) node[below right]{$2$};
    \draw(4*0.5,-0.5*0.8660-2) node[below left]{$1$};
    \draw(-4*0.5,0.5*0.8660) node[above right]{$1$};
    \draw(-4,-1) node[left]{$(1\ 2)$};
    \draw[->] (0,-3.1) -- (0,-4.5);
    \draw (0,-6) ellipse (4 and 1);
    \node [fill, circle, inner sep=1.5pt] at (-4,-6) {};
    \node [fill, circle, inner sep=1.5pt] at (-2.85,-6.7) {};
    \node [fill, circle, inner sep=1.5pt] at (0,-7) {};
    \node [fill, circle, inner sep=1.5pt] at (2.85,-6.7) {};
    \node [fill, circle, inner sep=1.5pt] at (4,-6) {};
    \node at (-0.3,-3.8) {$\pi$};
    \node at (3.5,1) {$\bTri(\Sigma)$};
    \node at (3.5,-4.7) {$\Tri(\Sigma)$};
    \node (v1) at (-3.6,-2.45) {1};
    \node at (3.5,-2.5) {2};
    \node at (-3.5,-0.5) {2};
    
    \draw (-5.05,-5.1) coordinate (v2) -- (-5.75,-5.65) -- (-5.4,-6.4) coordinate (v7) -- (-4.65,-6.4) coordinate (v8) {} -- (-4.4,-5.65) -- (v2);
    \draw  (v2) edge (v7);
    \draw  (v2) edge (v8);
    \draw (5.1,-5.1) coordinate (v3) -- (4.4,-5.65) -- (4.75,-6.4) coordinate (v17) {} -- (5.5,-6.4) -- (5.75,-5.65) coordinate (v18) {} -- (v3);
    \draw  (v17) edge (v18);
    \draw  (v3) edge (v17);
    \draw (-3.15,-7.05) coordinate (v6) -- (-3.85,-7.6) coordinate (v10) {} -- (-3.5,-8.35) -- (-2.8,-8.35) coordinate (v9) {} -- (-2.5,-7.6) -- (v6);
    \draw  (v6) edge (v9);
    \draw  (v9) edge (v10);
    \draw (3.15,-7.05) coordinate (v4) -- (2.45,-7.6) coordinate (v14) {} -- (2.8,-8.35) coordinate (v16) {} -- (3.55,-8.35) -- (3.8,-7.6) coordinate (v15) {} -- (v4);
    \draw  (v14) edge (v15);
    \draw  (v16) edge (v15);
    \draw (0,-7.5) coordinate (v5) -- (-0.7,-8.05) coordinate (v11) {} -- (-0.35,-8.8) -- (0.4,-8.8) coordinate (v12) {} -- (0.65,-8.05) coordinate (v13) {} -- (v5);
    \draw  (v11) edge (v12);
    \draw  (v11) edge (v13);
    \draw (-5.15,-1.5) coordinate (v2) -- (-5.85,-2.05) -- (-5.5,-2.8) coordinate (v7) -- (-4.75,-2.8) coordinate (v8) {} -- (-4.5,-2.05) -- (v2);
    \draw  (v2) edge (v7);
    \draw  (v2) edge (v8);
    \draw (-5.15,1) coordinate (v2) -- (-5.85,0.45) -- (-5.5,-0.3) coordinate (v7) -- (-4.75,-0.3) coordinate (v8) {} -- (-4.5,0.45) -- (v2);
    \draw  (v2) edge (v7);
    \draw  (v2) edge (v8);
    \draw (5.1,-1.5) coordinate (v3) -- (4.4,-2.05) -- (4.75,-2.8) coordinate (v17) {} -- (5.5,-2.8) -- (5.75,-2.05) coordinate (v18) {} -- (v3);
    \draw  (v17) edge (v18);
    \draw  (v3) edge (v17);
    \draw (-3.1,-2.8) coordinate (v6) -- (-3.8,-3.35) coordinate (v10) {} -- (-3.45,-4.1) -- (-2.75,-4.1) coordinate (v9) {} -- (-2.45,-3.35) -- (v6);
    \draw  (v6) edge (v9);
    \draw  (v9) edge (v10);
    
    \node at (-5.45,0.4) {\scriptsize 2};
    \node at (-4.85,0.4) {\scriptsize 1};
    \node at (-5.45,-2.15) {\scriptsize 1};
    \node at (-4.8,-2.15) {\scriptsize 2};
    \node at (-3.35,-3.5) {\scriptsize 1};
    \node at (-2.75,-3.45) {\scriptsize 2};
    \node at (4.8,-2.05) {\scriptsize 2};
    \node at (5.3,-2.2) {\scriptsize 1};
    
    \node at (-6.7,0.25) {$(\tri, \ell) =$};
    \node at (-5,-3.6) {$\phi^{-1}(\tri, \ell) =$};
    
    \draw [ultra thick, red, -{stealth}](-4,0) .. controls (-4,-0.2) and (-3.35,-0.3) .. (-2.85,-0.35) .. controls (-2.85,-0.85) and (-2.85,-1.35) .. (-2.85,-2.35);
    \draw [ultra thick, red, -{stealth}](-4,-6) .. controls (-4,-6.3) and (-3.4,-6.55) .. (-2.85,-6.7);
    \node [red] at (-2.6,-1) {$\gamma$};
    \node [red] at (-3.35,-6.15) {$\pi(\gamma)$};
    \end{tikzpicture}
    \caption{The graph the labeled triangulations of the pentagon.}
    \label{fig:pent}
\end{figure}
\end{ex}

\begin{ex}\label{ex:Dehn_path}
Let $\Sigma$ be an annulus with one special point on each boundary component. Let $C$ be a simple closed curve whose homotopy class generates $\pi_1(\Sigma) \cong \bZ$, and $T_C \in MC(\Sigma)$ the (right-hand) Dehn twist along $C$. Choose a labeled triangulation $(\tri,\ell)$ as shown in the top left of \cref{fig:example_Dehn}. Here
$I(\Sigma)=\{1,2\}$. 
Then one can see from the figure that the path 
\begin{align*}
    \gamma_C: (\tri,\ell) \overbar{1} (\tri',\ell') \overbar{(1\ 2)} T_C^{-1}(\tri,\ell)
\end{align*}
represents $T_C$.

\begin{figure}[h]
    \centering
    \begin{tikzpicture}[scale=0.85]
    \draw  (0,0) node (v1) {} ellipse (0.5 and 0.5);
    \draw  (v1) ellipse (2 and 2);
    \draw [red,thick] (0,0) circle(1cm);
    \node [fill, circle, inner sep=1.3pt] at (0,0.5) {};
    \node [fill, circle, inner sep=1.3pt] at (0,2) {};
    \draw [blue] (0,2) -- (0,0.5) {};
    \draw [blue](0,0.5) .. controls (1.85,2) and (2.15,-1.6) .. (0,-1.6) .. controls (-2.15,-1.6) and (-1.8,1) .. (0,2);
    \node [blue] at (-0.2,1.45) {$2$};
    \node [blue] at (1.55,0.5) {$1$};
    \node [blue] at (-0.5,-0.4) {$3$}; 
    \node [blue] at (-2.25,0) {$4$};
    \node [red] at (0,-1.2) {$C$};
    \draw [squigarrow, thick] (6.5,0) -- (3.5,0) node[midway,above]{$T_C$};
    \node at (-2.5,-1.5) {$(\tri,\ell)$};
    \begin{scope}[xshift=10cm]
    \draw  (0,0) node (v1) {} ellipse (0.5 and 0.5);
    \draw  (v1) ellipse (2 and 2);
    \draw [red,thick] (0,0) circle(1cm);
    \node [fill, circle, inner sep=1.3pt] at (0,0.5) {};
    \node [fill, circle, inner sep=1.3pt] at (0,2) {};
    \draw [blue] (0,2) -- (0,0.5) {};
    \draw [blue](0,0.5) .. controls (-1.85,2) and (-2.15,-1.6) .. (0,-1.6) .. controls (2.15,-1.6) and (1.8,1) .. (0,2); 
    \node [blue] at (0.2,1.45) {$1$};
    \node [blue] at (-1.55,0.5) {$2$};
    \node [blue] at (-0.5,-0.4) {$3$}; 
    \node [blue] at (-2.25,0) {$4$};
    \node [red] at (0,-1.2) {$C$};
    \node at (2.6,-1.6) {$T_C^{-1}(\tri,\ell)$};
    \end{scope}
    %
    \begin{scope}[xshift=5cm,yshift=-4cm]
    \draw  (0,0) node (v1) {} ellipse (0.5 and 0.5);
    \draw  (v1) ellipse (2 and 2);
    \draw [red,thick] (0,0) circle(1cm);
    \node [fill, circle, inner sep=1.3pt] at (0,0.5) {};
    \node [fill, circle, inner sep=1.3pt] at (0,2) {};
    \draw [blue] (0,2) -- (0,0.5) {};
    \draw [blue](0,0.5) .. controls (-1.85,2) and (-2.15,-1.6) .. (0,-1.6) .. controls (2.15,-1.6) and (1.8,1) .. (0,2); 
    \node [blue] at (0.2,1.45) {$2$};
    \node [blue] at (-1.55,0.5) {$1$};
    \node [blue] at (-0.5,-0.4) {$3$}; 
    \node [blue] at (-2.25,0) {$4$};
    \node [red] at (0,-1.2) {$C$};
    \node at (2.5,-1.5) {$(\tri',\ell')$};
    \end{scope}
    \draw [thick] (1.75,-1.75) -- (2.85,-2.85) node[midway,below left]{$1$};
	\draw [thick] (7,-2.85) -- (8.25,-1.75) node[midway,below right]{$(1\ 2)$};   
    \end{tikzpicture}
    \caption{The mutation loop given by the Dehn twist $T_C$.\CORRECTED}
    \label{fig:example_Dehn}
\end{figure}

\end{ex}

\subsubsection{Coordinate expression of the action of mapping classes}
As we mentioned in \cref{subsec:NT_classif}, the mapping class group acts on the space $\eML(F)$ for a hyperbolic surface $F$ of type $\Sigma$.
By using representation paths, we describe this action as follows.
Let $\gamma: (\tri, \ell) \to \phi^{-1}(\tri, \ell)$ be a representation path of a mapping class $\phi \in MC(\Sigma)$.
Then, we have the following diagram:
\begin{equation}
\begin{tikzcd}
    \eML(F) \ar[r, equal] \ar[d, "\boldsymbol{\sfx}^{(\tri,\ell)}"] & \eML(F) \ar[r, "\phi"] \ar[d, "\boldsymbol{\sfx}^{\phi^{-1}(\tri, \ell)}"] & \eML(F) \ar[d, "\boldsymbol{\sfx}^{(\tri,\ell)}"]\\
    \bR^{I} \ar[r, "\mu_{\gamma}"] & \bR^{I} \ar[r, "\phi_\ast"] & \bR^{I}
\end{tikzcd}
\end{equation}
Here, 
$\mu_\gamma$ denotes the composite of the coordinate changes and transpositions along $\gamma$, and $\phi_\ast$ denotes the identification $\sfx^{\phi^{-1}(\tri)}_{\phi^{-1}(\alpha)} \mapsto \sfx^{\tri}_\alpha$ for all $\alpha \in \tri$. Therefore, the composite $\phi_{(\tri,\ell)}:=\phi_\ast \circ \mu_\gamma$ gives the \emph{coordinate expression} of the $\phi$-action in the coordinate system $\bsfx^{(\tri, \ell)}$, which is a PL map. 

\begin{ex}\label{ex:Dehn_path_2}
In the previous example, we get the coordinate expression
\begin{align*}
    (T_C)_{(\tri,\ell)}(\sfx_1,\sfx_2) = (\sfx_2 + \max\{0,\sfx_1\}, -\sfx_1)
\end{align*}
of the Dehn twist $T_C$ in the coordinate system $\bsfx^{(\tri,\ell)}=(\sfx_1,\sfx_2)$. 
\end{ex}

\subsection{Sign stability}
For a real number $a \in \bR$, let $\sgn(a)$ denote its sign:
\[
\sgn(a):=
\begin{cases}
    + & \mbox{ if } a>0,\\
    0 & \mbox{ if } a=0,\\
    - & \mbox{ if } a<0.
\end{cases}
\]
We say that $\sgn(a)$ is \emph{strict} if $\sgn(a)\neq 0$. 

In this subsection, we fix a marked surface $\Sigma$ and a marked hyperbolic surface $(F,f)$ with geodesic boundary of type $\Sigma$. 
The following expression is useful in the sequel:

\begin{lem}[{\cite[Lemma 3.1]{IK19}}]\label{l:x-cluster signed}
For ideal triangulations $\tri$ and $\tri'$ of $\Sigma$ such that $\tri' \setminus \{\kappa'\} = \tri \setminus \{\kappa\}$, the coordinate transformation $\boldsymbol{\sfx}^{\tri'} \circ (\boldsymbol{\sfx}^\tri)^{-1}$ of the space $\eML(F)$ can be rewritten as
\begin{align}\label{eq:sign x-cluster}
    \sfx^{\tri'}_\alpha =
\begin{cases}
    -\sfx^\tri_\kappa & \mbox{if $\alpha=\kappa'$}, \\
    \sfx^\tri_\alpha + [\sgn(\sfx^{\tri}_\kappa) b^{\tri}_{\kappa  \alpha}]_+\sfx^{\tri}_\kappa & \mbox{if $\alpha \neq \kappa'$}.
\end{cases}
\end{align}
\end{lem}

\begin{dfn}[sign of a path]\label{d:sign}
Given a path $\gamma: (\tri, \ell) \to (\tri', \ell')$ in $\bTri(\Sigma)$ with $\pi(\gamma): \tri_0 \overbarnear{\kappa_0} \tri_1 \overbarnear{\kappa_1} \cdots \overbarnear{\kappa_{h-1}} \tri_h$ and a measured geodesic lamination $L \in \eML(F)$, the \emph{sign} $\boldsymbol{\epsilon}_\gamma(L)$ of $\gamma$ at $L$ is the sequence
\begin{align*}
    \bep_\gamma(L)=(\sgn(\sfx^{\tri_0}_{\kappa_0}(L)), \dots, \sgn(\sfx^{\tri_{h-1}}_{\kappa_{h-1}}(L))).
\end{align*}
\end{dfn}

The sign $\bep_\gamma(L)$ tells us the domain of linearity of the PL isomorphism $\mu_\gamma: \bR^{\tri} \to \bR^{\phi^{-1}(\tri)}$.
For $\bep \in \{+, -\}^h$, we define
\begin{align}\label{eq:sign_cone}
    \cC^{\bep}_\gamma := \overline{\{ L \in \eML(F) \mid \bep_\gamma(L) = \bep \}}.
\end{align}
This subspace is a cone (\emph{i.e.}, an $\bR_{>0}$-invariant convex subset) in $\eML(F)$.

\begin{lem}[{\cite[Lemma 3.5]{IK19}}]
If $\cC^{\bep}_\gamma \neq \emptyset$, then $\bsfx^{(\tri,\ell)}(\cC^{\bep}_\gamma)$ is a domain of linearity of the piecewise linear isomorphism $\mu_{\gamma}: \bR^{I} \to \bR^{I}$.
\end{lem}

We denote by $E^{\bep}_\gamma$ the presentation matrix of the linear extension of the restriction $\mu_\gamma|_{\bsfx^{(\tri, \ell)}(\cC^{\bep}_\gamma)}$.

\begin{dfn}[sign stability]\label{d:sign stability}
Let $\phi \in MC(\Sigma)$ be a mapping class, and let $\gamma: (\tri, \ell) \to \phi^{-1}(\tri, \ell)$ be its representation path such that $\pi(\gamma)$ has length $h$.
Let $\Omega \subset \eML(F)$ be a subset which is invariant under the rescaling action of $\bR_{> 0}$, which we call a \emph{domain of stability}. 

Then we say that $\gamma$ is \emph{sign-stable} on $\Omega$ if there exists a sequence $\bep^\stab \in \{+,-\}^h$ of strict signs such that for each $w \in \Omega \setminus \{0\}$, there exists an integer $n_0 \in \mathbb{N}$ such that   \[\boldsymbol{\epsilon}_\gamma(\phi^n(w)) = \boldsymbol{\epsilon}^\stab \]
for all $n \geq n_0$. 
We call $\bep^\stab=\boldsymbol{\epsilon}_{\gamma,\Omega}^\stab$ the \emph{stable sign} of $\gamma$ on $\Omega$.
Also, we write $E^{(\tri, \ell)}_{\phi, \Omega} := E^{\bep^\stab}_\gamma$ for the presentation matrix and call it \emph{stable presentation matrix}.
\end{dfn}

We note that the stable presentation matrix $E^{(\tri,\ell)}_{\phi, \Omega}$ depends only on the mutation loop $\phi$ and the initial labeled triangulation $(\tri,\ell)$, since it is a presentation matrix of the restriction of the action $\phi: \eML(F) \to \eML(F)$ to one of its domain of linearity with respect to the coordinate system $\bsfx^{(\tri,\ell)}$ (\emph{cf}. \cite[Corollary 3.7]{IK19}).

\subsection{Basic sign stability and the cluster stretch factor}
Let $\cC^+_\tri \subset \eML(F)$ be the cone consisting of the measured geodesic laminations $(G, \mu)$ with the support $G = \bigcup f(\tri)$. 
It is characterized in the coordinate system $\bsfx^\tri$ by
\begin{align*}
    \cC^+_\tri = \{ L \in \eML(F) \mid  \sfx^\tri_\alpha(L) \geq 0 \text{ for all } \alpha \in \tri \}.
\end{align*}
Define another cone $\cC^-_\tri \subset \eML(F)$ by replacing all $\sfx^\tri_\alpha$ with $-\sfx^\tri_\alpha$ in the coordinate description above. 
Every path $\gamma$ starting from $(\tri, \ell)$ has a constant sign in each of the interiosr of the cones $\cC^+_{\tri}$ and $\cC^-_{\tri}$, respectively, which has the special meaning in the theory of cluster algebra \cite[Lemma 3.12, Corollary 3.13]{IK19}. 
In this sense, the sign stability on the set
\begin{align}\label{eq:Omega^can}
    \Omega^{\mathrm{can}}_{\tri}:=\interior\cC^+_{\tri} \cup \interior\cC^-_{\tri}
\end{align}
is most fundamental, and it turns out that it is sufficient for the computation of the algebraic entropy of cluster transformations (\cref{subsec:entropy}).
\begin{dfn}\label{d:cluster stretch factor}
A representation path $\gamma: (\tri, \ell) \to \phi^{-1}(\tri, \ell)$ of a mapping class $\phi \in MC(\Sigma)$ is said to be \emph{basic sign-stable} if it is sign-stable on $\Omega^\mathrm{can}_{\tri}$.
In this case, the spectral radius
$\lambda_{\phi}^{\tri} \geq 1$ of the stable presentation matrix $E_{\phi,\Omega_\tri^{\mathrm{can}}}^{(\tri,\ell)}$ is called
the \emph{cluster stretch factor} of $\phi$.
\end{dfn}

We remark that the cluster stretch factor $\lambda_\phi^{\tri}$ does not depend on the initial triangulation $\tri$ under a certain condition \cite[Remark 3.16]{IK19}. It is also the Perron--Frobenius eigenvalue of the stable presentation matrix \cite[Theorem 3.12]{IK19}.

\begin{ex}\label{ex:Dehn_SS}
We continue to study the Dehn twist $\phi := T_C$ along $C$ in \cref{ex:Dehn_path,ex:Dehn_path_2}.
Since the path $\gamma_C$ contains a single flip, the possible sign sequences are $\bep=(+)$ or $(-)$. 
We have
\begin{align*}
    E_{\gamma_C}^{(+)}=\begin{pmatrix}
    0 & 1 \\
    -1   & 0
    \end{pmatrix} \quad\mbox{and} \quad
    E_{\gamma_C}^{(-)}=\begin{pmatrix}
    2 & 1 \\
    -1   & 0
    \end{pmatrix}.
\end{align*}
Then the path $\gamma_C$ is sign-stable on the entire space $\Omega:=\eML(F)$ with the stable sign $\boldsymbol{\epsilon}_{\gamma_C}^\stab=(-)$. Indeed, one can verify that the cone
\begin{align*}
    \cC = \{ \sfx_{2} \geq 0,\ \sfx_{1} + \sfx_{2} \geq 0 \} 
\end{align*}
is $\phi$-stable (\emph{i.e.}, $\phi(\cC) \subset \cC$), and the orbit $(\phi^n(L))_{n \geq 0}$ of any $L \in \eML(F)$ eventually enters this cone.
In particular, the ray $\bR_{>0} \cdot L_\phi$ with $(\sfx_{1}(L_{\phi}), \sfx_{2}(L_{\phi})) = (-1, 1)$ is fixed by $\phi$.
Moreover, the corresponding eigenvalue $1$ is the cluster stretch factor of $\phi$.
\end{ex}

\begin{figure}[h]
    \centering
    \begin{tikzpicture}[scale=1.2]
    \coordinate (v1) at (0,0) {};
    \coordinate (v3) at (0,3) {};
    \coordinate (v4) at (0,-3) {};
    \coordinate (v6) at (-3,3) {} {};
    \filldraw [draw=yellow!35, fill=yellow!35] (3,0) node[right]{$\sfx^\tri_{\ell(1)}$} -- (3,3) -- (-3,3) -- (0,0) -- (3,0);
    \draw [dashed] (v3) edge (v1) node[above] {$\sfx^\tri_{\ell(2)}$};
    \draw [dashed] (v1) edge (v4);
    \draw (-3,0) -- (3,0);
    \node[color=orange] at (1.7,1.3) {$\cC$};
    \coordinate (v8) at (-3,3) {} {} {};
    \draw[red, thick]  (v1) edge (v8);
    \node[red] at (-2,1.5) {$L_{\phi}$};
    \end{tikzpicture}
    \caption{An invariant cone (yellow region) and the fixed ray (red ray) of the Dehn twist $\phi = T_C$ along $C$.}
    \label{fig:cones_kroc}
\end{figure}

Further examples of sign-stable representation paths with cluster stretch factor equal to $1$ or larger than $1$ are provided in \cref{ex:Dehn_SS} and \cref{ex:4_sph}. 

\subsection{Uniform sign stability}\label{subsec:uniform_stability}
Let us consider the $\bR_{>0}$-invariant subset $\eML_\bQ(F) \subset \eML(F)$ consistsing of the measured geodesic laminations having only the non-compact part $B$ and the closed geodesics part $S$ in terms of the splitting given in \cref{l:structure_of_lamination}.

\begin{dfn}[Uniform sign stability]\label{d:uniform stability}
A mapping class $\phi \in MC(\Sigma)$ is said to be \emph{uniformly sign-stable} if any representation path $\gamma: (\tri, \ell) \to \phi^{-1}(\tri, \ell)$ of $\phi$ is sign-stable on $\eML_\bQ(F)$. 
\end{dfn}
Note that $\eML_\bQ(\Sigma)$ contains $\cC^+_\tri$ for all $\tri \in \Tri(\Sigma)$.
Thus, $\Omega^{\mathrm{can}}_{\tri} \subset \eML_\bQ(\Sigma)$,
so a uniformly sign-stable mutation loop has a well-defined cluster stretch factor $\lambda_\phi^\tri$.


\smallskip

\subsection{Relation to the cluster-pseudo-Anosov property}\label{subsec:cluster-pA}

Let us consider the subspaces 
\begin{align*}
    |\fF^\pm_\Sigma|:= \bigcup_{\tri \in \Tri(\Sigma)} \cC_\tri^\pm \subset \eML(F)
\end{align*}
Obviously, $|\fF^\pm_\Sigma|$ are contained in $\eML_\bQ(F)$. It is known that $|\fF^+_\Sigma|=|\fF^-_\Sigma|$ unless $\Sigma$ is a once-punctured surface, and that $|\fF^+_\Sigma| \cup |\fF^-_\Sigma| \subset \eML(F)$ is dense \cite{Yurikusa}. 

Recall that a mapping class $\phi \in MC(\Sigma)$ is said to be \emph{cluster-pA} if no non-trivial power of $\phi$ fixes any point in $|\fF_\Sigma^+|$. 
This definition is equivalent to \cite[Definition 2.1]{Ish19}, thanks to \cite[Theorem 2.13]{GHKK}. 

\begin{prop}\label{prop:uniform_SS_cluster_pA}
A uniformly sign-stable mapping class is cluster-pA.
\end{prop}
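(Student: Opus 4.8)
The core mechanism I would exploit is that a $\phi$--periodic point has a \emph{periodic} sign sequence, while uniform sign stability forces that sequence to be eventually equal to a \emph{strict} stable sign; a periodic sequence that is eventually constant is constant, so the sign must be strict \emph{at the periodic point itself}. The strategy is then to produce, for the hypothetical fixed point, a representation path whose sign at that point is manifestly non-strict, giving a contradiction. First I would record the reduction: by \cref{d:uniform stability} every representation path of $\phi$ is sign-stable on $\bR_{>0}\cdot\X^\uf_{(v_0)}(\bQ^\trop)$, and by \cite[Proposition 3.14]{IK19} this is equivalent to sign stability on $\Omega^\bQ_{(v_0)} = |\fF^+_{(v_0)}|\cup|\fF^-_{(v_0)}|\cup\bR_{>0}\cdot\X^\uf_{(v_0)}(\bQ^\trop)$; in particular $\phi$ is sign-stable on $|\fF^+_{(v_0)}|$ for every choice of basepoint. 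Assuming $\phi$ is \emph{not} cluster-pseudo-Anosov, pick $r\ge 1$ and a point $w_0\in |\fF^+_{\bs}|\setminus\{0\}$ (the origin is fixed by every mutation loop, so the relevant condition concerns nonzero points) with $\phi^r(w_0)=w_0$.

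Next I would isolate the key sublemma: \emph{a uniformly sign-stable $\phi$ has no nonzero periodic point lying on a wall $\{x^{(v)}_i=0\}$ of any cluster chart, and $\phi$ has infinite order}. To prove it, suppose $w'$ is $\phi$--periodic with $x^{(v)}_i(w')=0$ for some $v,i$. Using \cref{r:change of path}, take a representation path of $\phi$ based at $v$ and prepend a round trip $v\overbar{i}v_1\overbar{i}v$; the result is an $\bs$-equivalent representation path whose first horizontal edge is the flip in direction $i$, so by \cref{d:sign} its first sign entry at $w'$ is $\sgn(x^{(v)}_i(w'))=0$, i.e.\ the sign at $w'$ is non-strict. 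But the $\phi$--orbit of $w'$ is periodic, hence its sign sequence is periodic and, by sign stability, eventually equal to the strict stable sign; periodicity then forces the sign to be strict at $w'$ too, a contradiction. The same argument applied to a rational wall point (which always exists in $\Omega^\bQ_{(v_0)}$) shows $\phi^r=\mathrm{id}$ is impossible.

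With the sublemma in hand I would finish by a dichotomy on the unique cone $C$ of the simplicial fan $\fF^+_{(v_0)}$ (simplicial by \cite[Theorem 2.13]{GHKK}) with $w_0\in\relint C$. If $C$ is \emph{not} maximal, then writing $w_0\in\mu_{\gamma(v)}^{-1}(\cC^+_{(v)})$ its image $u_0=\mu_{\gamma(v)}(w_0)$ lies on a proper face of $\cC^+_{(v)}$, hence on a wall $\{x^{(v)}_i=0\}$; as $u_0$ is $\phi$--periodic this contradicts the sublemma. If $C$ is \emph{maximal}, then $w_0\in\interior C$, and since $\Gamma_\bs$ permutes the maximal cones of $\fF^+$ compatibly with its action on $\bExch_\bs$ (maximal cones corresponding to vertices), $\phi^r(w_0)=w_0\in\interior C\cap\interior \phi^r(C)$ forces $\phi^r(C)=C$ and hence $\phi^r(v)=v$; freeness of the action (\cref{lem:action_bExch}) gives $\phi^r=\mathrm{id}$, again contradicting the sublemma. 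Either way we reach a contradiction, so $\phi$ is cluster-pseudo-Anosov.

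\textbf{Main obstacle.} The delicate points are the maximal-cone case and the equivariance it relies on: I must verify carefully that the $\Gamma_\bs$-action on the cluster complex $|\fF^+|$ matches the free action on $\bExch_\bs$ under the correspondence between top cones and vertices, so that a fixed interior point yields a fixed vertex. The other place requiring care is the constancy-of-sign-along-a-periodic-orbit step together with the construction of the "first-flip-in-the-vanishing-direction" representation path via \cref{r:change of path}; both are elementary but must be checked so that the non-strict sign entry genuinely persists along the whole periodic orbit.
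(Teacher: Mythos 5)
Your proposal is correct and its core mechanism is the same as the paper's: locate a coordinate hyperplane $\{x^{(v)}_k=0\}$ whose intersection with the periodic orbit is nonempty, prepend a round trip $v\overbar{k}v_1\overbar{k}v$ to a representation path via \cref{r:change of path}, and observe that the resulting zero sign entry recurs along the periodic orbit, contradicting stabilization to a strict sign. The paper's own proof runs exactly this argument, obtaining the fixed face $F_k\subset\{x_k^{(v_0)}=0\}$ by letting $\phi^m$ permute the faces of $\bS\cC^+_{(v_0)}$ containing $w$; your dichotomy on whether the carrier cone of $w_0$ is maximal is in fact \emph{more} explicit than the paper on the case $w_0\in\interior\cC^+_{(v_0)}$, where no proper face contains $w_0$ and one must argue differently (as you do, via finite order). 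The only point to patch is in that maximal-cone branch: the maximal cones of $\fF^+_{(v_0)}$ are indexed by vertices of the \emph{unlabeled} exchange graph $\Exch_\bs$ (relabeling $\sigma\in\fS_I$ preserves the positive orthant), so $\phi^r(C)=C$ only yields $\phi^r(v)=\sigma.v$ for some $\sigma\in\fS_I$, and the action on $\Exch_\bs$ need not be free. Passing to a further power $\phi^{rs}$ with $\sigma^s=e$ (using that $\phi$ commutes with the relabeling action, \cref{lem:nat_rel}) gives $\phi^{rs}(v)=v$, and only then does freeness on $\bExch_\bs$ (\cref{lem:action_bExch}) force $\phi^{rs}=\mathrm{id}$; your sublemma, applied to a rational wall point, still rules this out. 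You should also state the sublemma for periodic points lying in $\Omega^{\bQ}_{(v)}$, since sign stability is only assumed there, but both of your applications (a face of $|\fF^+_\bs|$, a rational point) satisfy this.
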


\begin{proof}
First, we note that the collection
\begin{align*}
    \fF_\Sigma^\pm:=\{ \text{the faces of } \cC_{\tri}^\pm \mid \tri \in \Tri(\Sigma) \}
\end{align*}
of cones in $\eML(F)$ forms a simplicial fan (in each coordinate system $\bsfx^\tri$ for $\tri \in \Tri(\Sigma)$) by \cite[Theorem 2.13]{GHKK}, called the \emph{cluster complex}.
Moreover, the support of these fans are nothing but the subspaces $|\fF^\pm_\Sigma|$, respectively.

Suppose that for a mapping class $\phi$, there exists an integer $m > 0$ and a measured geodesic lamination $L \in |\fF_\Sigma^+|$ such that $\phi^m(L)=L$.
Take an ideal triangulation $\tri_0$ of $\Sigma$ such that $L \in \cC^+_{\tri_0}$. 
Since the action of a mapping class on the cluster complex is simplicial, $\phi^m$ permutes the faces of the cones $\cC^+_{\tri_0}$ which contain $L$. Then there exists an integer $r > 0$ such that the power $\phi^{mr}$ fixes the face $F_\kappa$ defined by $\sfx^{\tri_0}_\kappa = 0$ of $\cC^+_{\tri_0}$ for some $\kappa \in \tri$.

Now we choose a representation path $\gamma$ of $\phi$ starting $(\tri_0, \ell_0)$ so that the first two edges of $\pi(\gamma)$ are of the form $\tri_0 \overbarnear{\kappa} \tri_1 \overbarnear{\kappa} \tri_0$.
Then the sign $\boldsymbol{\epsilon}_\gamma(\phi^n(L))$ must contain a zero entry for all $n \in mr\bZ_{>0}$ from the definition of $F_\kappa$.
In particular the sign never stabilizes to a strict one, and hence $\gamma$ is not sign-stable on $|\fF_\Sigma^+|$.
Thus $\phi$ is not uniformly sign-stable.  
\end{proof}

\section{Sign stability of Dehn twists}\label{sec:Dehn twist}

Here we discuss the sign stability of Dehn twists. 
Let $C$ be a simple closed curve on a marked surface $\Sigma$ satisfying the following condition:
\begin{align}\label{eq:condition_Dehn}
    \mbox{There exists at least one marked point on each connected component of $\Sigma \setminus C$.}
\end{align}
On each side of $C$, choose a loop based at a marked point which is freely homotopic (forgetting the base-point) to the curve $C$ so that these two loops form a tubular neighborhood $\cN(C)$ of $C$. See \cref{fig:tubular_nbd}. 
\begin{figure}[h]
    \centering
    \begin{tikzpicture}
    \draw (3.6,-1) coordinate (v2) arc [start angle=-90, end angle=90, x radius=.45cm, y radius=1.5cm];
    \draw [dashed] (3.6,2) coordinate (v1) arc [start angle=90, end angle=270, x radius=.45cm, y radius=1.5cm];
    \draw (-0.5,2.4) .. controls (1.5,1.9) and (5.5,1.9) .. (7.5,2.4);
    \draw (-0.5,-1.4) .. controls (1.5,-0.9) and (5.5,-0.9) .. (7.5,-1.4);
    \node [fill, circle, inner sep=1.3] at (0,0.5) {};
    \node [fill, circle, inner sep=1.3] at (7,0.5) {};
    \draw [blue] (7,0.5) .. controls (5.5,0.5) and (5.05,1) .. (5,1.5) .. controls (4.95,1.95) and (4.8,2) .. (4.6,2.05);
    \draw [blue] (7,0.5) .. controls (5.5,0.5) and (5.05,0) .. (5,-0.5) .. controls (4.95,-0.95) and (4.8,-1) .. (4.6,-1.05);
    \draw [blue] (0,0.5) .. controls (1.5,0.5) and (2.5,1) .. (2.7,1.5) .. controls (2.85,1.9) and (2.6,2.05) .. (2.5,2.05);
    \draw [blue] (0,0.5) .. controls (1.5,0.5) and (2.55,0) .. (2.75,-0.5) .. controls (2.9,-0.9) and (2.65,-1.05) .. (2.5,-1.05);
    \draw [blue, dashed] (2.5,2.05) arc [start angle=90, end angle=270, x radius=.45cm, y radius=1.55cm];
    \draw [blue, dashed] (4.6,2.05) arc [start angle=90, end angle=270, x radius=.45cm, y radius=1.55cm];
    \node at (3.6,-1.35) {$C$};
    \end{tikzpicture}
    \caption{A tubular neighborhood $\cN(C)$ with marked points.}
    \label{fig:tubular_nbd}
\end{figure}
Then $\cN(C)$ can be regarded as a marked surface: an annulus with one marked point on each of its boundary component. Take a labeled triangulation of $\cN(C)$ as shown in the top-left of \cref{fig:example_Dehn}, 
and extend it arbitrarily to a labeled triangulation $(\tri_C,\ell_C)$ of $\Sigma$.
Then as we have seen in \cref{ex:Dehn_path}, the path $\gamma_C: (\tri_C,\ell_C) \overbar{1} (\tri',\ell') \overbar{(1\ 2)} T_C^{-1}(\tri_C,\ell_C)$ represents the mutation loop $T_C \in  MC(\Sigma)$. 

Fix a marked hyperbolic surface $(F,f)$ with geodesic boundary of type $\Sigma$.
Let us consider the $\bR_{>0}$-invariant set $\Omega_C \subset \eML(F)$ consisting of the measured geodesic laminations whose supports intersect with the closed geodesic $G_C$ homotopic to $C$.
It contains the subset $\Omega^{\mathrm{can}}_{\tri_C}$. Let us write $\sfx_i:=\sfx_i^{(\tri_C,\ell_C)}$ for $i \in I$. 


\begin{prop}[Sign stability of Dehn twists]\label{lem:Dehn_SS}
Let $T_C$ be the Dehn twist along a simple closed curve $C$ satisfying \eqref{eq:condition_Dehn}. Then the representation path $\gamma_C: (\tri_C,\ell_C) \xrightarrow{1} (\tri',\ell') \xrightarrow{(1\ 2)} T_C^{-1}(\tri_C,\ell_C)$ is sign-stable on the $\bR_{>0}$-invariant set $\Omega_{(\tri_C,\ell_C)}^{(1,2)}$. 
Its cluster stretch factor is $1$.
\end{prop}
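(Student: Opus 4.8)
The plan is to reduce the whole statement to the two-dimensional affine ($\ell=2$) Kronecker dynamics of \cref{ex:Kronecker}. First observe that the horizontal subpath of $\gamma_C$ consists of the single flip along the arc labelled $1$, so that $h=1$ and the sign of $\gamma_C$ at a point $\hL$ is the single entry $\sgn(x^{(\tri_C,\ell_C)}_1(\hL))$. Consequently, sign-stability on $\Omega^{(1,2)}_{(\tri_C,\ell_C)}$ amounts to showing that for every $\hL$ with $(x_1,x_2)\neq(0,0)$ the sign $\sgn(x_1(T_C^n\hL))$ is eventually constant and strict. The key reduction is that the pair $(x_1,x_2)$ evolves \emph{autonomously}: inside the annular neighbourhood $\cN(C)$ the arc $1$ is adjacent only to the arcs $2,3,4$, and the two triangles of $\cN(C)$ bounded by the diagonals $1,2$ force $b^{(\tri_C,\ell_C)}_{21}=\pm 2$, while no triangle outside $\cN(C)$ has both $1$ and $2$ as edges. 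Using \cref{l:x-cluster signed} for the flip along $1$ followed by the transposition $(1\ 2)$, the coordinate expression of $T_C$ sends
\[ \hat x_1 = x_2 + [\sgn(x_1)\,b_{21}]_+\,x_1, \qquad \hat x_2 = -x_1, \]
which depends only on $(x_1,x_2)$, whereas $x_3,x_4$ are merely sheared by multiples of $x_1$ and the remaining coordinates are fixed. Thus $(x_1,x_2)\mapsto(\hat x_1,\hat x_2)$ is exactly the PL map of \cref{ex:Kronecker} with $\ell=2$.

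Next I would run the two-dimensional analysis. On the half-space $x_1\ge 0$ the map is $M_+=\bigl(\begin{smallmatrix}2&1\\-1&0\end{smallmatrix}\bigr)$, with characteristic polynomial $(\nu-1)^2$; one checks that the cone $\cC=\{x_1\ge 0,\ x_1+x_2\ge 0\}$ is invariant, that $x_1+x_2$ is conserved on $\cC$, and that $x_1\mapsto x_1+(x_1+x_2)$ there, so $x_1$ is eventually positive on $\cC\setminus\{0\}$. A short case analysis on the two half-planes $x_1\ge 0$ and $x_1\le 0$ (using that on $x_1\le 0$ the map is the order-four rotation $\bigl(\begin{smallmatrix}0&1\\-1&0\end{smallmatrix}\bigr)$) shows that the orbit of any $(x_1,x_2)\neq(0,0)$ either enters $\cC$ with $x_1+x_2>0$ after finitely many steps or lands on the fixed ray $\bR_{\ge 0}(1,-1)$; in all cases $x_1>0$ eventually. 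Hence $\gamma_C$ is sign-stable on $\Omega^{(1,2)}_{(\tri_C,\ell_C)}$ with strict stable sign $\boldsymbol{\epsilon}^{\stab}=(+)$ (orientations being fixed as in \cref{ex:Kronecker}); this reproves the $\ell=2$ assertion quoted there, the only difference being that the excluded locus is now the whole subspace $\{x_1=x_2=0\}$ rather than a single point.

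Finally, for the cluster stretch factor note that $\Omega^{\mathrm{can}}_{(\tri_C,\ell_C)}\subset\Omega^{(1,2)}_{(\tri_C,\ell_C)}$, so $\lambda_{T_C}$ is defined and equals the spectral radius of the stable presentation matrix $E^{(\tri_C,\ell_C)}_{T_C,\Omega}=P_{(1\ 2)}E^{(\tri_C,\ell_C)}_{1,+}$ by \cref{lem:pres_mat_x}. This matrix is block lower-triangular: the $(1,2)$-block is $M_+$ with eigenvalues $1,1$, the $(3,4)$-block is the identity, and the remaining arcs are fixed, so $E^{(\tri_C,\ell_C)}_{T_C,\Omega}$ is unipotent and its spectral radius is $1$. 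Since $\Omega^{\mathrm{can}}_{(\tri_C,\ell_C)}$ is tame, \cref{thm:Perron-Frobenius} applies and yields $\lambda_{T_C}=1$.

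The \emph{main obstacle} is the two-dimensional step: because the relevant eigenvalue is $1$ rather than strictly larger than $1$, there is no hyperbolic North--South contraction to a single eigenray to exploit, and the eventual positivity of $x_1$ must instead be extracted from the conserved quantity $x_1+x_2$ together with the finite case analysis above. The other technical point requiring care is the justification of the decoupling of $(x_1,x_2)$, which rests on the local quiver computation $b_{21}=\pm 2$ and the fact that arc $1$ meets no triangle outside $\cN(C)$; here the hypothesis \eqref{eq:condition_Dehn} is exactly what makes the annular neighbourhood $\cN(C)$ with its two marked points available.
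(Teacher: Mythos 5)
Your proposal is correct and follows essentially the same route as the paper: it reduces the sign of $\gamma_C$ to the sign of $x_1$, identifies the $(x_1,x_2)$-dynamics with the $\ell=2$ Kronecker example via $b_{12}=-2$, and computes the stretch factor from the block-triangular form of the stable presentation matrix. The only difference is that you spell out the invariant-cone/conserved-quantity analysis that the paper delegates to \cref{ex:Kronecker}.
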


\begin{proof}

First, we note that the subset $\Omega_C$ is characterized in the coordinate system $\bsfx^{(\tri_C,\ell_C)}$ as 
\begin{align*}
    \Omega_C = \{ L \in \eML(F) \mid (\sfx_{1}(L), \sfx_{2}(L)) \neq (0,0)\}.
\end{align*}
We have investigated the action of $T_C$ in the direction $(\sfx_1, \sfx_2)$ in \cref{ex:Dehn_SS}.
Since the sign of the path $\gamma$ only concerns with these two coordinates, the first assertion follows. For the second statement, notice that the stable presentation matrix $E_{T_C}^{(\tri_C,\ell_C)}$ is block-decomposed as follows:
\begin{align}\label{eq:E_matrix_Dehn_twist}
    E_{T_C}^{(\tri_C,\ell_C)}=E_{\gamma_C}^{(-)}=
    \left(
    \begin{array}{c|c}
    \begin{array}{cc}
         2 & 1 \\
         -1  & 0 
    \end{array}
    & \text{\huge$0$} \\ \hline
    \text{\huge$\ast$} & 
    \begin{array}{ccc}
        1 &      & \\
          &\ddots& \\
          &      & 1
    \end{array}
    \end{array}
    \right)
\end{align}
Here the first two directions correspond to $\{1,2\}$. Then we get $\lambda_\phi=\rho(E_{T_C}^{(\tri_C,\ell_C)}) =1$ as desired.
\end{proof}


Note that the limit $L_{T_C} \in \eML(F)$ given by $\sfx_1(L_{T_C}) = -\sfx_2(L_{T_C}) = -1$ and $\sfx_j(L_{T_C}) = 0$ for $j \notin \{1,2\}$ is identified with measured geodesic lamination $(G_C,\delta_C)$.
Here, $\delta_C := \delta_{G_C}$ (recall the notation in \cref{ex:MGL}).
Therefore, $L_{T_C} \in \ML_0(F)$.

\begin{rem}
In fact, the $\bR_{>0}$-invariant set $\Omega_C$ is described using the ``geometric intersection number" $\mathcal{I}_C$ for measured geodesic laminations (\emph{cf}. \cite[Corollary 12.2]{FG03}):
\begin{align*}
    \Omega_C = \{ L \in \eML(F) \mid \mathcal{I}_C(L) \neq 0\} \sqcup \bR_{>0}(G_C, \delta_C).
\end{align*}
The map $\mathcal{I}_C$ is PL in the shear coordinates. In particular, 
\begin{align*}
    \mathcal{I}_C\circ (\sfx^{(\tri_C,\ell_C)})^{-1} = \max\left\{ \frac{\sfx_1 + \sfx_2}{2},\frac{-\sfx_1 + \sfx_2}{2}, -\frac{\sfx_1 + \sfx_2}{2}\right\}.
\end{align*}
Indeed, it can be obtained as the tropical limit of the trace function along $C$ on the enhanced \Teich\ space \cite[Theorem 12.2]{FG03}. 
\end{rem}

\begin{ex}\label{ex:Dehn_SS_2}
Let $\Sigma$ be a once-punctured surface of genus 2. Let $C \subset \Sigma$ be the closed curve shown in \cref{fig:genus_2}, and consider the Dehn twist $T_C$ along $C$. 
A labeled triangulation $(\tri,\ell)$ of $\Sigma$ is shown in the left of \cref{fig:Dehn_genus_2}. Note that the arcs $3$ and $7$ bound a tubular neighborhood $\cN(C)$ of $C$.
\begin{figure}[h]
    \centering
    \begin{tikzpicture}
    \draw (-0.05,-0.1) .. controls (-0.2,-0.35) and (0.1,-0.55) .. (0.5,-0.55) .. controls (0.9,-0.55) and (1.2,-0.35) .. (1.05,-0.1);
    \draw (0,-0.4) .. controls (0.1,-0.05) and (0.9,-0.05) .. (1,-0.4);
    \draw (4.45,-0.05) .. controls (4.3,-0.3) and (4.6,-0.5) .. (5,-0.5) .. controls (5.4,-0.5) and (5.7,-0.3) .. (5.55,-0.05);
    \draw (4.5,-0.35) .. controls (4.6,0) and (5.4,0) .. (5.5,-0.35);
    \draw  (2.75,-0.4) ellipse (4 and 2.2);
    \node [fill, circle, inner sep=1.5pt] (p) at (3.15,-2) {};
    \draw [blue] (2.75, -2.6) arc [start angle=-90, end angle=90, x radius=.6cm, y radius=2.2cm];
    \draw [blue, dashed] (2.75,1.8) arc [start angle=90, end angle=270, x radius=.6cm, y radius=2.2cm];
    \draw [blue] (p) .. controls (0.5,-2) and (-1.5,-0.8) .. (-0.2,0.45) .. controls (1.15,1.6) and (2.55,-0.5) .. (p);
    \draw [blue] (p) .. controls (5.4,-1.75) and (6.9,-0.6) .. (5.85,0.45) .. controls (4.9,1.4) and (3.75,0.1) .. (p);
    \draw [blue] (p) .. controls (2.5,-1) and (1.5,-0.55) .. (0.5,-0.55);
    \draw [blue, dashed] (0.5,-0.55) .. controls (-0.15,-0.55) and (-0.4,-1.65) .. (0,-2);
    \draw [blue] (0,-2) .. controls (0.45,-2.2) and (1.5,-2.5) .. (p);
    \draw [blue] (p) .. controls (4,-1) and (4.45,-0.5) .. (5,-0.5);
    \draw [blue, dashed] (5,-0.5) .. controls (5.6,-0.5) and (5.85,-1.8) .. (5.35,-2.05);
    \draw [blue] (5.35,-2.05) .. controls (4.8,-2.25) and (4,-2.35) .. (p);
    \draw [red] (-1.25,-0.4) .. controls (-1.25,0.05) and (-0.15,0.05) .. (0,-0.4);
    \draw [red, dashed] (0,-0.4) .. controls (0,-0.9) and (-1.25,-0.9) .. (-1.25,-0.4);
    \node [blue] at (3.35,1.15) {1};
    \node [blue] at (1.05,0.95) {2};
    \node [blue] at (1.35,-0.9) {3};
    \node [blue] at (4.55,0.85) {4};
    \node [blue] at (4.35,-1) {5};
    \node [red] at (-1.5,0) {$C$};
    \end{tikzpicture}
    \caption{Once-punctured surface of genus 2. Here only some of the arcs in the labeled triangulation $(\tri,\ell)$ are shown.}   \label{fig:genus_2}
\end{figure}

\begin{figure}[h]
    \centering
\begin{tikzpicture}[xscale=-1]
\draw [blue] (6,3.5) coordinate (v1) -- (4,2.5) coordinate (v2) -- (3,0.5) coordinate (v4) -- (4,-1.5) coordinate (v6) -- (6,-2.5) coordinate (v3) -- (8,-1.5) -- (9,0.5) coordinate (v7) -- (8,2.5) coordinate (v5) -- (v1) -- (v3);
\draw [blue] (v1) -- (v6);
\draw [blue] (v5) -- (v3) -- (v7);
\draw [red] (3.5,-0.5) .. controls (4,-0.15) and (5.3,2.3) .. (5,3);
\draw [blue] (v4) -- (v1);
\node [blue] at (6.2,1.15) {1};
\node [blue] at (4.85,3.15) {2};
\node [blue] at (3.35,-0.65) {2};
\node [blue] at (3.35,1.65) {3};
\node [blue] at (4.85,-2.15) {3};
\node [blue] at (7.15,3.15) {4};
\node [blue] at (8.65,-0.65) {4};
\node [blue] at (8.65,1.65) {5};
\node [blue] at (7.15,-2.15) {5};
\node [blue] at (4.1,1.25) {6};
\node [blue] at (5.05,0.35) {7};
\node [blue] at (7,0.5) {8};
\node [blue] at (7.85,-0.4) {9};
\node [fill, circle, inner sep=1.3pt] at (6,3.5) {};
\node [fill, circle, inner sep=1.3pt] at (4,2.5) {};
\node [fill, circle, inner sep=1.3pt] at (3,0.5) {};
\node [fill, circle, inner sep=1.3pt] at (4,-1.5) {};
\node [fill, circle, inner sep=1.3pt] at (6,-2.5) {};
\node [fill, circle, inner sep=1.3pt] at (8,-1.5) {};
\node [fill, circle, inner sep=1.3pt] at (9,0.5) {};
\node [fill, circle, inner sep=1.3pt] at (8,2.5) {};
\node [red] at (4,0.5) {$C$};

\draw [squigarrow, thick](0.5,0.5) -- (2.5,0.5);
\node at (1.5,0.9) {$T_C$};

\draw [blue] (-3,3.5) coordinate (v1) -- (-5,2.5) coordinate (v2) -- (-6,0.5) coordinate (v4) -- (-5,-1.5) coordinate (v6) -- (-3,-2.5) coordinate (v3) -- (-1,-1.5) -- (0,0.5) coordinate (v7) -- (-1,2.5) coordinate (v5) -- (v1) -- (v3);
\draw [blue] (v1) -- (v6);
\draw [blue] (v5) -- (v3) -- (v7);
\draw [red] (-5.5,-0.5) .. controls (-5,-0.15) and (-3.7,2.3) .. (-4,3);
\node [blue] at (-2.8,1.15) {1};
\node [blue] at (-4.15,3.15) {6};
\node [blue] at (-5.65,-0.65) {6};
\node [blue] at (-5.65,1.65) {3};
\node [blue] at (-4.15,-2.15) {3};
\node [blue] at (-1.85,3.15) {4};
\node [blue] at (-0.35,-0.65) {4};
\node [blue] at (-0.35,1.65) {5};
\node [blue] at (-1.85,-2.15) {5};
\node [blue] at (-5.15,0.85) {2};
\node [blue] at (-3.95,0.35) {7};
\node [blue] at (-2,0.5) {8};
\node [blue] at (-1.15,-0.4) {9};
\node [fill, circle, inner sep=1.3pt] at (-3,3.5) {};
\node [fill, circle, inner sep=1.3pt] (v8) at (-5,2.5) {};
\node [fill, circle, inner sep=1.3pt] at (-6,0.5) {};
\node [fill, circle, inner sep=1.3pt] (v9) at (-5,-1.5) {};
\node [fill, circle, inner sep=1.3pt] at (-3,-2.5) {};
\node [fill, circle, inner sep=1.3pt] at (-1,-1.5) {};
\node [fill, circle, inner sep=1.3pt] at (0,0.5) {};
\node [fill, circle, inner sep=1.3pt] at (-1,2.5) {};
\draw [blue](v8) -- (v9);
\node [red] at (-4.35,2.05) {$C$};
\node [orange] at (6.75,2) {$\beta_2$};
\draw [orange] (7,3) .. controls (6.9,1.85) and (7.5,0.25) .. (8.5,-0.5);
\draw [green] (8.5,1.5) .. controls (7.5,0.75) and (6.9,-0.85) .. (7,-2);
\node [green] at (7.4,-0.7) {$\beta_1$};
\end{tikzpicture}
    \caption{Dehn twist along $C$.\CORRECTED}
    \label{fig:Dehn_genus_2}
\end{figure}

Then the following path $\gamma$ is a representation path for $T_C$:
\begin{align*}
\gamma: (\tri, \ell) \overbar{6} (\tri', \ell') \overbar{(2\ 6)} T_C^{-1}(\tri, \ell).
\end{align*}
By \cref{lem:Dehn_SS}, this path is sign-stable on $\Omega_C$ with stable sign $(-)$ and the stretch factor $1$. 
\end{ex}

\paragraph{\textbf{Dependence of the sign stability on representation paths}}
As an aside, we give a counterexample to the following conjecture in \cite{IK19} which asserts that the sign stability is an intrinsic property of a mutation loop:

\begin{conj}[{\cite[Conjecture 1.3]{IK19}}]
Let $\gamma_i: (\tri_i, \ell_i) \to \phi^{-1}(\tri_i, \ell_i)$ $(i=1,2)$ be two paths which represent the same mapping class $\phi \in MC(\Sigma)$.
Then $\gamma_1$ is sign-stable if and only if $\gamma_2$ is.
\end{conj}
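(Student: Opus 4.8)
The plan is to reduce the biconditional to an invariance statement about the elementary moves relating two representation paths of a fixed mutation loop. By \cref{r:change of path}, if $\gamma_1$ and $\gamma_2$ both represent $\phi$, then $\gamma_2$ is obtained from $\gamma_1$ by finitely many moves of type (a) (common initial vertex, with the connecting path $\delta$ trivial) and type (b) (conjugation by a path $\delta: v_1 \to v_2$). Since sign-stability concerns only the eventual behaviour of the sign $\boldsymbol{\epsilon}_\gamma(\phi^n(w))$ as $n \to \infty$, and the underlying PL dynamical system $\phi$ on $\X^\uf_\bs(\bR^\trop)$ is intrinsic to the mutation loop, it suffices to fix one orbit $(\phi^n(w))_n$ and track how the sign sequence of the path changes under each move, with the $\bR_{>0}$-invariant test set transported between charts by the relevant change-of-chart maps. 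So I would try to prove that each elementary move carries a sign-stable path to a sign-stable one.

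First I would dispose of move (b). Here the change of chart is implemented by $\mu_\delta$, and by \cref{rem:intrinsicality} the coordinate expression $\phi_{(v_2)}$ is conjugate to $\phi_{(v_1)}$ via $\mu_\delta$, as recorded in \eqref{eq:PL_conjugation}. Transporting the test set by $\Omega \mapsto \mu_\delta(\Omega)$, the orbit $(\phi_{(v_2)}^n(\mu_\delta(w)))_n$ is the $\mu_\delta$-image of $(\phi_{(v_1)}^n(w))_n$, so the two sign sequences match wherever $\mu_\delta$ is differentiable. When $\delta$ is a single vertical edge (a permutation $\rho$), \cref{lem:pres_sign}(2) gives the exact identity $\boldsymbol{\epsilon}_\gamma(w) = \boldsymbol{\epsilon}_{\gamma'}(\rho(w))$, so the stable sign is merely relabelled; the general case then follows by composing such identities with the horizontal naturality of \cref{lem:pres_sign}(1). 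Likewise, deformation of $\gamma_1$ through a naturality square alters neither the horizontal labels nor the signs, again by \cref{lem:pres_sign}(1), so this part of move (a) is harmless.

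The remaining content of move (a) is the insertion or deletion of a \emph{trivial subpath}: a round trip $v \overbar{k} v' \overbar{k} v$, or the standard $(h+2)$-gon relation. Such an insertion does not alter the dynamical system $\phi$, but it lengthens the horizontal part of the path, so the sign sequences of $\gamma_1$ and $\gamma_2$ live in $\{+,-\}^{h_1}$ and $\{+,-\}^{h_2}$ with $h_1 \neq h_2$. To prove stability is preserved I would use \cref{lem:pres_mat_x} to write the extra elementary factors contributed by the inserted edges; for a round trip these are $E^{(v')}_{k,\epsilon'}E^{(v)}_{k,\epsilon}$, which I would hope to reduce to the identity with a well-defined strict sign once the orbit has entered a generic region away from the coordinate walls, so that the longer sign sequence stabilises exactly when the shorter one does.

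The hard part will be precisely this last reduction, and concretely the control of zero entries in the sign sequence. The sign attached to the inserted edge $v \overbar{k} v'$ at an orbit point $\phi^n(w)$ is $\sgn\big(x^{(v)}_k(\mu_{\gamma_{\leq i}}(\phi^n(w)))\big)$, so the inserted edges carry strict signs only as long as no orbit point accumulates onto the wall $\{x^{(v)}_k = 0\}$. The genuine obstacle is therefore to rule out that the $\bR_{>0}$-orbit closure of $\Omega$ under $\phi$ meets such a hyperplane: if it does, the longer path acquires a persistent $0$ in its sign and fails to stabilise to a strict sign even though the shorter path is sign-stable. This cannot be settled formally, because it depends on the position of the attracting eigenray of the projectivised dynamics of $\phi$ relative to the cluster walls traversed by the chosen path — whether the limit point lies on a coordinate hyperplane in the relevant chart. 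Pinning down this limit point (for instance for a Dehn twist as in \cref{ex:Dehn_SS}, whose attracting ray is the curve $C$ and therefore sits exactly on such a wall) is the crux of the whole question, and is where the expected invariance must be scrutinised most carefully.
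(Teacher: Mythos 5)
You cannot prove this statement, and the gap in your proposal is not a fixable technicality: the statement is a conjecture from \cite{IK19} that this paper explicitly \emph{refutes}, so the ``paper's own proof'' here is a counterexample, not a proof. Your handling of move (b) via \eqref{eq:PL_conjugation} and of naturality squares via \cref{lem:pres_sign} is correct, but the step you yourself flag as the hard part --- showing that an inserted round trip $v \overbar{k} v' \overbar{k} v$ acquires a well-defined strict sign once the orbit stabilises --- is exactly where the claim fails, and it cannot be repaired. The paper's counterexample is the Dehn twist $T_C$ of \cref{ex:Dehn_SS}: the path $\gamma: (\tri,\ell) \overbar{6} (\tri',\ell') \overbar{(2\ 6)} T_C^{-1}(\tri,\ell)$ is sign-stable on $\Omega^{(6,2)}_{(\tri,\ell)}$ by \cref{lem:Dehn_SS}, while the $\bs_\Sigma$-equivalent path $\gamma': (\tri,\ell) \xrightarrow{(6,9,9)} (\tri',\ell') \overbar{(2\ 6)} T_C^{-1}(\tri,\ell)$, obtained by inserting a round trip along the arc labelled $9$, is not: at the point $C \in \Omega^{(6,2)}_{(\tri,\ell)}$, which is fixed by $T_C$, the sign of $\gamma'$ is $(-,0,0)$ and hence never becomes strict along the (constant) orbit of $C$, precisely because $C$ is a non-$\X$-filling fixed point (\cref{def:X-filling}) whose shear coordinate along the inserted flip vanishes. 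Worse, $\gamma'$ takes the distinct strict signs $(+,+,+)$ and $(+,-,-)$ at $L_1 = C \sqcup \delta_1$ and $L_2 = C \sqcup \delta_2$ (see \cref{fig:Dehn_genus_2}), both of which lie in $\Omega^{(6,2)}_{(\tri,\ell)}$ and are fixed by $T_C$, so no single candidate for a stable sign of $\gamma'$ can exist at all.

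To your credit, your closing paragraph isolates this obstruction with complete precision, down to the correct example: the attracting ray of a Dehn twist is the curve $C$ itself, which lies on coordinate walls in suitable charts. What you missed is only the final inversion: this observation is not an ``unresolved difficulty'' in a proof attempt, it \emph{is} the disproof, and your remark that the question ``cannot be settled formally'' is too pessimistic --- it is settled, negatively, by the explicit computation above. The structural lesson the paper draws is the one your analysis suggests: path-independence of sign stability can fail whenever the domain $\Omega$ contains a non-$\X$-filling fixed point, and it is restored only under hypotheses excluding this, \emph{e.g.}\ for uniformly sign-stable mutation loops, whose attracting and repelling points are $\X$-filling by \cref{t:pA_SS_NS}.
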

Here is a counterexample.
Let us consider the Dehn twist $T_C$ as in \cref{ex:Dehn_SS_2}.
It has the following representation path:
\begin{align*}
    \gamma': (\tri, \ell) \xrightarrow{6, 9, 9} (\tri', \ell') \overbar{(2\ 6)} T_C^{-1}(\tri, \ell).
\end{align*}
While the path $\gamma$ is sign-stable on $\Omega_C$, the path $\gamma'$ is not.
Indeed, its sign at $C \in \Omega_C$ is $(+,0,0)$ and thus not strict.
This is a quite general phenomenon: it can occur when the domain $\Omega = \Omega_C$ of stability contains a non-$\X$-filling fixed point (recall \cref{def:X-filling}).
What is worse, it has different strict signs at $L_1 := (G_C \sqcup G_{\beta_1}, \delta_C + \delta_{\beta_1})$ and $L_2 := (G_C \sqcup G_{\delta_2}, \delta_C + \delta_{\beta_2})$, where $\beta_i$ are closed curves shown in \cref{fig:Dehn_genus_2} and $G_{\beta_i}$ are the closed geodesics which are homotopic to $\beta_i$, respectively.
Observe that $L_1, L_2 \in \Omega_C$, and the signs of $\gamma'$ at these points are $(-, -, -)$ and $(-, +, +)$,\CORRECTED\ respectively.

\section{Sign stability of pseudo-Anosov mapping classes: empty boundary case}\label{sec:pA_SS_NS}

Let $X$ be a compact Hausdorff space and $f:X \to X$ be a homeomorphism.
We say that the discrete dynamical system $(X,f)$ has \emph{North-South dynamics} (\emph{NS dynamics} for short) if there exist two distinct fixed points $x^+_f, x^-_f \in X$ of $f$ such that for any point $x \in X$, the followings hold:
\begin{itemize}
    \item if $x \neq x^-_f$, then $\lim_{n\to \infty} f^n(x) = x^+_f$;
    \item if $x \neq x^+_f$, then $\lim_{n\to \infty} f^{-n}(x) = x^-_f$.
\end{itemize}
The point $x^+_f$ (resp. $x^-_f$) is called the attracting (resp. repelling) point of $f$. 


In this section, we assume that $\Sigma$ is a punctured surface (\emph{i.e.}, it has no boundary components) and fix a marked hyperbolic surface $(F,f)$ of type $\Sigma$.
For an $\bR_{>0}$-invariant subset $\Omega \subset \eML(F)$, let
\begin{align*}
    \bP \Omega := (\Omega \setminus \{0\}) / \bR_{>0}.
\end{align*}
A point of the quotient space $\bP\Omega$ is sometimes identified with a ray in $\Omega$.
Our aim in this section is to prove the following:

\begin{thm}\label{t:pA_SS_NS}
Let $\Sigma$ be a punctured surface.
For a mapping class $\phi \in MC(\Sigma)$, the following conditions are equivalent:
\begin{enumerate}
    \item $\phi$ is generic pseudo-Anosov.
    \item $\phi$ is uniformly sign-stable.
    \item $\phi$ has NS dynamics on $\bP \eML(F)$ and the attracting and repelling points are $\cX$-filling.
\end{enumerate}
In this case, the cluster stretch factor $\lambda_\phi^\tri$ coincides with the pA stretch factor of $\phi$ for any ideal triangulation $\tri$.
\end{thm}

Recall that $\phi$ is generic pA if one of (or equivalently, both of) its pA pair is generic.

\subsection{Sign stability and the pseudo-Anosov property}\label{sec:(1)<=>(2)}
We prove the equivalence of (1) and (2) in \cref{t:pA_SS_NS}.
First we recall the following classical theorem due to W. Thurston:

\begin{thm}[\cite{Th88}]\label{thm:NS on U}
Let $\Sigma$ be a punctured surface and $\phi \in MC(\Sigma)$ a mapping class.
Then the mapping class $\phi$ is pA if and only if the action of $\phi$ on $\bP \eML_0(F)$ has NS dynamics.
Moreover, the attracting and repelling fixed points are given by the pA pair of $\phi$.
\end{thm}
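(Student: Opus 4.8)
The plan is to transport the statement into the classical language of projective measured foliations and then invoke Thurston's dynamical description of pseudo-Anosov maps. First I would assemble the chain of $MC(\Sigma)$-equivariant homeomorphisms
\[
\cU_\Sigma(\bR^\trop) \;\cong\; \cL^u(\Sigma,\bR) \;\cong\; \MF(\Sigma),
\]
where the first is the restriction of the isomorphism in \cref{t:lamination-cluster} (using \cref{lem:Casimir_U}) and the second is the restriction of $j^{\mathrm{MF}}$ in \cref{t:lamination_foliation}. Both are equivariant for the respective $MC(\Sigma)$-actions (the lamination charts are equivariant by construction, and $j^{\mathrm{MF}}$ is natural), and both intertwine the $\bR_{>0}$-scaling on the tropical side with the rescaling of transverse measures on $\MF(\Sigma)$ recalled just before \cref{t:NTclassification}. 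Passing to the quotients by $\bR_{>0}$ therefore yields an $MC(\Sigma)$-equivariant homeomorphism $\bS\cU_\Sigma(\bR^\trop) \cong \PMF(\Sigma)$ of compact spaces, so that having NS dynamics is preserved in both directions. This reduces the theorem to the assertion that $\phi$ is pseudo-Anosov if and only if its action on $\PMF(\Sigma)$ has NS dynamics whose attracting and repelling points are the projective classes of the pA pair.

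For the forward implication, suppose $\phi$ is pA with pA pair $([\cF_+,\mu_+],[\cF_-,\mu_-])$, so that $\phi[\cF_\pm]=[\cF_\pm]$ in $\PMF(\Sigma)$ and both classes are arational by \cref{t:NTclassification}. I would then recall the standard intersection-number argument: using the continuous geometric intersection pairing $i$ on $\MF(\Sigma)$ together with the invariance $i(\phi\cG,\phi\cH)=i(\cG,\cH)$ and the relations $\phi\cF_\pm=\lambda^{\pm1}\cF_\pm$, one computes for any $\cG$ that $i(\phi^n\cG,\cF_+)=\lambda^{-n}i(\cG,\cF_+)$ decays while $i(\phi^n\cG,\cF_-)=\lambda^{n}i(\cG,\cF_-)$ grows. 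Since a pair of transverse arational foliations fills $\Sigma$ and provides coordinates on $\MF(\Sigma)$, these estimates force $\phi^n[\cG]\to[\cF_+]$ whenever $[\cG]\neq[\cF_-]$, and symmetrically $\phi^{-n}[\cG]\to[\cF_-]$ whenever $[\cG]\neq[\cF_+]$. This is exactly the content of \cite{Th88} (see also \cite{FLP}), which I would cite for the quantitative estimates rather than reproduce.

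For the converse, assume $\phi$ acts on $\PMF(\Sigma)\cong\bS\cU_\Sigma(\bR^\trop)$ with NS dynamics and appeal to the Nielsen--Thurston trichotomy (\cref{t:NTclassification}). Under the standing hypotheses (S1)--(S3) the space $\PMF(\Sigma)$ is a sphere of positive dimension, so a periodic $\phi$, satisfying $\phi^N=\mathrm{id}$ for some $N$, would fix every point and cannot have NS dynamics; hence $\phi$ is not periodic. If $\phi$ were reducible, it would fix the projective class $[C]$ of a reducing multicurve, which by \cref{p:shear_non-vanishing}(2) is \emph{not} $\cX$-filling, forcing $[C]\in\{x_\phi^+,x_\phi^-\}$. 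I would then derive a contradiction by producing a third fixed projective class: a suitable power $\phi^k$ (which still has NS dynamics, with the same two fixed points) preserves each complementary subsurface of $C$, and restricting to such a piece it fixes a projective measured foliation supported there by Thurston's fixed-point theorem; this class is disjoint from $C$, hence distinct from $[C]$ and from the other fixed point, contradicting that NS dynamics admits exactly two fixed points. Thus $\phi$ is pA, and the forward direction identifies $x_\phi^\pm$ with the pA pair.

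The main obstacle is precisely this reducible exclusion in the converse: turning "a reducible class has too many fixed projective foliations" into a clean argument requires bookkeeping of the invariant subsurfaces of $C$ and the foliations they support (recursing through the induced reduction on each piece until one reaches periodic or pseudo-Anosov components), which is the genuine content lying beyond the bare trichotomy. In the cluster-theoretic language this is the statement that the two fixed points of an NS dynamics on $\bS\cU_\Sigma(\bR^\trop)$ must be $\cX$-filling in the sense of \cref{def:X-filling}, whereas a multicurve class fails to be, by \cref{p:shear_non-vanishing}. Since the theorem is attributed to \cite{Th88}, I would ultimately cite the classical dynamical characterization for this step rather than reprove it, reserving the novel part for the equivariant reduction to $\PMF(\Sigma)$ established in the first paragraph.
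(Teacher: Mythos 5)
The paper does not prove this statement at all: it is recalled verbatim as a classical theorem of Thurston and simply cited to \cite{Th88}, so your proposal—reducing to $\PMF(\Sigma)$ via the equivariant identifications $\cU_\Sigma(\bR^\trop)\cong\cL^u(\Sigma,\bR)\cong\MF(\Sigma)$ from \cref{lem:Casimir_U}, \cref{t:lamination-cluster} and \cref{t:lamination_foliation}, and then invoking the classical dynamical characterization—is exactly the intended reading and is essentially the same approach. Your sketch of the classical argument (intersection-number decay for the forward direction, exclusion of periodic and reducible classes for the converse) is sound, and you correctly identify that the reducible exclusion is the nontrivial step for which the citation to \cite{Th88} (or \cite{FLP}) is doing the work.
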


We call a mapping class \emph{pure} if it fixes each puncture. Let $PMC(\Sigma)$ denote the normal subgroup consisting of the pure mapping classes, which fits into the following exact sequence:
\begin{align*}
    1 \to PMC(\Sigma) \to MC(\Sigma) \xrightarrow{\sigma_\Sigma} \fS_P \to 1.
\end{align*}
The following lemma is a direct consequence of the Ivanov's work \cite{Iva}.

\begin{lem}[Proved in \cref{subsec:proof_1}]\label{lem:NS on X for pure}

If a pA mapping class $\phi$ is pure, then the action of $\phi$ on $\bP \eML(F)$ has NS dynamics, which is an extension of that on $\bP \ML_0(F)$.
\end{lem}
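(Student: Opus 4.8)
The plan is to pass to the geometric model. By \cref{thm:geodesic lamination} I identify $\bS\cX_\Sigma(\bR^\trop)$ with the projectivized space $\bS\eML(F)$ of measured geodesic laminations on a fixed hyperbolic structure $F$, and under this identification $\bS\cU_\Sigma(\bR^\trop)$ becomes the compact stratum $\bS\ML_0(F)$ (\cref{rmk:L^u=ML_0}). Note first that $\bS\eML(F)$ is compact: by \cref{prop:spectrum_ML}(1) the space $\eML(F)$ is homeomorphic to $\bR^{6g-6+3h}$, so its projectivization is a sphere. The candidate attracting and repelling points are the projective classes $[\cF_\pm]$ of the pA pair of $\phi$; these are arational measured foliations, hence by \cref{t:lamination_foliation} compact laminations lying in $\bS\ML_0(F)\subset\bS\eML(F)$, they are $\X$-filling by \cref{p:shear_non-vanishing}(1), and they satisfy $\phi([\cF_\pm])=[\cF_\pm]$. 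By \cref{thm:NS on U} the action of $\phi$ on the compact stratum $\bS\ML_0(F)$ already has NS dynamics with exactly these fixed points, so the task is to promote this to all of $\bS\eML(F)$.

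First I would exploit purity. Since $\phi$ fixes every puncture and carries no sign reflection, $\sigma_\Sigma^\uf(\phi)=\mathrm{id}$, so \cref{prop:split_action}(2) gives $\theta\circ\phi^x=\theta$; thus $\phi$ preserves each level set of the Casimir map $\theta:\cX_\Sigma(\bR^\trop)\to H_{\X,\Sigma}(\bR^\trop)$, and in particular fixes the total spiralling weights around the punctures. Decompose an arbitrary $\hL\in\eML(F)$ as $\hL=L_u\sqcup\hL_+$ into its compact and non-compact (spiralling) parts as in \cref{rem:geodesic_lamination_U}. The crucial dynamical observation is that under forward iteration the mass of $\phi^n\hL$, measured against a fixed $\X$-filling test curve, grows like $\lambda_\phi^n\to\infty$, whereas $\theta(\phi^n\hL)=\theta(\hL)$ stays constant; hence the $\theta$-coordinates of the normalized lamination tend to $0$, and every accumulation point of $([\phi^n\hL])_n$ lies in the compact stratum $\bS\ML_0(F)$.

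Having confined the accumulation set to $\bS\ML_0(F)$, I would finish by a standard invariance argument. The $\omega$-limit set of $[\hL]$ is a nonempty compact $\phi$-invariant subset of $\bS\ML_0(F)$, on which \cref{thm:NS on U} provides NS dynamics with source $[\cF_+]$ and sink $[\cF_-]$; since $[\cF_-]$ is repelling within the stratum, the forward orbit can accumulate on it only if $[\hL]=[\cF_-]$ (which by arationality and $\X$-fillingness has empty spiralling part, so it is the excluded point), whence the $\omega$-limit set is forced to be $\{[\cF_+]\}$ and the whole orbit converges there. The symmetric argument for $\phi^{-1}$ yields convergence to $[\cF_-]$ away from $[\cF_+]$, so $\phi$ has NS dynamics on $\bS\eML(F)$ extending that on $\bS\ML_0(F)$.

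The hard part will be making the estimate of the second paragraph rigorous for the non-compact part $\hL_+$: I must rule out that the spiralling leaves retain a fixed positive share of the projective mass in the limit or align transversally to $[\cF_+]$. Establishing the uniform lower bound $\asymp\lambda_\phi^n$ on the growth of $\phi^n\hL$ against a filling curve while $\theta$ stays bounded, together with the matching estimate controlling the spiralling leaves, is precisely the content I would extract from Ivanov's asymptotic analysis of $\phi^n$ on measured laminations \cite{Iva}, and it is this input that upgrades the heuristic into a proof.
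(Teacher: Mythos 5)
Your high-level picture is reasonable and you correctly isolate the hard point, but the proof has two genuine gaps, and the second one is fatal to the concluding argument as written. First, the growth estimate of your second paragraph is not something you can simply ``extract from Ivanov'': Ivanov's asymptotic analysis \cite{Iva} concerns the action of a pure pA class on measured foliations/laminations of a surface, i.e.\ on the compact stratum, and does not directly apply to laminations with non-compact spiralling leaves in $\eML(F)$. The paper bridges exactly this gap with a device you never mention: for each spiralling signature $\sigma\in\{+,-\}^P$ it constructs an injective, $PMC(\Sigma)$-equivariant map $D:\eML^{(\sigma)}(F)\hookrightarrow\ML(DF)$ into the laminations on the doubled closed surface (straighten the spiralling leaves and glue $\iota$-related pairs), observes that $D\phi$ is pure in Ivanov's sense, and then imports \cite[Theorem A.1, Lemma A.4]{Iva} wholesale to get $[(D\phi)^n(D\hL)]\to[D(L_\phi^+)]$. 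Without the doubling (or some substitute giving the full asymptotic $i(\phi^n\hL,\cdot)/\lambda_\phi^n\to i(\hL,\cF_-)\,i(\cF_+,\cdot)$ on all of $\eML(F)$), your key estimate is an assertion, not a proof.

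Second, even granting that every accumulation point of $([\phi^n\hL])_n$ lies in $\bS\ML_0(F)$, your final $\omega$-limit argument does not close. The statement ``$[\cF_-]$ is repelling \emph{within the stratum}, so the orbit can accumulate on it only if $[\hL]=[\cF_-]$'' is a non sequitur: the orbit approaches the stratum from outside, and a fixed point that is repelling along an invariant subspace can perfectly well attract orbits in the transverse directions (this is what a saddle looks like). Ruling that out -- i.e.\ showing $[\cF_-]$ is repelling in all of $\bS\eML(F)$ and that the $\omega$-limit set cannot be $\{[\cF_-]\}$ or $\{[\cF_+],[\cF_-]\}$ -- is essentially equivalent to the lemma itself, so you cannot get it for free from NS dynamics on the stratum. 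Note also that the paper does need a transverse-direction analysis of exactly this kind elsewhere: the implication $(1)\Rightarrow(3)$ of \cref{t:pA_SS_NS} is proved via the linear-algebra \cref{lem:asympt to U}, whose hypotheses (finite order of the block $D$, eigenvalue $\lambda>1$ on the compact part) are what actually control the non-compact directions. If you want to avoid the doubling, that is the kind of quantitative input you would have to supply.
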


\begin{proof}[Proof of $(1) \Longleftrightarrow (2)$ in \cref{t:pA_SS_NS}]
Let $\phi \in MC(\Sigma)$ be a generic pA mapping class and take a representation path $\gamma: (\tri, \ell) \to \phi^{-1}(\tri, \ell)$ in $\bTri(\Sigma)$.
By \cref{thm:NS on U} the mapping class $\phi$ has NS dynamics on $\bP \ML_0(F)$, whose attracting and repelling points are written as $[L_\phi^+]$ and $[L_\phi^-]$, respectively.
By \cref{p:shear_non-vanishing} (1), the sign $\boldsymbol{\epsilon}_\gamma(L_\phi^+) =: \boldsymbol{\epsilon}^+_\gamma$ of the point $L_\phi^+$ is strict.
Therefore, the cone
\[\cC^{\boldsymbol{\epsilon}^+_\gamma}_{\tri} := \oline{\{ L \in \eML(F) \mid \boldsymbol{\epsilon}_\gamma(L) = \boldsymbol{\epsilon}^+_\gamma \}}\]
has the maximal dimension and $L^+_\phi \in \interior \cC^{\boldsymbol{\epsilon}^+_\gamma}_{\tri}$. Moreover, since $[L^+_\phi]$ is a fixed point of $\phi$, 
\vspace{-2mm}
\begin{align}\label{eq:sign_power_phi}
\boldsymbol{\epsilon}_{\gamma^r}(L^+_\phi) = (\overbrace{\boldsymbol{\epsilon}^+_\gamma, \dots, \boldsymbol{\epsilon}^+_\gamma}^{r}) =: \boldsymbol{\epsilon}_\gamma^{+,r} 
\end{align}
for any $r \in \bZ_{>0}$.
Here, $\gamma^r$ denotes the path from $(\tri, \ell)$, whose labeles is the $r$ times iteration of the labeles of $\gamma$.
Hence $L^+_\phi \in \interior \cC^{\boldsymbol{\epsilon}_\gamma^{+,r}}_{\tri} \subset \interior \cC^{\boldsymbol{\epsilon}^+_\gamma}_{\tri}$.

Take $r \in \bZ_{>0}$ so that the mapping class $\phi^r$ is pure. 
It has NS dynamics on $\bP\eML(F)$ and its attracting and repelling points are $[L_\phi^+]$ and $[L_\phi^-]$ respectively by \cref{lem:NS on X for pure}.
Hence for any $L \in \eML(F) \setminus [L^-_\phi]$, there exists a positive integer $m_0 \in \bZ$ such that $\phi^{mr}(\hL) \in \interior \cC^{\boldsymbol{\epsilon}^{+,r}_\gamma}_{\tri}$
for all integer $m \geq m_0$.
By \eqref{eq:sign_power_phi}, it implies that $\phi^n(L) \in \interior \cC^{\boldsymbol{\epsilon}^+_\gamma}_{\tri}$ for all $n \geq m_0r$. 
It is nothing but the sign stability of $\phi$ with the stable sign $\boldsymbol{\epsilon}^{+}_\gamma$.

Conversely, assume that $\phi$ is not generic pA.
Then by Nielsen--Thurston classification (\cref{t:NTclassification}), it is either periodic, reducible, or non-generic pA.
In the periodic case, $\phi^r$ is the identity for some integer $r >0$ so cannot be uniformly sign-stable. 

In the reducible case, $\phi$ fixes a lamination whose support consists of closed curves $L \in \ML_0(F)$.
Then \cref{p:shear_non-vanishing} (3) tells us that there exists an ideal triangulation $\tri$ and an ideal arc $\alpha \in \tri$ such that $\sfx^{\tri}_{\alpha}(L) =0$.
Then, for a representation path $\gamma: (\tri, \ell) \to \phi^{-1}(\tri, \ell)$, the path 
\begin{align*}
    \gamma': (\tri,\ell) \overbar{\ell^{-1}(\alpha)} (\tri',\ell) \overbar{\ell^{-1}(\alpha)} (\tri,\ell) \xrightarrow{\ \gamma\ } \phi^{-1}(\tri, \ell)
\end{align*}
is clearly a representation path again.
Then $\boldsymbol{\epsilon}_{\gamma'}(L)$ is not a strict sign at the first two steps, and hence $\phi$ cannot be uniformly sign-stable. 
\end{proof}

\subsection{NS dynamics on the space of \texorpdfstring{$\cX$}{X}-laminations and the pseudo-Anosovness}\label{sec:(1)<=>(3)}
Our aim here is to prove the equivalence of (1) and (3) in \cref{t:pA_SS_NS}.
From \cref{thm:NS on U}, 
the implication (3) $\Too$ (1) is clear since the action of a mapping class on $\eML(F)$ restricts to that on $\ML_0(F)$.
It remains to prove the implication (1) $\Too$ (3). Although this statement seems to be well-known to specialists, we give a proof here based on the sign stability of a pA mapping class (\emph{i.e.}, we are actually going to show $(1)\,\&\,(2) \Too (3)$). 

First, we discuss a suitable block decomposition of the presentation matrix of a mapping class $\phi$. In order to obtain a natural one from the geometry, the following observations are useful.  

\smallskip 

\paragraph{\textbf{(1) Tangent action of $MC(\Sigma)$ on $\eML(F)$}}
The shear coordinates endows the space $\eML(F)$ with a \emph{tangential structure} in the sense of \cite{Bon}. In particular, the tangent space $T_L \eML(F)$ is defined to be the cone of one-sided directional derivatives for each $L \in \eML(F)$. 
Since a mapping class $\phi \in MC(\Sigma)$ acts on $\eML(F)$ preserving this tangential structure, it induces the tangent map
\begin{align*}
    (d\phi)_{L}: T_{L} \eML(F) \to T_{\phi(L)} \eML(F)
\end{align*}
for any $L \in \eML(F)$. 

Given a labeled triangulation $(\tri,\ell)$ of $\Sigma$, the shear coordinate system $\boldsymbol{\sfx}^{(\tri,\ell)}$ induce a tangential isomorphism
\begin{align*}
    T_L \eML(F) \xrightarrow{\sim} \bR^{I}.
\end{align*}
Via this isomorphism, we can import the linear structure of $\bR^{I}$ into $T_L \eML(F)$. The resulting vector space will be denoted by $T^{(\tri,\ell)}_L \eML(F)$. 
Moreover if $L$ is chosen so that the coordinate expression of $\phi$ with respect to the labeled triangulation $(\tri, \ell)$ (which is a PL map with finitely many half-planes of discontinuity) is differentiable at $L$, then the tangential map $(d\phi)_L$ preserves this linear structure. Thus, we obtain a linear map
\begin{align*}
    (d\phi)^{(\tri,\ell)}_L: T^{(\tri,\ell)}_L \eML(F) \to T^{(\tri,\ell)}_{\phi(L)} \eML(F).
\end{align*}
If $\gamma: (\tri, \ell) \to \phi^{-1}(\tri, \ell)$ is a representation path of $\phi$ and $L \in \interior \cC^{\bep}_\gamma$ with a strict sign $\bep$, then $E_\gamma^{\bep}$ gives the presentation matrix of $(d\phi)_L^{(\tri,\ell)}$. 

\smallskip

\paragraph{\textbf{(2) Block decomposition of the tangent action}}
Now we fix $L_0 \in \interior \cC^{\bep}_\gamma \cap \ML_0(F)$ with a strict sign $\bep$. 
Since the map $\theta^\mathrm{ML}: \eML(F) \to \bR^P$ is a linear map in any shear coordinate system by \cref{lem:ML_Casimir}, 
we have an exact sequence
\begin{align*}
    0 \to T_{L_0}^{(\tri,\ell)} \ML_0(F) \to T_{L_0}^{(\tri,\ell)} \eML(F) \xrightarrow{(d \theta^\mathrm{ML})_{L_0}} \bR^P \to 0
\end{align*}
of vector spaces by differentiating the exact sequence
\eqref{eq:lam_exact_seq}, 
and a similar sequence for the tangent spaces at $\phi (L_0)$. 

Now we assume $\bep_\gamma(\phi(L_0))$ is also strict.
From the $MC(\Sigma)$-equivariance of \eqref{eq:lam_exact_seq}, we see that these sequences are $(d\phi)_{L_0}^{(\tri,\ell)}$-equivariant, where the action on $\bR^P$ is given by $\sigma_\Sigma(\phi)$.
Therefore, by choosing linear sections $s_{L_0}: \bR^P \to T^{(\tri,\ell)}_{L_0} \eML(F)$ and $s_{\phi(L_0)}: \bR^P \to T^{(\tri,\ell)}_{\phi(L_0)} \eML(F)$, we get a block-decomposition of the form
\begin{align}\label{eq:block-decomp}
    (d\phi)_{L_0}^{(\tri,\ell)} = 
    \begin{pmatrix}
    (d\phi^u)_{L_0}^{(\tri,\ell)} & \ast \\
    0 & \sigma_\Sigma(\phi)
    \end{pmatrix}
\end{align}
with respect to the direct sum decompositions $T_{L_0}^{(\tri,\ell)}\eML(F) = T_{L_0}^{(\tri,\ell)} \ML_0(F) \oplus s_{L_0}(\bR^P)$ and $T_{\phi(L_0)}^{(\tri,\ell)} \eML(F) = T_{\phi(L_0)}^{(\tri,\ell)} \ML_0(F) \oplus s_{\phi(L_0)}(\bR^P)$.
Here $\phi^u$ is the restriction of the action of $\phi$ to $\ML_0(F)$. 

\bigskip

With these preparations, let us consider a generic pA mapping class $\phi \in MC(\Sigma)$. 
From the implication (1) $\Too$ (2) in \cref{t:pA_SS_NS}, any representation path $\gamma: (\tri, \ell) \to \phi^{-1}(\tri, \ell)$ of $\phi$ is sign-stable on $\eML_\bQ(F)$. 
Then for each point $L \in \eML(F)$, there exists a positive integer $n_0$ such that $\phi^n(L) \in \interior \cC_\tri^{\bep^+_\gamma}$ for all $n \geq n_0$.
Thus we have 
\begin{align*}
    \bsfx^{(\tri,\ell)}(\phi^n(\phi^{n_0}(L))) = (E_\gamma^{\epsilon_\gamma^+})^n\, \bsfx^{(\tri,\ell)}(\phi^{n_0}(L))
\end{align*}
for all $n \geq 0$.
In other words, the action of $\phi$ is a linear dynamical system in the stable range $n \geq n_0$. 
By \cref{t:pA_SS_NS} we know that the linear action of $\phi^u$ on the cone $\cC^{\bep^+_\gamma}_\tri \cap \ML_0(F)$ has North dynamics with attracting point $L^+_\phi$.
Since $L^+_\phi$ satisfies the assumptions of the above, we can take the decomposition \eqref{eq:block-decomp} for this point.
Interpreting these in terms of the corresponding presentation matrix, our assertion is reduced to the following lemma, which is just a matter of linear algebra:

\begin{lem}[Proved in \cref{subsec:proof_2}]\label{lem:asympt to U}
Let $N, N', h$ be positive integers satisfying $N = N' +h$ and
let $E \in \mathrm{GL}(N; \bR)$ be a matrix of the form
\[
E = 
\begin{pmatrix}
A & B\\
0 & D
\end{pmatrix}, \quad A \in \mathrm{GL}(N'; \bR),\quad B \in \mathrm{Mat}(h \times N';\bR), \quad D \in \mathrm{GL}(h; \bR).
\]
Assume that
\begin{itemize}
    \item[(a)] the matrix $D$ has finite order $s$;
    \item[(b)] there exists a cone $\cC \subset \bR^N$ which is invariant under the action of $E$; 
    \item[(c)] there exists a unique attracting point $[u_+] \in \bP(\cC \cap \bR^{N'})$ for the action of $A$ on $\bP(\cC \cap \bR^{N'})$, where $u_+ \in \bR^{N'}$ is a unit eigenvector of $A$ with eigenvalue $\lambda>1$. 
\end{itemize}
Then for any point $[u] \in \bP\cC$, we have $E^n([u]) \to [u_+]$ as $n \to \infty$. 
\end{lem}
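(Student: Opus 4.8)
The plan is to peel off the finite-order base action, reduce to an affine dynamical system on the fibre $\bR^{N'}$, and then read the asymptotics off the dominant eigendirection of $A$, using the cone to guarantee that the expanding component does not degenerate. Throughout I write a vector of $\bR^N=\bR^{N'}\oplus\bR^h$ as $u=(x,y)$ and let $\pi_h\colon\bR^N\to\bR^h$ be the projection to the base.

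First I would reduce to the case $D=I$. It suffices to prove $(E^s)^n([u'])\to[u_+]$ for every $u'\in\cC$, because $E^{sn+r}[u]=(E^s)^n(E^r[u])$ and $E^r[u]\in\bS\cC$ by the $E$-invariance of $\cC$. Now $E^s$ is again block upper-triangular with diagonal blocks $A^s$ and $D^s=I$, and it satisfies hypotheses (b) and (c) with $A$ replaced by $A^s$: the projective orbits of $A^s$ on $\bS(\cC\cap\bR^{N'})$ are subsequences of those of $A$, so $[u_+]$ is still the unique attracting point, now with eigenvalue $\lambda^s>1$. Hence we may assume $s=1$, i.e. $D=I$. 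Writing $E^nu=(v_n,y)$, the base component is the constant $y$ (so $\|\pi_h(E^nu)\|$ is bounded), while the fibre component obeys the affine recursion $v_{n+1}=Av_n+By$, $v_0=x$, with closed form $v_n=A^nx+\big(\sum_{k=0}^{n-1}A^k\big)By$.

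By (c) the projective action of $A$ on $\cC\cap\bR^{N'}$ attracts everything to $[u_+]$, which forces $\lambda=\rho(A)$ to be a simple, strictly dominant eigenvalue with $\lambda$-eigenspace $\bR u_+$. Let $P$ be the associated spectral projection and $\ell$ the left $\lambda$-eigen-functional normalized by $\ell(u_+)=1$, so that $Pv=\ell(v)u_+$, $\ell A=\lambda\ell$, and $\|A^n(I-P)\|=o(\lambda^n)$. Dividing the closed form by $\lambda^n$ and passing to the limit gives $v_n/\lambda^n\to\big(\ell(x)+\tfrac{1}{\lambda-1}\ell(By)\big)u_+=\widetilde{\ell}(u)\,u_+$, where $\widetilde{\ell}:=\big(\ell,\tfrac{1}{\lambda-1}\ell B\big)$ is, up to scale, the unique left $\lambda$-eigen-functional of $E$ extending $\ell$. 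Consequently, whenever $\widetilde{\ell}(u)\neq0$ we have $\|v_n\|\sim|\widetilde{\ell}(u)|\lambda^n\to\infty$, so the bounded base component becomes projectively negligible and $[E^nu]=[(v_n,y)]\to[(u_+,0)]=[u_+]$.

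It remains to establish the non-degeneracy $\widetilde{\ell}(u)\neq0$, and this is where I expect the main difficulty. For interior points I would invoke the cone Perron--Frobenius theorem (Vandergraft): since $\cC$ is a pointed full cone invariant under $E$ with $\rho(E)=\lambda$ simple, the dual cone $\cC^{*}$ contains a $\lambda$-eigen-functional of $E^{\mathsf T}$, which by simplicity is $\pm\widetilde{\ell}$; hence $\widetilde{\ell}\geq0$ on $\cC$ and $\widetilde{\ell}>0$ on $\interior\cC$. This already settles every interior point, which is exactly what the application needs, since sign stability drives every orbit into $\interior\cC^{\mathrm{stab}}_\gamma$. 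The genuinely delicate point is a boundary point $u$ with $\widetilde{\ell}(u)=0$: its orbit is trapped in the proper $E$-invariant face $\cC\cap\ker\widetilde{\ell}$, which omits $[u_+]$. If $\|E^nu\|\to\infty$ one still wins, because the bounded base component gives $\omega(u)\subset\bS(\cC\cap\bR^{N'})$, and the global attraction in (c) together with internal chain transitivity forces $\omega(u)=\{[u_+]\}$; the real obstacle is excluding a bounded orbit with $y\neq0$ accumulating on an $E$-fixed projective point of $\cC$ different from $[u_+]$. I would rule this out by downward induction on the dimension of the faces of $\cC$ — each face being again a pointed $E$-invariant cone on which the dominant eigenvalue has strictly dropped — anchored by the strict positivity $\ell>0$ on $(\cC\cap\bR^{N'})\setminus\{0\}$, which itself follows from (c) because any zero of $\ell$ there would produce an $A$-invariant face missing $[u_+]$.
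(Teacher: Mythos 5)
Your route is genuinely different from the paper's. The paper makes one direct estimate: writing $E^n(w)=\bigl(A^n(u)+\sum_{k=0}^{n-1}A^{n-k-1}BD^k(v),\,D^n(v)\bigr)$, it sets $\mathbf{a}(u):=\lim_n\|A^n(u)/\lambda^n\|$, uses the finite order of $D$ only to bound the finitely many seeds $BD^k(v)$ uniformly, and shows $E^n(w)/\lambda^n\to\bigl(\mathbf{a}(u)+\sum_{k\geq 0}\mathbf{a}(BD^k(v))/\lambda^{k+1}\bigr)u_+$ by splitting the geometric sum at a threshold depending on $v$; no spectral decomposition of $A$ is invoked. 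The gap in your version is the step ``(c) forces $\lambda=\rho(A)$ to be a simple, strictly dominant eigenvalue.'' Hypothesis (c) only concerns the action of $A$ on $\bS(\cC\cap\bR^{N'})$, and nothing in (a)--(c) forces $\cC\cap\bR^{N'}$ to be full-dimensional in $\bR^{N'}$, so $A$ may have spectrum of modulus $>\lambda$ transverse to the span of the cone; even on that span, a Jordan block at $\lambda$ is not excluded (for $A=\left(\begin{smallmatrix}\lambda&1\\0&\lambda\end{smallmatrix}\right)$ every projective orbit in the closed positive quadrant still converges to $[e_1]$). In either case $\|A^n(I-P)\|=o(\lambda^n)$ fails and your normalization $\ell(u_+)=1$ may be impossible, so the closed-form limit you compute need not exist. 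Everything you build afterwards (the functional $\widetilde{\ell}$, the dual-cone positivity) rests on this unproved spectral input.

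On the other hand, you are right that the non-degeneracy of the limit coefficient is the real issue, and here you see something the paper's proof passes over: it concludes $[E^n(w)]\to[u_+]$ from $E^n(w)/\lambda^n\to c\,u_+$ without checking $c\neq 0$. But your proposed fix by downward induction on the faces of $\cC$ cannot be completed from (a)--(c) alone: with $B=0$ and $A=\left(\begin{smallmatrix}2&1\\0&1\end{smallmatrix}\right)$ on the positive octant, all hypotheses hold, yet $w=(0,0,1)\in\cC$ is a periodic point of $E$ whose projective orbit never approaches $[u_+]$. So the exceptional boundary points you worry about genuinely exist, and the lemma is only valid (and only used) at points whose orbit acquires a nonzero expanding component --- e.g.\ points entering $\interior\cC$ --- where both your dual-cone argument and the paper's estimate do suffice. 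If you want to salvage your approach, add the hypotheses you actually need (that $\lambda$ is the simple dominant eigenvalue of $A$ on the span of $\cC\cap\bR^{N'}$ and that the relevant orbits meet the interior), or switch to the paper's bare-hands estimate, which at least isolates all the analysis in the single quantity $\mathbf{a}(\cdot)$.
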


\begin{proof}[Proof of $(1) \Too (3)$ in \cref{t:pA_SS_NS}]
Let $N:=|I(\Sigma)|$, $N':=\dim \ML_0(F) = |I(\Sigma)| - |P|$, $\cC:=\cC^{\bep^+_\gamma}_\tri$, and $E$ the block-decomposed matrix \eqref{eq:block-decomp} (with a basis compatible with the decomposition).
The assumptions are satisfied by \cref{thm:NS on U}, where $[u_+]$ is the coordinate vector of $[(G_+,\mu_+)]$ and the corresponding eigenvalue is given by the pA stretch factor $\lambda >1$.
Hence $\phi$ has a unique attracting point in $\bP \eML(F)$. 

By considering the action of $\phi^{-1}$ near the class $[(G_-,\mu_-)]$, the same argument can be applied and we see that $\phi$ has a unique repelling point.
From the definition of the genericity of pA and \cref{p:shear_non-vanishing} (1), its pA pair consists of $\cX$-filling measured geodesic laminations.
Thus the assertion is proved.
\end{proof}

\begin{figure}[h]
    \centering
    \begin{tikzpicture}[auto]
    \draw[very thick, red] (-1.3,0) .. controls (1,0) and (1,-0.08) .. (1.5,0.5);
    \draw[very thick, red] (-1.3,0) .. controls (1,0) and (1,0.08) .. (1.5,-0.5);
    \node at (0.5,0.25) {$v$};
    \node at (1.75,0.75) {$e_1$};
    \node at (1.75,-0.75) {$e_2$};
    \node at (-1,0.25) {$e_0$};
    \node at (5.5,0) {$\nu(e_0) = \nu(e_1) + \nu(e_2)$};
    \end{tikzpicture}
    \caption{Switch condition.}
    \label{fig:switch_cond}
\end{figure}

\begin{proof}[Proof of coincidence of the two stretch factors in \cref{t:pA_SS_NS}]
We verify the coincidence of the cluster stretch factor $\lambda_\phi^\tri$ (\cref{d:cluster stretch factor}) and the pA stretch factor $\lambda_\phi$ of the generic pA mapping class $\phi$.
By the block decomposition \eqref{eq:block-decomp}, the pA stretch factor $\lambda_\phi$ is also an eigenvalue the stable presentation matrix of $\phi$ since the coordinate vector of $(G_+, \mu_+)$ is its eigenvector.
Thus, we have $\lambda_\phi \leq \lambda_\phi^\tri$ since the cluster stretch factor $\lambda_\phi^\tri$ is a spectral radius of the stable presentation matrix.

We are going to show the reversed inequality based on the fact that the latter is equal to the spectral radius of the \emph{transition matrix} of the \emph{invariant train track} $\tau_\phi$ of $\phi$ (see \cite{BH95}). 

Recall that a train track $\tau$ is a certain trivalent graph on $\Sigma$ equipped with a smoothing structure on each vertex, see \cref{fig:switch_cond}. For a detail, we refer the reader to \cite{PH}, the appendix of \cite{IK2}, or \cite{Kan23}.
Let $V(\tau) \subset \bR^{\{\text{\scriptsize edges of $\tau$}\}}_{\geq 0}$ denote the subset defined by the \emph{switch conditions} shown in \cref{fig:switch_cond} at each vertex, called the cone of measures on $\tau$. It is known that $V(\tau)$ is naturally identified with a subcone of $\ML_0(F)$.

A train track $\tau$ is said to be \emph{$\phi$-invariant} if $\phi(\tau)$ can be collapsed to $\tau$ preserving the smoothing structure at vertices. It is known that any pseudo-Anosov mapping class $\phi$ has an invariant train track $\tau_\phi$ such that $L_\phi^+ \in V(\tau_\phi)$ and $L_\phi^+ \in V(\phi(\tau_\phi))
$. 

Since $\phi$ is generic, the invariant track $\tau_\phi$ is also \emph{complete}, namely, there is no train track $\tau'$ containing $\tau_\phi$ as a proper subgraph such that $V(\tau) \cap \bR_{>0}^{\text{edges of $\tau$}} \neq \emptyset$. 
Indeed, we cannot add a new edge to $\tau_\phi$ so that the result is still a train track, since the complementary regions of $\tau_\phi$ are either a disk with three cusps or a once-punctured disk with one cusp by the genericity of $\phi$ (\emph{cf}. the construction of the Markov partition from the invariant track \cite{BH95}). As a result, the cone $V(\tau_\phi)$ is full-dimensional in $\ML_0(F)$.

The transition matrix is a certain linear isomorphism $T_\phi: \bR^{\{\text{\scriptsize edges of $\phi(\tau_\phi)$}\}} \to \bR^{\{\text{\scriptsize edges of $\tau_\phi$}\}}$ describing the collapsing 
such that $T_\phi(V(\phi(\tau_\phi))) \subset V(\tau_\phi)$.

Consider the exact sequence
\begin{align*}
    0 \to T_{L_\phi^+} V(\phi(\tau_\phi)) \to T_{L_\phi^+} \bR^{\{\mbox{\scriptsize edges of $\tau_\phi$}\}} \to T_{L_\phi^+} \bR^{\{\mbox{\scriptsize edges of $\tau_\phi$}\}} / T_{L_\phi^+} V(\tau_\phi) \to 0.
\end{align*}
Taking a section of $T_{L_\phi^+} \bR^{\{\text{\scriptsize edges of $\tau_\phi$}\}} / T_{L_\phi^+} V(\tau_\phi)$, we obtain a block decompoistion of $T_\phi$.
The submatrix of $T_\phi$ in the direction of $T_{L_\phi^+} V(\phi(\tau_\phi)) \cong T_{L_\phi^+} \ML_0(F)$ coincides with $(d\phi^u)^{(\tri,\ell)}_{L_\phi^+}$ by taking a suitable basis \cite[Section 3.4]{Kan23}.
Thus, we have $\lambda_\phi^\tri \leq \lambda_\phi$.
\end{proof}



The proof of \cref{t:pA_SS_NS} is completed.

\begin{ex}\label{ex:4_sph}
Here is an example of a generic pA mapping class.
Consider a fourth-punctured sphere ($g=0$, $h=4$) with a labeled ideal triangulation $(\tri,\ell)$ shown in \cref{fig:4_sph}.
\begin{figure}[h]
    \centering
    \begin{tikzpicture}
    \begin{scope}[scale=0.9]
    \draw (-3.5,0) ellipse (2.5 and 2.5);
    \draw[blue] (-1,0) arc [start angle=0, end angle=-180, x radius=2.5cm, y radius=1cm];
    \draw[blue, dashed] (-1,0) arc [start angle=0, end angle=180, x radius=2.5cm, y radius=1cm];
    \draw[blue] (-3.5,2.5) arc [start angle=90, end angle=270, x radius=1cm, y radius=2.5cm];
    \draw[blue, dashed] (-3.5,2.5) arc [start angle=90, end angle=-90, x radius=1cm, y radius=2.5cm];
    \node [fill, circle, inner sep=1.5pt] at (-4.4,-0.95) {};
    \node [fill, circle, inner sep=1.5pt, gray] at (-2.55,0.95) {};
    \node [fill, circle, inner sep=1.5pt, gray] at (-5.5,0.6) {};
    \node [fill, circle, inner sep=1.5pt] at (-1.5,-0.6) {};
    \node [blue] at (-4.4,-1.9) {\small 1};
    \node [blue] at (-5.3,-0.45) {\small 2};
    \node [blue!50] at (-3.5,1.2) {\small 3};
    \node [blue] at (-4.4,1.9) {\small 4};
    \node [blue] at (-3.35,-1.25) {\small 5};
    \node [blue!50] at (-1.4,0.85) {\small 6};
    \node at (-4.1,-0.6) {$p$};
    \end{scope}
    \begin{scope}[scale=1.1]
    \draw (0.05,0.05) -- (0.95,0.05);
    \draw (0.05,-0.05) -- (0.95,-0.05);
    \draw [blue](4.5,1.5) coordinate (v1) {} -- (4.5,-1.5) coordinate (v2) {} -- (3,0) -- (v1) -- (6,0) -- (v2);
    \draw [blue](4.5,-1.5) .. controls (3.5,-2) and (2,-1) .. (2,0) .. controls (2,1) and (3.5,2) .. (4.5,1.5);
    \node [fill, circle, inner sep=1.5pt] at (4.5,1.5) {};
    \node [fill, circle, inner sep=1.5pt] at (6,0) {};
    \node [fill, circle, inner sep=1.5pt] at (4.5,-1.5) {};
    \node [fill, circle, inner sep=1.5pt] at (3,0) {};
    \node [blue] at (2.45,1.25) {\small 1};
    \node [blue] at (3.6,-0.9) {\small 2};
    \node [blue] at (3.6,0.9) {\small 3};
    \node [blue] at (4.3,0) {\small 4};
    \node [blue] at (5.4,-0.9) {\small 5};
    \node [blue] at (5.4,0.9) {\small 6};
    \node at (4.85,-1.65) {$p$};
    \end{scope}
    \end{tikzpicture}
    \caption{A fourth-punctured sphere with a labeled triangulation. A planar picture is shown in the right, regarding $\bP^2=\bR^2 \cup \{\infty\}.$}
    \label{fig:4_sph}
\end{figure}
Consider the mapping class $\phi:=h_2 \circ h_5^{-1}$, where $h_5^{-1}$ is the left\CORRECTED\ half-twist along the arc $\ell(5)$ and $h_2$ is the right\CORRECTED\ half-twist along $\ell(2)$. 
It is an example of a pA mapping class, which realizes the smallest stretch factor among those in $MC(\Sigma)$. See \cite[Section 15.1]{FM}.
We show representation paths of $h_2$ and $h_5^{-1}$ in \cref{fig:pA_4_sph}.
\begin{figure}[h]
    \centering
    \hspace{-1.1cm}
    \begin{tikzpicture}[scale=.91]
\draw [blue](4.5,1.5) coordinate (v1) -- (4.5,-1.5) coordinate (v2) -- (3,0) -- (v1) -- (6,0) -- (v2);
\draw [blue](4.5,-1.5) .. controls (3.5,-2) and (2,-1) .. (2,0) .. controls (2,1) and (3.5,2) .. (4.5,1.5);
\node [fill, circle, inner sep=1.5pt] at (4.5,1.5) {};
\node [fill, circle, inner sep=1.5pt] at (6,0) {};
\node [fill, circle, inner sep=1.5pt] at (4.5,-1.5) {};
\node [fill, circle, inner sep=1.5pt] at (3,0) {};
\node [blue] at (2.45,1.25) {\scriptsize 1};
\node [blue] at (3.6,-0.9) {\scriptsize 2};
\node [blue] at (3.6,0.9) {\scriptsize 3};
\node [blue] at (4.3,0) {\scriptsize 4};
\node [blue] at (5.4,-0.9) {\scriptsize 5};
\node [blue] at (5.4,0.9) {\scriptsize 6};

\node [fill, circle, inner sep=1.5pt] (v3) at (11,1.5) {};
\node [fill, circle, inner sep=1.5pt] (v6) at (12.5,0) {};
\node [fill, circle, inner sep=1.5pt] (v5) at (11,-1.5) {};
\node [fill, circle, inner sep=1.5pt] (v4) at (9.5,0) {};
\node [blue] at (9.95,0.8) {\scriptsize 1};
\node [blue] at (10.3,-0.5) {\scriptsize 2};
\node [blue] at (10.75,0) {\scriptsize 3};
\node [blue] at (12,-0.95) {\scriptsize 4};
\node [blue] at (12.5,-2) {\scriptsize 5};
\node [blue] at (11.9,0.9) {\scriptsize 6};
\draw [blue](9.5,0) .. controls (9,-1) and (9.5,-2.5) .. (11,-2.5) .. controls (12.5,-2.5) and (13,-1) .. (12.5,0);
\draw [blue](v3) -- (v4) -- (v5) -- (v3) -- (v6);
\draw [blue](9.5,0) .. controls (10.5,-4) and (13.1,-1.6) .. (11,1.5);

\node [fill, circle, inner sep=1.5pt] (v3) at (8,-4) {};
\node [fill, circle, inner sep=1.5pt] (v6) at (9.5,-5.5) {};
\node [fill, circle, inner sep=1.5pt] (v5) at (8,-7) {};
\node [fill, circle, inner sep=1.5pt] (v4) at (6.5,-5.5) {};
\node [blue] at (6.95,-4.7) {\scriptsize 3};
\node [blue] at (7.3,-6) {\scriptsize 2};
\node [blue] at (7.75,-5.5) {\scriptsize 4};
\node [blue] at (9,-6.45) {\scriptsize 5};
\node [blue] at (9.5,-7.5) {\scriptsize 1};
\node [blue] at (8.9,-4.6) {\scriptsize 6};
\draw [blue](6.5,-5.5) .. controls (6,-6.5) and (6.5,-8) .. (8,-8) .. controls (9.5,-8) and (10,-6.5) .. (9.5,-5.5);
\draw [blue](v3) -- (v4) -- (v5) -- (v3) -- (v6);
\draw [blue](6.5,-5.5) .. controls (7.5,-9.5) and (10.1,-7.1) .. (8,-4);

\node [fill, circle, inner sep=1.5pt] (v8) at (-2.5,1.5) {};
\node [fill, circle, inner sep=1.5pt] (v10) at (-1,0) {};
\node [fill, circle, inner sep=1.5pt] (v9) at (-2.5,-1.5) {};
\node [fill, circle, inner sep=1.5pt] (v7) at (-4,0) {};
\node [blue] at (-1,0.6) {\scriptsize 1};
\node [blue] at (-3.75,-1.5) {\scriptsize 2};
\node [blue] at (-3.4,0.9) {\scriptsize 3};
\node [blue] at (-3,-1) {\scriptsize 4};
\node [blue] at (-1.6,-0.9) {\scriptsize 5};
\node [blue] at (-2.25,0.25) {\scriptsize 6};
\draw [blue](v7) -- (v8) -- (v9) -- (v10);
\draw [blue](-4,0) .. controls (-4.4,-1) and (-4,-2.4) .. (-2.5,-2.4) .. controls (-1,-2.4) and (-0.6,-1) .. (-1,0);
\draw [blue](-2.5,1.5) .. controls (-4.5,-2) and (-2,-3.5) .. (-1,0);
\draw [blue](-2.5,1.5) .. controls (-5.5,1.5) and (-5,-2.75) .. (-2.5,-2.75) .. controls (-1.1,-2.75) and (-0.45,-1.95) .. (-0.45,-1) .. controls (-0.45,0.5) and (-1.5,1.5) .. (-2.5,-1.5);

\node [fill, circle, inner sep=1.5pt] (v8) at (0.75,-4) {};
\node [fill, circle, inner sep=1.5pt] (v10) at (2.25,-5.5) {};
\node [fill, circle, inner sep=1.5pt] (v9) at (0.75,-7) {};
\node [fill, circle, inner sep=1.5pt] (v7) at (-0.75,-5.5) {};
\node [blue] at (1.7,-4.7) {\scriptsize 6};
\node [blue] at (-0.5,-7) {\scriptsize 1};
\node [blue] at (-0.15,-4.6) {\scriptsize 3};
\node [blue] at (0.25,-6.5) {\scriptsize 2};
\node [blue] at (1.65,-6.4) {\scriptsize 5};
\node [blue] at (1,-5.5) {\scriptsize 4};
\draw [blue](v7) -- (v8) -- (v9) -- (v10);
\draw [blue](-0.75,-5.5) .. controls (-1.15,-6.5) and (-0.75,-7.9) .. (0.75,-7.9) .. controls (2.25,-7.9) and (2.65,-6.5) .. (2.25,-5.5);
\draw [blue](0.75,-4) node (v11) {} .. controls (-1.25,-7.5) and (1.25,-9) .. (2.25,-5.5);
\draw [blue](v11);
\draw [blue](0.75,-4) -- (2.25,-5.5);

\draw [->, thick](5.25,-2.5) -- (6.75,-4);
\draw [->, thick](3.75,-2.5) -- (2.25,-4);
\draw [->, thick](-0.75,-4) -- (-1.75,-3);
\draw [->, thick](9.25,-4) -- (10.25,-3);
\node at (2.25,-3) {$(1, 2)$};
\node at (6.75,-3) {$(1, 5)$};
\node at (-2.25,-3.65) {$(1\ 2\ 4\ 6)$};
\node at (10.75,-3.65) {$(1\ 5\ 4\ 3)$};
\draw [squigarrow, thick](8.5,0) -- (7,0);
\draw [squigarrow, thick](0,0) -- (1.5,0);
\node at (0.75,0.3) {$h_5^{-1}$};
\node at (7.75,0.3) {$h_2$};
\end{tikzpicture}
\vspace{-2cm}
    \caption{The representation paths of $h_2$ and $h_5^{-1}$.}
    \label{fig:pA_4_sph}
\end{figure}
The following path is their concatenation:
\begin{align*}
    \gamma: (\tri, \ell) \xrightarrow{1,5} (\tri', \ell') \xrightarrow{(1\ 5\ 4\ 3)} h_2^{-1}(\tri, \ell) \xrightarrow{1,2} (\tri'', \ell'') \xrightarrow{(1\ 2\ 4\ 6)} \phi^{-1}(\tri, \ell).
\end{align*}


We can compute the coordinates of the pA pair $L^\pm_\phi$ as follows:
\begin{enumerate}
    \item Compute the domains of linearity of the PL map $\mu_\gamma: \bR^I \to \bR^I$ and the corresponding presentation matrices.
    \item Compute the eigenvectors of these matrices and find the ones which are contained in the corresponding domains of linearity. (Indeed, \cref{t:pA_SS_NS} (3) guarantees that there exist precisely two such eigenvectors.)
\end{enumerate}
The point $L^+_\phi$ (resp. $L^-_\phi$) corresponds to an eigenvector whose eigenvalue is lager (resp. smaller) than 1.
In this case, there are 16 domains of linearity of $\mu_\gamma$.
By computing the eigenvectors of the corresponding presentation matrices, one obtains the coordinates of $L^\pm_\phi$ as \footnote{In this case, the attracting/repelling points $L_\phi^\pm$ are antipodal to each other although it is a very special situation.}
\[ \bsfx^{(\tri,\ell)}(L_\phi^\pm) = \pm\left(
\frac{-1+\sqrt{5}}{2}, -1, 1, \frac{1-\sqrt{5}}{2}, 1, -1
\right). \]\CORRECTED


\cref{t:pA_SS_NS} tells us that the signs at every orbit in $\eML(F) \setminus \bR_{\geq 0} L_\phi^-$ converge to the stable sign $\boldsymbol{\epsilon}_{\gamma}^\stab:=\boldsymbol{\epsilon}_{\gamma}(L^+_\phi) = (-,+,+,-)$\CORRECTED. For example, 
the convergent behavior of signs at the orbit of the measured geodesic lamination $l^\pm_\tri \in \cC^\pm_\tri \subset \eML(F)$ such that $\bsfx^{(\tri, \ell)}(l^\pm_\tri) = (\pm 1, \dots, \pm 1)$
is shown in \cref{tab:signs_4_sph}.

\begin{table}[h]
    \centering
    \caption{Orbit of signs for $\gamma$.\CORRECTED}
    \vspace{-8pt}
    \begin{tabular}{c||c|c}
    \hline
    $n$ & $\boldsymbol{\epsilon}_\gamma(\phi^n(l^+_\tri))$ & $\boldsymbol{\epsilon}_\gamma(\phi^n(l^-_\tri))$ \\
    \hline
    $1$ & $(+,+,+,+)$ & $(-,-,-,-)$\\
    $2$ & $(+,+,+,-)$ & $(-,+,-,-)$\\
    $3$ & $(-,+,+,-)$ & $(-,+,+,-)$\\
    $4$ & $(-,+,+,-)$ & $(-,+,+,-)$\\
    $5$ & $(-,+,+,-)$ & $(-,+,+,-)$\\
    $\vdots$ & $\vdots$ & $\vdots$
    \end{tabular}
    \label{tab:signs_4_sph}
\end{table}

\end{ex}

\section{Topological and algebraic entropies}\label{subsec:entropy}
In this section, we compare the topological entropy of a mapping class and the algebraic entropies of its natural actions on certain cluster varieties. 
First of all, let us recall definitions and fundamental results on these notions of entropies.

\subsection{Topological entropy}
Given an open cover $\cU$ of a compact topological space $X$, 
let $N(\cU)$ denote the minimal cardinality of its finite subcover.
For open covers $\cU_1, \dots, \cU_n$ of $X$, we define their common refinement by
\[
\bigvee_{i=1}^n \cU_i := \{ U_1 \cap \cdots \cap U_n \mid U_i \in \cU_i,\  i=1,\dots,n\}.
\]
For a continuous map $f:X \to X$ and an open cover $\cU$ of $X$, we define another open cover by
\[
f^{-1}(\cU) := \{ f^{-1}(U) \mid U \in \cU \}.
\]
\begin{defi}[{\cite{AKM65}}]
The topological entropy of a continuous map $f:X \to X$ with respect to an open cover $\cU$ of $X$ is defined to be
\[ \cE^{\mathrm{top}}(f, \cU) := \lim_{n\to \infty} \log N\bigg( \bigvee_{i=0}^{n-1} f^{-i}(\cU)\bigg). \]
Then the topological entropy of $f$ is defined to be
\[ \cE^{\mathrm{top}}_f := \sup_{\cU} \cE^{\rm top} (f, \cU), \]
where $\cU$ runs over the all open covers of $X$.
\end{defi}

For a mapping class $\phi \in MC(\Sigma)$, its topological entropy is the infimum of the topological entropies of its representatives:
\[ \cE^{\mathrm{top}}_\phi := \inf_{f \in \phi}\, \cE^{\mathrm{top}}_f. \]

\begin{thm}\label{thm:top_ent_Dehn_0}
Let $\Sigma$ be a marked surface and let $C \subset \Sigma \setminus P$
be an essential simple closed curve.
Then we have
\[ \cE^{\mathrm{top}}_{T_C} = 0. \]
\end{thm}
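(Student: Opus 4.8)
The statement to prove is that a Dehn twist $T_C$ along an essential simple closed curve has zero topological entropy. The plan is to invoke the Nielsen--Thurston classification (\cref{t:NTclassification}) together with the standard fact that the topological entropy of a mapping class equals the logarithm of the stretch factor of its pseudo-Anosov part, which in particular vanishes for periodic and reducible classes that are not pseudo-Anosov. First I would observe that a Dehn twist $T_C$ along an essential simple closed curve $C$ is \emph{reducible}: by its very definition, $T_C$ fixes the isotopy class of $C$, which is a multicurve in the sense of \cref{subsec:foliation} (neither peripheral nor contractible, as $C$ is essential). Hence $T_C$ is not pseudo-Anosov by the second assertion of \cref{t:NTclassification}.

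The core of the argument is then to explain why a reducible non-pseudo-Anosov mapping class has vanishing topological entropy. The clean route is to use Thurston's theory of the canonical reduction system: cutting $\Sigma$ along the reducing curves (here one may take a maximal collection of curves fixed by $T_C$, or simply $C$ itself together with the annular neighborhood $\cN(C)$), the mapping class $T_C$ restricts to each complementary piece as either a periodic map or a pseudo-Anosov map, and the topological entropy of $T_C$ equals the maximum of the entropies of these pieces. For the Dehn twist, the only nontrivial behavior is concentrated on the annular neighborhood $\cN(C)$, where $T_C$ acts as the generator of the mapping class group of the annulus; on every other complementary piece it restricts to the identity. Since the restriction to the annulus is periodic (indeed has no pseudo-Anosov part) and the identity on the remaining pieces, each piece contributes zero entropy.

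Concretely, I would cite the standard result (for instance \cite{FLP} or the computation via the Bestvina--Handel train-track algorithm) that $\cE^{\mathrm{top}}_\phi = \log\lambda(\phi)$, where $\lambda(\phi)$ is the largest stretch factor among the pseudo-Anosov components of the Nielsen--Thurston decomposition of $\phi$, with the convention $\lambda = 1$ when there is no such component. Since $T_C$ has no pseudo-Anosov component, we conclude $\cE^{\mathrm{top}}_{T_C} = \log 1 = 0$.

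\textbf{Main obstacle.} The main subtlety is not the topological content but the bookkeeping around the notion of ``essential'' in the presence of marked points and boundary: one must ensure that $C \subset \Sigma^\circ$ being essential guarantees $T_C$ is genuinely reducible rather than accidentally isotopic to the identity (which would make the entropy trivially zero anyway) or forcing a pseudo-Anosov piece. The cleanest way to sidestep any delicate case analysis is to note that reducibility only requires exhibiting a single invariant multicurve, namely $C$ itself, so the obstacle reduces to confirming that an essential simple closed curve is never peripheral or contractible in $\Sigma^\circ$ — which is exactly the definition of essential — and then quoting the entropy formula. Thus the genuinely hard part is merely verifying we may apply the additivity/maximum principle for entropy across the Nielsen--Thurston pieces in the marked-surface setting, which I would handle by appeal to the literature rather than reproving it here.
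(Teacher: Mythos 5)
Your argument is correct in outline, but it takes a genuinely different route from the paper's. You reduce the statement to the Nielsen--Thurston decomposition plus the general formula $\cE^{\mathrm{top}}_\phi=\log\lambda(\phi)$, where $\lambda(\phi)$ is the largest dilatation among the pseudo-Anosov pieces (with the convention $\lambda=1$ if there are none), and then observe that $T_C$ is reducible with no pseudo-Anosov piece. The paper avoids that machinery entirely: it fixes an auxiliary Riemannian metric, writes down an explicit smooth representative of $T_C$ supported on a tubular annulus $\cN(C)\cong \bR/2\pi\bZ\times[0,1]$, namely $(\theta,t)\mapsto(\theta+b(t),t)$, notes that its differential is everywhere unipotent, and applies Ito's derivative bound $\cE^{\mathrm{top}}_f\le n\log\sup_x\|df_x\|$ from \cite{Ito70} to kill the entropy. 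The trade-off is this: your route is shorter \emph{if} the ``maximum over the pseudo-Anosov pieces'' formula for reducible classes is citable as a black box, but that is precisely the folklore statement the authors say they could not locate a reference for --- \cite{FLP} establishes the entropy formula only in the pseudo-Anosov case (\cref{thm:top_entropy}) --- and the $\le$ direction of that formula for a twist piece is proved in the literature exactly by constructing the standard annular twist representative and checking it has vanishing entropy, i.e.\ by doing what the paper does. So your appeal to the literature is not wrong, but it quietly outsources the entire content of the theorem, whereas the paper's computation is self-contained and in this case no longer than the reduction. Two smaller points: the restriction of $T_C$ to the annular piece is not ``periodic'' rel boundary (it has infinite order there); the correct statement is that twisting along reduction curves contributes no entropy, which is again the nontrivial point. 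And since the paper works with general marked surfaces here (not just punctured ones), leaning on \cref{t:NTclassification} as stated would require the doubling or puncture-filling bookkeeping you allude to, which the direct construction sidesteps.
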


\begin{proof}
Although this theorem seems to be well-known, we give an easy proof here since we could not find a reference. 
First, recall the following theorem giving the estimate of the topological entropy from above:

\begin{thm}[\cite{Ito70}]\label{thm:top_ent_above}
Let $M$ be a compact $n$-dimensional Riemannian manifold and $f:M \to M$ be a $C^1$-diffeomorphism.
Then,
\[
\cE^{\mathrm{top}}_f \leq n \log \sup_{x \in M} \big\| df_x \big\|.
\]
Here $\|\cdot \|$ denotes the operator norm with respect to the norms on the tangent spaces $T_x M$ and $T_{f(x)}M$ given by the Riemannian metric.
\end{thm}
In order to use this, we fix an auxiliary Riemannian metric on $\Sigma$.
Take a tubular neighborhood $\cN(C)$ of $C$ and an isometry
\[ \alpha: A \to \cN(C) \subset \Sigma \]
with an annulus $A := \bR/2\pi\bZ \times [0, 1]$ equipped with the standard Euclidean metric.
Let $T: A \to A$ be an orientation-preserving diffeomorphism
defined by $(\theta,t) \mapsto (\theta + b(t), t)$, where $b:[0,1] \to [0,2\pi]$ is a smooth monotonically increasing\CORRECTED\ function such that $b'(0)=b'(1)=0$. Then its tangent map is of the form
\[
dT_{(\theta,t)} =
\begin{pmatrix}
1 & *\\
0 & 1
\end{pmatrix}
\]
for any $(\theta,t) \in A$.
Then the identity-extension of the diffeomorphism $\alpha \circ T \circ \alpha^{-1}$ on $\cN(C)$ represents the Dehn twist $T_C$ along $C$.
Then from \cref{thm:top_ent_above} we get
\begin{align*}
    \cE^{\mathrm{top}}_{T_C} 
    &\leq \cE^{\mathrm{top}}_{\alpha \circ T \circ \alpha^{-1}}\\
    &\leq 2 \log \sup_{x \in \Sigma} \big\| d(\alpha \circ T \circ \alpha^{-1})_x \big\|\\
    &= 2 \log \sup_{x \in \cN(C)} \big\| d(\alpha \circ T \circ \alpha^{-1})_x \big\|\\
    &= 2 \log \sup_{y \in A} \big\| dT_y \big\| = 0
\end{align*}
as desired.
\end{proof}
For the topological entropy of a pA mapping class, we have the following classical result:

\begin{thm}[Thurston, \cite{FLP}]\label{thm:top_entropy}
Let $\Sigma$ be a punctured surface and $\phi \in MC(\Sigma)$ a pA mapping class with pA stretch factor $\lambda >1$. 
Then its topological entropy is given by
\[ \cE^{\mathrm{top}}_\phi = \log \lambda. \]
\end{thm}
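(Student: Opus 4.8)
The plan is to establish the two bounds $\cE^{\mathrm{top}}_\phi \le \log\lambda_\phi$ and $\cE^{\mathrm{top}}_\phi \ge \log\lambda_\phi$ separately, keeping in mind that $\cE^{\mathrm{top}}_\phi = \inf_{f \in \phi} \cE^{\mathrm{top}}_f$ is an infimum over representatives: the upper bound only requires exhibiting \emph{one} good representative, while the lower bound must be shown for \emph{every} representative. For the upper bound I would use the affine pseudo-Anosov representative $f$ determined by the pA pair $([\cF_+,\mu_+],[\cF_-,\mu_-])$. The two transverse foliations endow $\Sigma$ with a singular flat (half-translation) structure in which $\cF_+$ and $\cF_-$ are the horizontal and vertical foliations, and $f$ is affine with local form $(x,y) \mapsto (\lambda_\phi x, \lambda_\phi^{-1} y)$ away from the finitely many singular points. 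Ito's estimate (\cref{thm:top_ent_above}) applied in this metric only yields $\cE^{\mathrm{top}}_f \le 2\log\lambda_\phi$, since it ignores the contraction in the stable direction; the sharp bound comes instead from directly counting $(n,\varepsilon)$-separated sets in the Bowen metric. Because the unstable foliation is one-dimensional and $f^n$ expands unstable lengths by exactly $\lambda_\phi^n$ while contracting stable lengths, a maximal $(n,\varepsilon)$-separated set has cardinality $O(\lambda_\phi^n)$, whence $\cE^{\mathrm{top}}_f \le \log\lambda_\phi$ and therefore $\cE^{\mathrm{top}}_\phi \le \log\lambda_\phi$.

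For the lower bound I would compare the topological entropy with the exponential growth rate of geometric intersection numbers, which is a homotopy invariant. Fix two filling simple closed curves $\alpha, \beta$. Since the geometric intersection number is invariant under isotopy, $i(g^n\alpha, \beta) = i(\phi^n\alpha, \beta)$ for every homeomorphism $g$ isotopic to $\phi$, so it suffices to (i) bound $\cE^{\mathrm{top}}_g$ below by $\limsup_n \frac1n \log i(g^n\alpha,\beta)$, and (ii) evaluate this limit. For (ii) I would invoke the linear dynamics established in \cref{sec:pA_SS_NS}: regarding $\alpha$ as a point of $\cU_\Sigma(\bR^\trop)\cong\MF(\Sigma)$ and using the NS dynamics (\cref{thm:NS on U}) together with the sign stability of \cref{t:pA_SS_NS}, the orbit $\phi^n\alpha$ eventually lies in the stable cone $\cC_\gamma^\stab$, on which $\phi$ acts by a matrix with Perron--Frobenius eigenvalue $\lambda_\phi$ and eigenray $\bR_{>0}L_\phi^+ = \bR_{>0}[\cF_+]$. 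The intersection number $i(-,\beta) = \cI_\beta(-)$ is the piecewise-linear, degree-one-homogeneous function from the proof of \cref{thm:geodesic lamination}, linear on $\cC_\gamma^\stab$, and $\cI_\beta([\cF_+]) = i(\cF_+,\beta) > 0$ because $\beta$ is filling; hence $i(\phi^n\alpha,\beta) \asymp \lambda_\phi^n$ and the growth rate equals $\log\lambda_\phi$. Combining (i) and (ii) over all representatives gives $\cE^{\mathrm{top}}_\phi \ge \log\lambda_\phi$.

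The main obstacle is step (i): proving that $\cE^{\mathrm{top}}_g \ge \limsup_n \frac1n \log i(g^n\alpha,\beta)$ for an \emph{arbitrary} representative $g$, which is only assumed continuous. The idea is to convert the $i(g^n\alpha,\beta)$ transverse intersection points into points that are separated by a fixed finite generating open cover adapted to $\beta$, so that the number of distinguishable $n$-orbits grows at least like the intersection number; this is the technical core of the Fathi--Shub argument in \cite{FLP} and must be carried out with care near the singular set. A secondary point that needs attention is the passage from the a priori inequality $i(\phi^n\alpha,\beta) \le C\lambda_\phi^n$ (immediate from homogeneity of $\cI_\beta$) to the matching lower bound $i(\phi^n\alpha,\beta) \ge c\lambda_\phi^n$, for which I would choose $\alpha$ with nonzero $L_\phi^+$-component --- automatic for any rational multicurve, since $L_\phi^-$ is irrational (arational) --- guaranteeing that the Perron--Frobenius direction is genuinely excited. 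Applying the lower bound to $g = f$ and combining with the upper bound also shows that the affine representative realizes the minimum, $\cE^{\mathrm{top}}_f = \log\lambda_\phi$.
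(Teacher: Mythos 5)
The paper does not actually prove this statement: it is quoted as a classical theorem of Thurston with a pointer to \cite{FLP}, so there is no internal proof to compare against. Judged on its own, your sketch follows the standard route of \cite{FLP} (Expos\'es 9--10) and is essentially sound. You correctly identify the asymmetry forced by the infimum in the definition of $\cE^{\mathrm{top}}_\phi$ (one good representative for the upper bound, all representatives for the lower bound), you correctly observe that \cref{thm:top_ent_above} is too crude here and replace it by a separated-set count in the singular flat metric of the pA pair, and your lower bound via growth of geometric intersection numbers is the Fathi--Shub/Manning argument. Using the paper's own machinery (\cref{thm:NS on U}, \cref{t:pA_SS_NS}, and the PL function $\cI_\beta$) to evaluate $\limsup_n \tfrac1n \log i(\phi^n\alpha,\beta) = \log\lambda_\phi$ is a nice touch and is not circular, since those results are established independently of the present theorem; classically one would instead read this off the Perron--Frobenius theory of the train-track transition matrix.

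Two points deserve tightening. First, you openly defer the genuinely hard step, namely $\cE^{\mathrm{top}}_g \ge \limsup_n \tfrac1n \log i(g^n\alpha,\beta)$ for an arbitrary homeomorphism $g$ in the isotopy class; that is acceptable when re-deriving a cited classical theorem, but as written your argument is a scaffold around the reference rather than a self-contained proof. Second, the justification ``choose $\alpha$ with nonzero $L_\phi^+$-component, automatic since $L_\phi^-$ is arational'' conflates two things: not being proportional to $L_\phi^-$ does not by itself guarantee a nonzero coefficient on the Perron--Frobenius eigenvector in a general spectral decomposition. The correct statement is that the limit $\phi^n\alpha/\lambda_\phi^n \to c(\alpha)\,[\cF_+,\mu_+]$ holds with $c(\alpha)=i(\alpha,[\cF_-,\mu_-])$, which is positive precisely because $\cF_-$ is arational and $\alpha$ is an essential simple closed curve; alternatively, positivity of $c(\alpha)$ follows from the Perron--Frobenius structure of the linear action on the invariant cone $\cC_\gamma^\stab$ once the orbit has entered it, since the eigenray $\bR_{>0}L_\phi^+$ lies in the interior of that cone. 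With these two repairs the argument is the standard proof.
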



\subsection{Algebraic entropy}\label{sec:alg_entropy}
Let us consider the Fock--Goncharov's moduli spaces $\A_{SL_2,\Sigma}$ and $\X_{PGL_2,\Sigma}$ over $\bC$ \cite{FG03}. They parametrize $SL_2(\bC)$- and $PGL_2(\bC)$-local systems on $\Sigma$ with certain decoration data at punctures (flat sections of some associated bundles), and hence give two extensions of the $SL_2(\bC)$-character variety. See \cite{FG03} for details. 
The mapping class group $MC(\Sigma)$ naturally acts on these moduli spaces by the pull-back of the local systems and the decoration data. We are going to compute the \emph{algebraic entropy} \cite{BV99} of this action. 

For a rational function $f(u_1,\dots,u_N)$ over $\bC$ on $N$ variables, write it as $f(u)=g(u)/h(u)$ for two polynomials $g$ and $h$ without common factors. Then the \emph{degree} of $f$, denoted by $\deg f$, is defined to be the maximum of the degrees of the constituent polynomials $g$ and $h$. 
For a homomorphism $\varphi^*:\bC(u_1,\dots,u_N) \to \bC(u_1,\dots,u_N)$ between the field of rational functions on $N$ variables, let $\varphi_i:=\varphi^*(u_i)$ for $i=1,\dots,N$. Since $\bC(u_1,\dots,u_N)$ is the field of rational functions on the algebraic torus $(\bC^\ast)^N$ equipped with coordinate functions $u_1,\dots,u_N$, the homomorphism $\varphi^*$ can be regarded as the pull-back action induced by a rational map $\varphi:(\bC^\ast)^N \to (\bC^\ast)^N$ between algebraic tori. 
We define the degree of $\varphi$ to be the maximum of the degrees  $\deg\varphi_1,\dots,\deg\varphi_N$ of components and denote it by $\deg (\varphi)$. 
\begin{dfn}[\cite{BV99}]\label{d:entropy}
The \emph{algebraic entropy} $\cE^{\mathrm{alg}}_\varphi$ of a rational map $\varphi:(\bC^\ast)^N \to (\bC^\ast)^N$ is defined to be
\begin{align*}
    \cE^{\mathrm{alg}}_\varphi:= \limsup_{n \to \infty} \frac{1}{n}\log(\deg(\varphi^n)).
\end{align*}
\end{dfn}
The algebraic entropy is invariant under a conjugation by a birational map $f$: $\cE^{\mathrm{alg}}_{f^{-1}\varphi f}=\cE^{\mathrm{alg}}_\varphi$.

Each of the moduli spaces $\A_{SL_2,\Sigma}$ and $\X_{PGL_2,\Sigma}$ has a natural \emph{cluster structure}. For example, $\X_{PGL_2,\Sigma}$ admits a birational map $\mathbf{X}_\tri: \X_{PGL_2,\Sigma} \to (\bC^\ast)^\tri$ (\emph{cluster charts}) given by the cross ratio parameters associated with ideal triangulations $\tri$ of $\Sigma$, transition maps among them being expressed as the so-called \emph{cluster transformations}. Indeed, the shear coordinate system $\boldsymbol{\sfx}_\tri$ on $\eML(F)$ (\cref{thm:lam_shear}) is the tropical analogue of $\mathbf{X}_\tri$. The moduli space $\A_{SL_2,\Sigma}$ has a similar structure. 

Given a mapping class $\phi \in MC(\Sigma)$ and an ideal triangulation $\tri$, its coordinate expression $\phi_\tri^x:=\mathbf{X}_\tri^{-1} \circ \phi \circ \mathbf{X}_\tri^{-1}:(\bC^\ast)^\tri \to (\bC^\ast)^\tri$ is a birational map. Its algebraic entropy $\cE_\phi^x:=\cE^{\mathrm{alg}}_{\phi^x_{\tri}}$ is independent of the choice of $\tri$. Let $\cE_\phi^a$ denote the algebraic entropy similarly defined for the action on $\A_{SL_2,\Sigma}$. 

The existence of these cluster structures tells us that these moduli spaces $\cA_{SL_2, \Sigma}$ and $\cX_{PGL_2, \Sigma}$ are birationally equibalent to the cluster $\cA$- and $\cX$-varieties of a mutation class $\bs_\Sigma$ associated with $\Sigma$, respectively.
Also, we can identify $\eML(F)$ and $\cX_{\bs_\Sigma}(\bR^\trop)$ \cite[Appendix A]{AIK24}. 



Moreover, there is a suitable identification between a subgraph of (labeled) exchange graphs of $\bs_\Sigma$ and the graph of (labeled) ideal triangulations.
Thus, we can regard mapping classes as mutation loops of $\bs_\Sigma$, and apply the results on algebraic entropy of \cite{IK19}.
First, we prove the palindromicity property for the mutation loops coming from mapping classes.

\begin{defi}\label{def:palindromicity}
We say that an invertible matrix $M$ satisfies the \emph{palindromicity property} if the characteristic polynomials of $M$ and $(M^{-1})^{\tr}$ are the same up to the overall sign.
\end{defi}

We conjectured that the presentation matrix $E_\gamma^{\bep_\gamma(w)}$ of a representation path $\gamma$ of a mutation loop at any point $w \in \cX_\bs(\bR^\trop)$ satisfies the palindromicity property \cite[Conjecture 3.13]{IK19}.
It still remains open and there is no counter-example so far.
The following is a partial affirmative result of this conjecture:

\begin{prop}\label{prop:spec_same_surf}
Let $\Sigma$ be a punctured surface.
Then the presentation matrix $E_\gamma^{\bep_\gamma(L_0)}$ of any representation path $\gamma$ of any mapping class $\phi$ at $L_0 \in \ML_0(F)$ satisfies the palindromicity property if the sign $\bep_\gamma(L_0)$ is strict.
\end{prop}

\begin{proof}
Let $\MF(\Sigma)$ denote the space of measured foliations on $\Sigma$. 
There is a homeomorphism $\ML_0(F) \xrightarrow{\sim} \MF(\Sigma)$
by collapsing the complements of a geodesic lamination (see, for example, \cite{CB}).
The space $\dMF(\Sigma) := \MF(\Sigma) \times \bR^P$ has global coordinates $\bsfa^\tri: \dMF(\Sigma) \to \bR^\tri$ associated with ideal triangulations $\tri$ (\cite{PP93}) such that the coordinate changes  $\bsfa^{\tri'} \circ (\bsfa^{\tri})^{-1}$ are given by tropical cluster $\cA$-transformations. The composite
\begin{align*}
    p: \dMF(\Sigma) \to \MF(\Sigma) \cong \ML_0(F) \subset \eML(F)
\end{align*}
is called the \emph{ensemble map}, where the first map is the projection to the first factor. It satisfies the relation
\begin{align*}
    p^\ast \sfx_\kappa^\tri = \sum_{\alpha \in \tri} \ve_{\kappa\alpha}^\tri \sfa_\alpha^\tri
\end{align*}
for all $\kappa \in \tri$. 
Let $\gamma:(\tri, \ell) \to \phi^{-1}(\tri, \ell)$ be a representation path of a mapping class $\phi$. 
For $\cF \in \dMF(\Sigma)$, we have
\begin{align*}
    (d\phi)_{\cF}^{(\tri,\ell)} = ((E_\gamma^{\bep_\gamma(L_0)})^{-1})^\tr
\end{align*}
by \cite[Remark 3.11]{IK19}, whenever $L_0:=p(\cF)$ has the strict sign.
The product structure $\dMF(\Sigma) = \MF(\Sigma) \times \bR^P$ is $MC(\Sigma)$-invariant, where the action on $\bR^P$ is via $\sigma_\Sigma$ (\cref{prop:lamination_split_action}). This gives the block decomposition
\begin{align*}
    (d\phi)_{\cF}^{(\tri,\ell)}= \begin{pmatrix}
        (d \phi^u)^{(\tri,\ell)}_{L_0} & 0\\
        0 & \sigma_\Sigma(\phi)
    \end{pmatrix}.
\end{align*}
By comparing with the decomposition \eqref{eq:block-decomp} of $E_\gamma^{\bep_\gamma(L_0)}$, we obtain the desired statement.
\end{proof}

Thanks to this proposition, we can compute the algebraic entropies $\cE^a_\phi$ and $\cE^x_\phi$ of a sign-stable mapping class $\phi$ by applying \cite[Corollary 1.2]{IK19}.

\subsection{Comparison of topological and algebraic entropies}

We obtain the following comparisons:

\begin{cor}\label{cor:cluster_Dehn_entropy}
For a simple closed curve $C$ on a marked surface $\Sigma$ satisfying the condition \eqref{eq:condition_Dehn}, we have
\[ \cE^a_{T_C} = \cE^x_{T_C} = \cE^{\mathrm{top}}_{T_C} = 0. \]
\end{cor}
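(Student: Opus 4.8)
The plan is to combine three ingredients already established in the excerpt: the topological entropy vanishing of a Dehn twist, the sign stability of the representation path $\gamma_C$, and the general entropy formula for sign-stable mutation loops. First I would invoke \cref{thm:top_ent_Dehn_0}, which directly yields $\cE^{\mathrm{top}}_{T_C}=0$; the simple closed curve $C$ is essential by virtue of the condition \eqref{eq:condition_Dehn}, which guarantees a marked point on each side of $C$ and hence non-contractibility and non-peripherality. This disposes of the topological entropy immediately.

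For the two algebraic entropies, the strategy is to apply \cref{thm:alg_entropy}. By \cref{lem:Dehn_SS}, the representation path $\gamma_C: (\tri_C,\ell_C) \xrightarrow{1} (\tri',\ell') \xrightarrow{(1\ 2)} T_C^{-1}(\tri_C,\ell_C)$ is sign-stable on the set $\Omega^{(1,2)}_{(\tri_C,\ell_C)}$, and since $\Omega^{\mathrm{can}}_{(\tri_C,\ell_C)} \subset \Omega^{(1,2)}_{(\tri_C,\ell_C)}$, the path is in particular sign-stable on $\Omega^{\mathrm{can}}_{(\tri_C,\ell_C)}$, with cluster stretch factor $\lambda_{T_C}=1$. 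The remaining obligation is to verify the hypothesis of \cref{thm:alg_entropy}, namely that \cref{p:spec_same} (the spectral duality conjecture) holds for the stable sign of $\gamma_C$ on $\Omega^{\mathrm{can}}_{(\tri_C,\ell_C)}$. This is where I would apply \cref{prop:spec_same_surf}: the limit point $L_{T_C}$, identified with the curve $C$ regarded as a rational lamination, lies in $\cU_\Sigma(\bR^\trop)$, so any differentiable point in a neighborhood on which the stable sign is achieved is a differentiable point in $\cU_\Sigma(\bR^\trop)$, and \cref{prop:spec_same_surf} confirms \cref{p:spec_same} precisely for such points.

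With the hypothesis verified, \cref{thm:alg_entropy} gives $\cE^a_{T_C}=\cE^x_{T_C}=\log\lambda_{T_C}=\log 1 = 0$. One technical point to be careful about is the presence of frozen indices when $\Sigma$ has boundary: \cref{thm:alg_entropy} is stated for seed patterns without frozen indices, but the excerpt remarks (just after the theorem) that the assumption on frozen indices can be dropped, so I would cite that remark to cover the general marked surface case. Combining the three computations yields the desired chain $\cE^a_{T_C}=\cE^x_{T_C}=\cE^{\mathrm{top}}_{T_C}=0$.

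The main obstacle I anticipate is not conceptual but bookkeeping: ensuring that the block-decomposition structure of the stable presentation matrix \eqref{eq:E_matrix_Dehn_twist} is correctly matched with the splitting used in \cref{prop:spec_same_surf}, and confirming that the stable sign computed via the Kronecker-type behavior of \cref{ex:Kronecker} with $\ell=2$ is indeed strict so that the differentiability hypotheses of \cref{thm:alg_entropy} and \cref{prop:spec_same_surf} are genuinely met at the relevant points. Since $\lambda_{T_C}=1$ rather than $>1$, I would also double-check that \cref{thm:alg_entropy} permits $\lambda_\phi=1$ (it only requires $\lambda_\phi\geq 1$), so no positivity of the stretch factor is needed.
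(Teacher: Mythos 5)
Your argument coincides with the paper's own proof for the case of a punctured surface: both invoke \cref{thm:top_ent_Dehn_0} for the topological entropy, \cref{lem:Dehn_SS} for the sign stability of $\gamma_C$ on $\Omega^{(1,2)}_{(\tri_C,\ell_C)} \supset \Omega^{\mathrm{can}}_{(\tri_C,\ell_C)}$ with $\lambda_{T_C}=1$, and then \cref{thm:alg_entropy} together with \cref{prop:spec_same_surf} applied at the point $L_{T_C}\in\cU_\Sigma(\bR^\trop)$, where the stable sign $(+)$ is attained. The one place where you diverge, and where your argument is thinner than the paper's, is the general marked surface (non-empty boundary) case: you propose to cover it by citing the remark that the frozen-index hypothesis of \cref{thm:alg_entropy} can be dropped, but that still leaves you needing the spectral duality \cref{p:spec_same} for the stable sign, and \cref{prop:spec_same_surf} is stated and proved only for punctured surfaces. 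The paper avoids this entirely by giving a second, unconditional argument for the general case: the upper bound $\log R_\phi$ of the algebraic entropies from \cite[Theorem 1.1]{IK19} is computed directly from the block-triangular stable presentation matrix \eqref{eq:E_matrix_Dehn_twist} and seen to vanish, which forces $\cE^a_{T_C}=\cE^x_{T_C}=0$ without appealing to the spectral duality conjecture at all. You should either adopt that route for the boundary case or extend \cref{prop:spec_same_surf} to marked surfaces with boundary before relying on it there.
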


\begin{proof}
For simplicity, first consider the case of a punctured surface. 
Consider the representation path $\gamma_C$ studied in \cref{lem:Dehn_SS}. Since its stable sign $\boldsymbol{\epsilon}_\gamma^\stab=(+)$ is the sign at the point $L_{T_C} \in \ML_0(F)$, we get $\cE^a_{T_C} = \cE^x_{T_C}=0$ by 
\cite[Corollary 1.2]{IK19} and \cref{prop:spec_same_surf}. Combining with \cref{thm:top_ent_Dehn_0}, we get the desired assertion. 

As another proof applicable to the general case, we can verify the vanishing of the upper bound $\log R_\phi$ appeared in \cite[Theorem 1.1]{IK19} by a direct computation using \eqref{eq:E_matrix_Dehn_twist}. 
\end{proof}

\begin{cor}\label{cor:pA_entropy}
For a punctured surface $\Sigma$ and a generic pA mapping class $\phi \in MC(\Sigma)$, we have
\[ \cE^a_{\phi} = \cE^x_{\phi} = \cE^{\mathrm{top}}_{\phi} = \log \lambda_\phi. \]
Here $\lambda_\phi >1$ is the cluster stretch factor (=pA stretch factor). 
\end{cor}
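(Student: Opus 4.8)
The plan is to assemble the three equalities by combining the main theorem \cref{t:pA_SS_NS} (together with its reflection-enhanced version \cref{prop:pA_NS_SS}), the algebraic-entropy formula \cref{thm:alg_entropy}, the partial confirmation of the spectral duality conjecture \cref{prop:spec_same_surf}, and Thurston's classical computation of the topological entropy \cref{thm:top_entropy}. The key point is that $\phi$ being a pA mutation loop means, by definition, that its underlying mapping class $\oline{\phi}$ is pseudo-Anosov; by \eqref{eq:mutation loop_power} some positive power $\phi^{r_0}=(\oline{\phi}^{r_0},1)$ is an honest mapping class, and the action of any mutation loop on $\cU_\Sigma(\bR^\trop)$ agrees with that of its underlying mapping class. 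This lets me transport the stretch factor, its role as a cluster stretch factor, and the topological entropy freely between $\phi$ and $\oline\phi$.

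First I would establish the topological side. By \cref{thm:top_entropy}, $\cE^{\mathrm{top}}_{\oline\phi}=\log\lambda_{\oline\phi}$, where $\lambda_{\oline\phi}>1$ is the stretch factor of the pseudo-Anosov $\oline\phi$. Since the topological entropy is a property of the underlying mapping class, this is exactly $\cE^{\mathrm{top}}_\phi=\log\lambda_\phi$, where by \cref{prop:pA_NS_SS} the cluster stretch factor of $\phi$ coincides with the stretch factor $\lambda_\phi$ of $\oline\phi$.

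Next I would handle the algebraic entropies. By \cref{prop:pA_NS_SS} (equivalently \cref{t:pA_SS_NS} for the pure case), the pA mutation loop $\phi$ is uniformly sign-stable, and in particular it admits a representation path $\gamma$ starting from a vertex $v_0$ which is sign-stable on $\Omega^{\mathrm{can}}_{(v_0)}$; its cluster stretch factor is $\lambda_\phi>1$. To invoke \cref{thm:alg_entropy} I must verify that \cref{p:spec_same} holds true for the stable sign of $\gamma$ on $\Omega^{\mathrm{can}}_{(v_0)}$. Here I would appeal to \cref{prop:spec_same_surf}: the stable sign is realized at a differentiable point of $\mu_\gamma$ lying in $\cU_\Sigma(\bR^\trop)$ — indeed the attracting point $L_\phi^+$ is $\cX$-filling and lies in $\cU_\Sigma(\bR^\trop)$ by \cref{p:shear_non-vanishing}(1), so its sign is strict and agrees with the stable sign $\boldsymbol\epsilon^\stab_{\gamma,\Omega^{\mathrm{can}}_{(v_0)}}$. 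Since $\bs_\Sigma$ for a punctured surface has no frozen indices (as recorded at the start of \cref{sec:pA_SS_NS}), \cref{thm:alg_entropy} applies verbatim and yields $\cE^a_\phi=\cE^x_\phi=\log\lambda_\phi$.

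The main obstacle, and the step requiring the most care, is the verification that the hypotheses of \cref{thm:alg_entropy} are genuinely met — specifically that the stable sign of $\gamma$ on $\Omega^{\mathrm{can}}_{(v_0)}$ is the sign of a point in $\cU_\Sigma(\bR^\trop)$ at which \cref{prop:spec_same_surf} is applicable. One subtlety is that $\Omega^{\mathrm{can}}_{(v_0)}\cap\cU_\Sigma^\uf(\bR^\trop)$ may be empty, so the stable sign is not literally attained on $\cU_\Sigma(\bR^\trop)$ inside $\Omega^{\mathrm{can}}_{(v_0)}$; I would circumvent this by noting that the stable sign equals $\boldsymbol\epsilon_\gamma(L_\phi^+)$ and that $L_\phi^+\in\cU_\Sigma(\bR^\trop)$ is a differentiable point of $\mu_\gamma$ with this very sign, so \cref{prop:spec_same_surf} supplies the palindromicity of the characteristic polynomial of $E^{\boldsymbol\epsilon}_\gamma$ directly. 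Once this bridge is in place, combining the two displays gives $\cE^a_\phi=\cE^x_\phi=\cE^{\mathrm{top}}_\phi=\log\lambda_\phi$, completing the proof.
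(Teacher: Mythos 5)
Your proposal is correct and follows essentially the same route as the paper: the paper's proof is a one-line combination of \cref{t:pA_SS_NS}, \cref{thm:alg_entropy} and \cref{prop:spec_same_surf}, with precisely the observation you single out as the key step — that the attracting point $L_\phi^+$, which realizes the stable sign of any representation path, lies in $\cU_\Sigma(\bR^\trop)$, so that \cref{prop:spec_same_surf} validates \cref{p:spec_same} for that sign. Your additional care about $\Omega^{\mathrm{can}}_{(v_0)}\cap\cU_\Sigma(\bR^\trop)$ possibly being empty, and about passing through \cref{prop:pA_NS_SS} for mutation loops with reflections, only makes explicit what the paper leaves implicit.
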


\begin{proof}
This is a direct consequence of \cref{t:pA_SS_NS}, 
\cite[Corollary 1.2]{IK19} and \cref{prop:spec_same_surf}. Indeed, just note that the representative of the attracting point $L_\phi^+$ giving the stable sign for any representation path is contained in $\ML_0(F)$. 
\end{proof}

\section{Proofs of lemmas in \cref{sec:pA_SS_NS}}

\subsection{Proof of \cref{lem:NS on X for pure}}\label{subsec:proof_1}
Let $D\Sigma$ be the closed surface obtained as the double of $\Sigma$. It is obtained by gluing two copies of the bordered surface $\Sigma^\circ$, which is obtained from $\Sigma$ by removing the small open disks $D_p$ for all $p \in P$,
with opposite orientations along the corresponding boundary components.
The surface $D\Sigma$ is equipped with an orientation-reversing involution $\iota$ which interchanges the two copies of $\Sigma^\circ$.
A hyperbolic structure $F$ as in \cref{sec:geodesic_laminations} is canonically extended to an $\iota$-invariant hyperbolic structure $DF$ on $D\Sigma$.

For each sign $\sigma = (\sigma_p)_{p \in P} \in \{+,-\}^P$,
let us consider the subset
\[ \eML^{(\sigma)}(F)
:= \{ (G,\mu) \in \eML(F) \mid \sigma_p \sigma_G(p) \geq 0 \mbox{ for } p \in P \}, \]
which is preserved by the action of $PMC(\Sigma)$.

Fix $\sigma \in \{+,-\}^P$.
Let us consider the embedding 
\begin{align*}
    D: \eML^{(\sigma)}(F) \hookrightarrow \ML_0(DF)
\end{align*}
defined as follows. For each measured geodesic lamination in $\eML^{(\sigma)}(F)$, take its $\iota$-invariant extension to the double $DF$. Then on the both sides of the boundary component $\partial_p$ for $p \in P$ in $DF$, spiralling leaves come in pair related by the involution $\iota$. We first straighten these leaves so that they hit the boundary component $\partial_p$ transversely, and then glue each $\iota$-related pair of leaves together to get a measured geodesic lamination on the double $DF$. By \cref{l:determined by signature}, the resulting map $D$ is injective. 
This construction is $PMC(\Sigma)$-equivariant, where the action on $\ML_0(DF)$ is through the group embedding 
\begin{align*}
    D:MC(\Sigma) \xrightarrow{\sim} MC(D\Sigma)^\iota \subset MC(D\Sigma).
\end{align*}
Note that for each pA mapping class $\phi \in PMC(\Sigma)$, the mapping class $D\phi$ is \lq\lq pure" in the sense of \cite{Iva}.  
Let $L_\phi^+, L_\phi^- \in \ML_0(F)$ be some representatives of the attracting and repelling points of the NS dynamics of $\phi \in PMC(\Sigma)$ on $\bP \ML_0(F)$, respectively.
Then \cite[Theorem A.1]{Iva} (and a more detailed description given in \cite[Lemma A.4]{Iva}) tells us that
\begin{align*}
[D(\phi^n(L))] = [(D\phi)^n(DL)] \xrightarrow{n \to \infty} [D(L_\phi^+)]
\end{align*}
for any $L \in \eML^{(\sigma)}(F) \setminus \bR_{\geq 0}L_\phi^-$
Since $D$ is an embedding and $\sigma$ was arbitrary, the proof is completed.

\subsection{Proof of \cref{lem:asympt to U}}\label{subsec:proof_2}
We have
\begin{align*}
    E^n(w)= \bigg(A^n(u) + \sum_{k=0}^{n-1} A^{n-k-1}BD^k(v),\  D^n(v)\bigg)
\end{align*}
for $w=(u,v) \in \bR^N=\bR^{N'} \times \bR^h$ and $n \geq 0$. Fix any $\varepsilon>0$. 

For $u \in \cC \cap \bR^{N'}$, let $\mathbf{a}(u):= \lim_{n \to \infty} \| A^n(u)/\lambda^n\|$. From the assumption (c) it converges, and there exists an integer $n(u)$ such that 
\begin{align*}
    \left\| \frac{A^n(u)}{\lambda^n} - \mathbf{a}(u)u_+\right\| <\varepsilon
\end{align*}
for all $n \geq n(u)$. 
For any $w=(u,v) \in \cC \subset \bR^N=\bR^{N'} \times \bR^h$, 
we have
\begin{align*}
    &\left\|\frac{E^n(w)}{\lambda^n}-\left( \mathbf{a}(u)+\sum_{k=0}^{n-1}
    \frac{\mathbf{a}(BD^k(v))}{\lambda^{k+1}}\right) u_+ \right\|  \\
    &\leq \left\| \frac{A^n(u)}{\lambda^n} - \mathbf{a}(u)u_+\right\| + \sum_{k=0}^{n-1}\frac{1}{\lambda^{k+1}}\left\| \frac{A^{n-k-1}(BD^k(v))}{\lambda^{n-k-1}}-\mathbf{a}(BD^k(v)) u_+ \right\| + \left\| \frac{D^n(v)}{\lambda^n}\right\|. 
\end{align*}
The first term is $\varepsilon$-small for all $n \geq n(u)$. The third term is $\varepsilon$-small for all $n \geq n_1(v)$, where $n_1(v)$ is chosen so that $\lambda^{-n_1(v)}\max_{k=0,\dots,s-1}\|D^k(v)\| <\varepsilon$. 
In order to estimate the second term, let $n_2(v):= \max_{k=0,\dots,s-1} n(BD^k(v))$ and 
\begin{align*}
    M:= \max_{m=0,\dots,n_2(v)-1}\max_{k=0,\dots,s-1} \left\| \frac{A^{m}(BD^k(v))}{\lambda^{m}}-\mathbf{a}(BD^k(v)) u_+ \right\|.
\end{align*}
For any $n > n_2(v)$, we have
\begin{align*}
    &\sum_{k=0}^{n-1}\frac{1}{\lambda^{k+1}}\left\| \frac{A^{n-k-1}(BD^k(v))}{\lambda^{n-k-1}}-\mathbf{a}(BD^k(v)) u_+ \right\| \\
    &\leq \sum_{k=0}^{n-n_2(v)-1}\frac{1}{\lambda^{k+1}}\varepsilon + \sum_{k=n-n_2(v)}^{n-1}\frac{1}{\lambda^{k+1}}M 
    \leq \frac{1}{\lambda-1}\varepsilon + \frac{M}{\lambda^n} \sum_{k=1}^{n_2(v)}\lambda^{k-1}.
\end{align*}
It is bounded from above by $(1/(\lambda-1)+1)\varepsilon$ if moreover $n \geq n_3(v)$, where $n_3(v)$ is chosen so that $\lambda^{-n_3(v)}M\sum_{k=1}^{n_2(v)}\lambda^{k-1} < \varepsilon$. Summarizing, we get
\begin{align*}
    &\left\|\frac{E^n(w)}{\lambda^n}-\left( \mathbf{a}(u)+\sum_{k=0}^{n-1}
    \frac{\mathbf{a}(BD^k(v))}{\lambda^{k+1}}\right) u_+ \right\|  < \left( 3 + \frac{1}{\lambda-1}\right)\varepsilon
\end{align*}
for all $n > \max\{n(u),n_1(v),n_2(v),n_3(v)\}$. 

Observe that the infinite sum $\sum_{k=0}^\infty \mathbf{a}(BD^k(v))/\lambda^{k+1}$ converges since $D$ has finite order, and thus we get
\begin{align*}
    \left\|\frac{E^n(w)}{\lambda^n}-\left( \mathbf{a}(u)+\sum_{k=0}^{\infty}
    \frac{\mathbf{a}(BD^k(v))}{\lambda^{k+1}}\right) u_+ \right\| <\left( 4 + \frac{1}{\lambda-1}\right)\varepsilon
\end{align*}
for sufficiently large $n$. Hence $[E^n(w)] \to [u_+]$ as desired.
\appendix
\if0
\section{Correspondence with the theory of cluster algebras}


Main reference: \cite[Section 1 and Appendix A]{AIK}

\subsection{Concepts around cluster algebras}\label{subsec:seeds}

Fix a finite set $I$ of indices and its subset $I_\uf \subset I$, and fields $\cF_A$, $\cF_X$ isomorphic to the field of rational functions on $N := |I|$ variables. 
A \emph{(labeled) seed} in $(\cF_A, \cF_X)$ is a triple $(Q, \bdA, \bdX)$, where
\begin{itemize}
    \item $Q$ is a quiver having no loops nor 2-cycles, whose vertices being parametrized by the set $I$.
    \item $\bdA=(A_i)_{i \in I}$ (resp. $\bdX=(X_i)_{i \in I}$) is a tuple of algebraically independent elements (called the \emph{cluster $\X$- (resp. $\cA$-)variables}) in $\cF_X$ (resp. $\cF_A$).    
\end{itemize}
The data of quiver $Q$ is encoded in the skew-symmetric matrix $B = (b_{ij})_{i,j\in I}$ (called the \emph{exchange matrix}), defined by 
\begin{align*}
    b_{ij} := \#\{\text{the arrows from $i$ to $j$ in $Q$}\} - \#\{\text{the arrows from $j$ to $i$ in $Q$}\}.
\end{align*}
We will identify them when no confusion can occur.

For an index $k \in I$, the \emph{mutation} directed to $k$ produces a new seed $\mu_k(Q, \bdA, \bdX) = (Q', \bdA', \bdX')$ by the formula
\begin{align}
    \ve'_{ij} &:= 
    \begin{cases}
    -\ve_{ij} & \mbox{if $i=k$ or $j=k$}, \\
    \ve_{ij} + [\ve_{ik}]_+ [\ve_{kj}]_+ - [-\ve_{ik}]_+ [-\ve_{kj}]_+ & \mbox{otherwise},
    \end{cases} \label{eq:matrix mutation}\\
    A'_i &:=
    \begin{cases}
    A_k^{-1}\big( \prod_{i \in I} A_i^{[b_{ik}]_+} + \prod_{i \in I} A_i^{[-b_{ik}]_+} \big) & \text{if } i=k,\\
    A_i & \text{otherwise}.
    \end{cases}\\
    X'_i&:= 
    \begin{cases}
    X_k^{-1} & \text{if } i=k,\\
    X_i\,(1 + X_k^{-\mathrm{sgn}(\ve_{ik})})^{-\ve_{ik}} & \text{otherwise}.
  \end{cases} \label{eq:X-transf}
\end{align}
A permutation $\sigma \in \mathfrak{S}_{I}$ produces a new seed $\sigma(Q, \bdA, \bdX) = (Q', \bdA', \bdX')$ by the rule
\begin{align}\label{eq:seed_permutation}
    \ve'_{ij}:=\ve_{\sigma^{-1}(i),\sigma^{-1}(j)}, \quad A'_i:=A_{\sigma^{-1}(i)}, \quad 
    X'_i:=X_{\sigma^{-1}(i)}.
\end{align}
We say that two seeds in $(\cF_A, \cF_X)$ are \emph{mutation-equivalent} if they are transformed to each other by a finite sequence of mutations and permutations. The equivalence class is usually called a \emph{mutation class}. 

\begin{dfn}
The relations among the seeds in a given mutation class $\bs$ can be encoded in the \emph{(labeled) exchange graph} $\bExch_\bs$. It is a graph with vertices $v$ corresponding to the seeds $\bs^{(v)}$ in $\bs$, together with labeled edges of the following two types:
\begin{itemize}
    \item (horizontal edge) edges of the form $v \overbar{k} v'$ whenever the seeds $\bs^{(v)}$ and $\bs^{(v')}$ are related by the mutation $\mu_k$ for $k \in I$;
    \item (vertical edge) edges of the form $v \overbarnear{\sigma} v'$ whenever the seeds $\bs^{(v)}$ and $\bs^{(v')}$ are related by the transposition $\sigma=(j\ k)$ for $(j,k) \in I \times I$.
\end{itemize}
An edge path in $\bExch_\bs$ corresponds to the usual notion of a \emph{mutation sequence}. 
When no confusion can occur, we simply denote a vertex of the labeled exchange graph by $v \in \bExch_\bs$ instead of $v \in V(\bExch_\bs)$. 
\end{dfn}

We will write $\bs^{(v)} = (Q^{(v)}, \bdA^{(v)}, \bdX^{(v)})$ for $v \in \bExch_\bs$.


Given a mutation class $\bs$ of seeds, let $\mathrm{Quiv}_\bs$ denote the mutation class of quivers underlying $\bs$. Then we have a map 
\begin{align*}
    Q^\bullet: V(\bExch_\bs) \to \mathrm{Quiv}_\bs, \quad v \mapsto Q^{(v)}.
\end{align*}

\begin{dfn}[\cite{FG09}]\label{def:CMG}
The \emph{cluster modular group} $\Gamma_\bs \subset \mathrm{Aut}(\bExch_\bs)$ consists of graph automorphisms $\phi$ which preserve the fibers of the map $Q^\bullet$ and the labels on the edges (in particular, the horizontal/vertical properties). An element of the cluster modular group is called a \emph{mutation loop}. 
\end{dfn}

Let us record the argument used in the proof as follows:

\begin{prop}\label{prop:mutation_loop_NS_SS}
Let $\bs$ be any seed pattern, and $\phi \in \Gamma_{\bs}$ a mutation loop. 
\begin{itemize}
    \item[(i)] 
    Assume that there exists an integer $r >0$ such that the mutation loop $\phi^r$ has NS dynamics on $\bP \X_\bs(\bR^\trop)$, and the attracting/repelling points $[L_\phi^\pm]$ are $\X$-filling and fixed by $\phi$.
    Then $\phi$ is uniformly sign-stable.
    \item[(ii)] Conversely, if $\phi$ is uniformly sign-stable, 
    then any positive power $\phi^r$ never have a fixed point $[L] \in \bP\X_{\bs}(\bQ^\trop)$ which is not $\X$-filling. In particular, the mutation loop $\phi$ is cluster-pA.
\end{itemize}
\end{prop}
The statement (ii) is slightly stronger than \cref{prop:uniform_SS_cluster_pA}, since a point contained in a face of $|\mathfrak{F}_\bs^+|$ is not $\X$-filling but the converse is not true. For instance, the simple closed curve $C$ in \cref{fig:Dehn_genus_2} is not $\X$-filling but does not belong to any face of the Fock--Goncharov cluster complex.

\subsection{Tropical cluster $\X$-variety and the space of measured geodesic laminations}
\fi

\section{Signed mapping classes}\label{sec:Appendix}

\subsection{Cluster modular groups}\label{sec:cluster_modular_group}
First, we confirm the definition of the cluster modular group in our context.

Let $\bs$ be a mutation class with index set $\widetilde{I} := I \sqcup I_\f$.
Here the seed mutations are allowed only at the indices in $I$.
The labeled exchange graph $\bExch_\bs$ is the graph whose vertices are the labeled seeds in $\bs$, and two vertices are connected by an edge with label $\pi$ if the corresponding seeds are related by a seed mutation $\mu_\pi$ for $\pi \in I$ or a relabeling by a transposition $\pi = (i\ j) \in \fS_{I} \times \fS_{I_\f}$.

Let $\mathrm{Mat}_\bs$ denote the mutation class of matrices underlying $\bs$.
Then, we have a map $B^\bullet: V(\bExch_\bs) \to \mathrm{Mat}_\bs$ which sends a vertex $v \in \bExch_\bs$ to the exchange matrix $B^{(v)}$ of the seed associated to $v$.
Then, the \emph{cluster modular group} $\Gamma_\bs$ is the group consisting of graph automorphisms $\phi$ of $\bExch_\bs$ which preserve the fibers of $B^\bullet$ and the labels on the edges.
An element of $\Gamma_\bs$ is called a \emph{mutation loop}.

Some other basics of cluster algebras are summarized in \cite[Appendix A.1]{IK24_lam}. We also refer the reader to \cite[Appendix A]{AIK24} for the mutation class $\bs_\Sigma$ associated to a marked surface $\Sigma$.
In this paper, $\X_\bs$ stands for the cluster $\X$-variety without frozen coordinates. 
We set $\cU_\bs := \im p \subset \cX_\bs$.
Then, the following subsequence of \cite[(A.6)]{IK24_lam} for $\bA = \bR$ and $\bs = \bs_\Sigma$ is equivalent to \eqref{eq:lam_exact_seq}:
\begin{align*}
     1 \to \cU_\bs(\bR^T) \to \cX_\bs(\bR^T) \xrightarrow{\theta_\bs} H_{\cX}(\bR) \to 1.
\end{align*}


\subsection{Signed mapping classes}
As we mentioned in \cref{sec:alg_entropy}, the graph of (labeled) ideal triangulations is identified with the subgraph of (labeled) exchange graph of the mutation class $\bs_\Sigma$.
For this reason, the mapping class group $MC(\Sigma)$ is a proper subgroup of the cluster modular group $\Gamma_\Sigma:=\Gamma_{\bs_\Sigma}$ in general.

Except for some cases, we can recover the entire of the cluster modular group by extending the mapping class group as follows:
\begin{align*}
    MC^\pm(\Sigma) := \begin{cases}
    MC(\Sigma) & \mbox{if $\Sigma$ is a punctured surface with exactly one puncture}, \\    
    MC(\Sigma) \ltimes (\bZ/2)^P & \mbox{otherwise}.
    \end{cases}
\end{align*}
Here,
in the second case, 
the group structure is given by
\begin{align}\label{eq:sgn_mc_mult}
    (\phi_1,\varsigma_1)\cdot(\phi_2,\varsigma_2):=(\phi_1 \cdot \phi_2,\ (\phi_2^*\varsigma_1) \cdot \varsigma_2)
\end{align}
for $(\phi_i,\varsigma_i) \in MC(\Sigma)\times (\bZ/2)^P$, where $(\phi^* \varsigma)_p := \varsigma_{\phi(p)}$ and $\varsigma \cdot \varsigma' = (\varsigma_p \varsigma'_p)_p$.
This group $MC^\pm(\Sigma)$ is called \emph{signed mapping class group}.

Also, Fomin--Shapiro--Thurston generalize the ideal triangulations, called \emph{tagged triangulations}, to fill the exchange graph of $\bs_\Sigma$.
Actually, they give a bijection between the graph of tagged triangulations and the exchange graph \cite[Theorem 7.11]{FST}.
The signed mapping class group acts on the graph of tagged triangulations as follows:

A tagged triangulation is a suitable equivalence class of a pair of an ideal triangulation and a sign $\tau = (\tau_p)_{p \in P}$ for punctures \cite[Section 8]{BS15}.
For a tagged triangulation $(\tri, \tau)$ and a signed mapping class $(\phi, \varsigma)$, we define
\begin{align*}
    (\phi, \varsigma)^{-1} (\tri, \tau) := (\phi^{-1}(\tri),\ \varsigma \cdot \phi^* \tau).
\end{align*}

Therefore, we get the group homomorhpsim $MC^\pm(\Sigma) \to \Gamma_\Sigma$.

\begin{thm}
\label{t:cluster modular_surface case}
Let $\Sigma$ be a marked surface.
If $\Sigma$ is not a sphere with 4 punctures, then the homomorphism
$MC^\pm(\Sigma) \to \Gamma_\Sigma$
is an isomorphism.
Otherwise, the image is a subgroup of index $\geq 2$.
\end{thm}

\begin{proof}
Except for the following cases, the statement is proved in \cite{BS15}:
\begin{enumerate}[(a)]
    \item a sphere with 4 punctures;
    \item a sphere with 5 punctures;
    \item a once-punctued disk with 2 or 4 marked points on its boundary;
    \item a twice-punctued disk with 2 marked points on its boundary;
    \item an unpunctured disk with 4 marked points on its boundary;
    \item an annulus with once marked point on each boundary component;
    \item a once-punctured torus.
\end{enumerate}
In the case (a), there is a mutation loop which is not contained in $MC(\Sigma) \ltimes (\bZ/2)^h$ (see \cite[Graph 2]{Gu11}). 
Nevertheless, the injectivity of $MC^\pm(\Sigma) \to \Gamma_\Sigma$ still holds (\emph{e.g.}, \cite{ASS12}). The other cases are remedied as follows.

The case (b) was excluded only for the issue of existence of a non-degenerate potential of a quiver corresponding to an ideal triangulation, so it does not matter for our setting.
In the cases (c) and (d), we have a mutation loop which is not contained in $MC^\pm(\Sigma)$ when forgetting frozen indices.
However, in our setting, those are not mutation loops since they do not preserve the frozen part of the exchange matrix. 
In the cases (e) and (f), the injectivitiy of $MC^\pm(\Sigma) \to \Gamma_\Sigma$ does not hold when forgetting frozen indices. 
It is also not an issue when considering frozen indices, since the relevant mutation loops are distinguished by their action on frozen coordinates.

In the case (g), the quotient of the mapping class group by the \emph{hyperelliptic involution} is isomorphic to the automorphism group of the mapping class groupoid, which is nothing but the cluster modular group. See \cite[Chapter 4, Section 4 and Chapter 5, Section 1]{Penner}.
\end{proof}

The aim of this appendix is to extend our main result \cref{t:pA_SS_NS} for signed mapping classes:

\begin{thm}\label{t:sgn_pA_NS_SS}
Let $\Sigma$ be a punctured surface.
For a signed mapping class $(\phi, \varsigma) \in MC^\pm(\Sigma)$, the following conditions are equivalent:
\begin{enumerate}
    \item $\phi$ is generic pseudo-Anosov.
    \item $(\phi, \varsigma)$ is uniformly sign-stable.
    \item $(\phi, \varsigma)$ has NS dynamics on $\bP \eML(F)$ and the attracting and repelling points are $\cX$-filling.
\end{enumerate}
In this case, the cluster stretch factor of $(\phi, \varsigma)$ also coincides with the pA stretch factor of $\phi$.
\end{thm}

First, we summarize the arguments in \cref{sec:(1)<=>(2)}:

\begin{prop}\label{prop:mutation_loop_NS_SS}
Let $\phi \in MC(\Sigma)$ is a mapping class. 
\begin{itemize}
    \item[(i)] 
    Assume that there exists an integer $r >0$ such that the mapping class $\phi^r$ has NS dynamics on $\bP \eML(F)$, and the attracting/repelling points $[L_\phi^\pm]$ are $\X$-filling and fixed by $\phi$.
    Then $\phi$ is uniformly sign-stable.
    \item[(ii)] Conversely, if $\phi$ is uniformly sign-stable, 
    then any positive power $\phi^r$ never have a fixed point $[L] \in \bP\eML_\bQ(F)$ which is not $\X$-filling. In particular, the mapping class $\phi$ is cluster-pA.
\end{itemize}
\end{prop}
The statement (ii) is slightly stronger than \cref{prop:uniform_SS_cluster_pA}, since a point contained in a face of a cone of the fan $\mathfrak{F}_\Sigma^+$ is not $\X$-filling but the converse is not true. For instance, the simple closed curve $C$ in \cref{fig:Dehn_genus_2} is not $\X$-filling but does not belong to any face of a cone of the fan.

\begin{proof}[Proof of \cref{t:sgn_pA_NS_SS}]
(1) $\Longleftrightarrow$ (2): 
We can generalize \cref{prop:mutation_loop_NS_SS} for signed mapping classes since the action of $(\phi, \varsigma)$ on $\ML_0(F)$ is equal to the action of $\phi$, and there exist $s > 0$ such that $(\phi, \varsigma)^s = (\phi^s, \mathrm{id})$ by \eqref{eq:sgn_mc_mult}.
The equivalence is a direct consequence of this.

(1) $\Longleftrightarrow$ (3):
With the same notice, this equivalence can be proven by the same argument of \cref{sec:(1)<=>(3)}.
\end{proof}

Our results \cref{t:pA_SS_NS,t:sgn_pA_NS_SS} is proved by taking two steps $(1) \Longleftrightarrow (2)$ and $(1) \Longleftrightarrow (3)$.
Also note that under the condition $(1)$, the condition $(2)$ is equivalent to the following stronger condition:
\begin{itemize}
    \item[($2^\#$)] The signed mapping class $(\phi, \varsigma)$ is uniformly sign-stable and cluster stretch factor is greater than $1$.
\end{itemize}
We expect that there is a direct proof of $(2^\#) \Longleftrightarrow (3)$.
As a generalization, we conjecture the following:

\begin{conj}
Let $\bs$ be any mutation class and $\phi \in \Gamma_\bs$.
Then $\phi$ is uniformly sign-stable with cluster stretch factor greater than $1$ if and only if $\phi$ has NS dynamics on $\bP \cX_\bs(\bR^\trop)$ with $\cX$-filling attracting/repelling points.
\end{conj}
The stronger condition $(2^\#)$ excludes the following example
which is uniformly sign-stable by but it does not have NS dynamics.

\begin{ex}
Let us continue to deal the Dehn twist in \cref{ex:Dehn_path}.
As we see in \cref{ex:Dehn_SS}, this mapping class has a representation path which is sign-stable on the entire space $\eML(F)$.
Since $C$ is $\cX$-filling, the Dehn twist $T_C$ is uniformly sign-stable.
However, its cluster stretch factor is 1.
\end{ex}




\begin{thebibliography}{GHKK18}


\bibitem[AKM65]{AKM65}
R. L. Adler, A. G. Konheim,  M. H. McAndrew,
\emph{Topological entropy},
Trans. Amer. Math. Soc. \textbf{114} (1965), 309--319.

\bibitem[AB20]{AB}
D. G. L. Allegretti and T. Bridgeland, 
\emph{The monodromy of meromorphic projective structures}, Trans. Amer. Math. Soc. \textbf{373} (2020), 6321--6367

\bibitem[AIK24]{AIK24}
T. Asaka, T. Ishibashi and S. Kano,
\emph{Earthquake Theorem for Cluster Algebras of Finite Type},
Int. Math. Res. Not. IMRN \textbf{2024}, 7129--7159.

\bibitem[ASS12]{ASS12}
I. Assem, R. Schiffler and V. Shramchenko,
\emph{Cluster automorphisms},
Proc. London Math. Soc. \textbf{104} (2012) 1271--1302.

\bibitem[BB09]{BB}
R.~Benedetti, F.~Bonsante,
\emph{{$(2+1)$} {E}instein spacetimes of finite type},
Handbook of Teichm\"uller theory. Vol. II, 533--609, 
IRMA Lect. Math. Theor. Phys., 13, Eur. Math. Soc., Z\"urich, 2009. 

\bibitem[BH95]{BH95}
M. Bestvina and M. Handel,
\emph{Train-tracks for surface homeomorphisms},
Topology \textbf{34} (1995), 109–140.


\bibitem[BKS11]{BKS}
F.~Bonsante, K.~Krasnov and J-M.~Schlenker,
\emph{Multi-black holes and earthquakes on Riemann surfaces with boundaries},
Int. Math. Res. Not. (2011), 487--552. 

\bibitem[Bon98]{Bon}
F.~Bonahon,
\emph{Variations of the boundary geometry of $3$-dimensional hyperbolic convex cores}.
J. Differential Geom. \textbf{50} (1998), 1--24.

\bibitem[BS15]{BS15}
T. Bridgeland and I. Smith,
\emph{Quadratic differentials as stability conditions},
Publ. Math. Inst. HautesEtudes Sci. \textbf{121} (2015), 155--278.

\bibitem[BV99]{BV99}
M. P. Bellon and C.-M. Viallet,
\emph{Algebraic entropy},
Comm. Math. Phys. {\bf 204} (1999), 425--437.

\bibitem[CB]{CB}
A. J. Casson and S. A. Bleiler,
\emph{Automorphisms of surfaces after {N}ielsen and {T}hurston},
London Mathematical Society Student Texts, \textbf{9}. Cambridge University Press, Cambridge (1988), iv+105 pp. 




\bibitem[FG06a]{FG03} 
V. V. Fock and A. B. Goncharov, 
{\em Moduli spaces of local systems and higher Teichm\"uller theory},
Publ. Math. Inst. Hautes \'Etudes Sci., \textbf{103} (2006), 1--211.


\bibitem[FG07]{FG07}
V. V. Fock and A. B. Goncharov, 
{\em Dual Teichm\"uller and lamination spaces},
Handbook of Teichm\"uller theory,  Vol. I, 647-684; IRMA Lect. Math. Theor. Phys., 11, Eur. Math. Soc., Z\"urich, 2007. 


\bibitem[FG09]{FG09}
V. V. Fock and A. B. Goncharov, 
\emph{Cluster ensembles, quantization and the dilogarithm},
Ann. Sci. \'Ec. Norm. Sup\'er., \textbf{42} (2009), 865--930.

\bibitem[FG16]{FG16}
V. V. Fock and A. B. Goncharov, 
\emph{Cluster {P}oisson varieties at infinity},
Selecta Math. (N.S.) \textbf{22} (2016), 2569--2589.

\bibitem[FLP]{FLP}
A. Fathi, F. Laudenbach and V. Po\'enaru, \emph{Thurston’s work on surfaces},
Mathematical Notes, \textbf{48}, Princeton University Press, Princeton, NJ, 2012,
Translated from the 1979 French original by D. M. Kim and D. Margalit.

\bibitem[FM]{FM}
B.~Farb and D.~Margalit,
\emph{A primer on mapping class groups},
Princeton Mathematical Series, \textbf{49}, Princeton University Press, Princeton, NJ, 2012. xiv+472 pp. 

\bibitem[FST08]{FST}
S. Fomin, M. Shapiro and D. Thurston,
{\em Cluster algebras and triangulated surfaces. {I}. Cluster complexes},
Acta Math. \textbf{201} (2008), 83--146.


\bibitem[FZ02]{FZ-CA1}
S. Fomin and A. Zelevinsky,  
\emph{Cluster algebras. I. Foundations}, 
J. Amer. Math. Soc. \textbf{15} (2002), 497--529. 


\bibitem[GHKK18]{GHKK}
M. Gross, P. Hacking and S. Keel and M. Kontsevich,
\emph{Canonical bases for cluster algebras},
J. Amer. Math. Soc. \textbf{31} (2018), 497--608. 




\bibitem[Gu11]{Gu11}
W. Gu,
\emph{Graphs with non-unique decomposition and their associated surfaces},
arXiv:1112.1008.

\bibitem[Had11]{Hadari}
A. Hadari,
\emph{Algebraic entropy and the action of mapping class groups on character varieties},
Adv. Math. \textbf{226} (2011), 3282--3296. 


\bibitem[IK21]{IK19}
T. Ishibashi and S. Kano,
{\em Algebraic entropy of sign-stable mutation loops},
Geom. Dedicata \textbf{214}, 79--118 (2021).


\bibitem[IK20]{IK2}
T. Ishibashi and S. Kano,
{\em Sign stability of mapping classes on marked surfaces II: general case},
arXiv:2011.14320. 

\bibitem[IK24]{IK24_lam}
T. Ishibashi and S. Kano,
\emph{Unbounded $\mathfrak{sl}_3$-laminations and their shear coordinates},
to appear in Algebr. Geom. Topol.

\bibitem[Ish19]{Ish19}
T. Ishibashi,
{\em On a Nielsen--Thurston classification theory for cluster modular groups},
Annales de l'Institut Fourier, \textbf{69} (2019), 515--560. 


\bibitem[Ito70]{Ito70}
S. Ito,
\emph{An estimate from above for the entropy and the topological entropy of a $C\sp{1}$-diffeomorphism},
Proc. Japan Acad. \textbf{46} (1970), 226--230. 

\bibitem[Iva]{Iva}
N. V. Ivanov,
\emph{Subgroups of \Teich\ modular groups},
Translations of Mathematical Monographs, \textbf{115}, Amer. Math. Soc., Providence, RI, 1992.


\bibitem[Kan23]{Kan23}
S. Kano,
\emph{Train track combinatorics and cluster algebras},
preprint, arXiv:2303.03190.


\bibitem[Le16]{Le16}
I. Le,
 {\em Higher laminations and affine buildings},
Geom. Topol. \textbf{20} (2016), 1673-–1735. 






\bibitem[Pen]{Penner}
R. C. Penner, 
\emph{Decorated {T}eichm\"uller theory}, 
QGM Master Class Series, European Mathematical Society (EMS), Z\"urich, 2012.

\bibitem[Pen87]{Pen87}
R. C. Penner,
\emph{The decorated {T}eichm\"uller space of punctured surfaces},
Comm. Math. Phys. \textbf{113} (1987), 299--339. 


\bibitem[PH]{PH}
R. C. Penner and J. L. Harer, {\em Combinatorics of train tracks}, Annals of Mathematics Studies, \textbf{125}, Princeton University Press, Princeton, NJ, 1992.

\bibitem[PP87]{PP87}
A. Papadopoulos and R. C. Penner,
{\em A characterization of pseudo-Anosov foliations,}
Pacific J. Math., \textbf{130} (1987), 359--377.

\bibitem[PP93]{PP93}
A. Papadopoulos and R. C. Penner,
{\em The Weil--Petersson symplectic structure at Thurston's boundary},
Trans. Amer. Math. Soc. \textbf{335} (1993), 891--904. 


\bibitem[Th88]{Th88} W. Thurston, {\it On the geometry and dynamics of diffeomorphisms of surfaces}, Bull. Amer. Math. Soc. (N.S.) {\bf 19} (1988), 417--431.




\bibitem[Yur20]{Yurikusa}
T. Yurikusa,
\emph{Density of g-vector cones from triangulated surfaces},
Int. Math. Res. Not. IMRN, Vol. 2020, \textbf{21} (2020) 8081--8119.

\end{thebibliography}
\end{document}